\documentclass[12pt,a4paper]{amsart}

\usepackage[margin=2cm]{geometry}
\usepackage{graphicx}
\usepackage{amsmath}
\usepackage{amssymb}
\usepackage{amsthm}
\usepackage{amsfonts}
 \usepackage{mathtools}
 \usepackage{color}
\usepackage{url}
 
\usepackage{hyperref}
\newcommand{\RR}{\mathbb{R}}

\newcommand{\eps}{\varepsilon}

\newcommand{\dd}{\mathrm{d}}

\newcommand{\BB}{\mathcal{B}}
\newcommand{\TT}{\mathcal{T}}
\newcommand{\MM}{\mathcal{M}}

\newtheorem{theorem}{Theorem}[section]

\newtheorem{proposition}[theorem]{Proposition}
\newtheorem{lemma}[theorem]{Lemma}
\theoremstyle{definition}

\newtheorem{remark}[theorem]{Remark}
\numberwithin{equation}{section}

\title{Galerkin Approximation of the Fractional Sobolev Constant}

	\author[A.~Dima]{ANDREEA DIMA}
	\address[A.~Dima]{Simion Stoilow Institute of Mathematics of the Romanian Academy, 21 Calea Grivi\c{t}ei Street, 010702, Bucharest, Romania}
\email[]{andreeadima21\@@{}gmail.com,\ adima\@@{}imar.ro}
    
\author[L.~I.~Ignat]{Liviu I. Ignat}
\address[L.~I.~Ignat]{Institute of Mathematics ``Simion Stoilow'' of the Romanian Academy, 21 Calea Grivitei Street, 010702 Bucharest, Romania.
    \newline\indent
 National University of Science and Technology Politehnica Bucharest, 313 Splaiul Independen\c tei, 060042 Bucharest, Romania.    \newline\indent
 Academy of
Romanian Scientists, Ilfov Street, no. 3, Bucharest, Romania.}
\email[]{liviu.ignat\@@{}gmail.com}
\urladdr{https://www.imar.ro/~lignat}

        \subjclass[2020]{65N30, 46E35}
\keywords{Fractional Sobolev Inequalities, Approximation and Stability, Finite Element Method, Gagliardo-Nirenberg Interpolation Inequalities}
\begin{document} 
\begin{abstract}
We establish sharp estimates for the discrete optimal constant of the fractional Sobolev inequality in dimension $N\geq 1$, with fractional exponent $s\in (0,\min\{1,N/2\})$. The convergence rates that we establish take place for the Galerkin approximation with piecewise linear elements, when the computations are carried out in the unit ball, for which we employ a quasi-uniform and regular mesh.
\end{abstract}

\maketitle
\section{Introduction}
The determination and approximation of optimal constants in Sobolev-type
inequalities is a central problem in analysis and partial differential
equations. These constants govern embedding properties and solution
regularity, and play a fundamental role in nonlinear variational problems,
critical elliptic equations, and geometric flows. 

In the classical (local) setting, the sharp Sobolev inequality
\begin{equation*}
  \|D\phi\|_{L^p(\RR^N)} \geq \Lambda_{N,p}\|\phi\|_{L^{p^*}(\RR^N)},
  \qquad 1\leq p<N,\quad p^*=\frac{Np}{N-p},
\end{equation*}
and the structure of its optimizers are by now well understood, following
foundational works of Talenti, Aubin, and Lieb \cite{MR0463908,MR0448404,MR717827}.
Quantitative stability around the manifold of optimizers, i.e. bounds
on how much a near-minimizer must differ from an exact one, has been
an active line of research \cite{MR1124290,MR4484209}.

On the numerical side, the approximation of optimal Sobolev constants by
finite element methods 
has been studied   over the past two
decades. 
Early work \cite{glowinski} provided numerical evidence for
$|\gamma_h-\gamma|=\mathcal{O}(h^2)$ for the high-order Sobolev embedding
on bounded domains in $\RR^d$, $d\leq 3$,
$$\|\phi\|_{L^{\infty}(\Omega)}\leq \gamma \|\phi\|_{H^2(\Omega)\cap H^{1}_{0}(\Omega)},$$
where $\gamma$ is the optimal constant in the above inequality and $\gamma_h$ is its discrete counterpart. 
For the constant $\Lambda_{N,p}$
itself, \cite{pratelli} established the non-sharp two-sided bounds
$C^{-1}h^\lambda\leq\Lambda_{3,2,h}-\Lambda_{3,2}\leq Ch^{1/3}$.
Optimal convergence rates for the piecewise linear approximation of
$\Lambda_{N,p}$ in all dimensions and exponents were subsequently
obtained in \cite{ignat2025optimalconvergenceratesfinite}, relying on
refined stability estimates for Sobolev minimizers \cite{MR4484209}.


In parallel, fractional Sobolev spaces and their associated inequalities
have been extensively studied, including sharp constants, stability, and
improved inequalities (see, e.g., \cite{DiNezzaPalatucci2012,Frank2013,MR4623703,MR4741541}).
From the numerical perspective, finite element methods for fractional Laplacian problems are
now well developed, with convergence and regularity theory established under suitable assumptions \cite{acosta2017fractional}.
Despite these advances, the corresponding problem in the fractional setting remains largely unexplored. 

The goal of the present paper is to initiate a systematic study of this question.
More precisely, we investigate the piecewise linear finite element approximation of the fractional Sobolev constant
\begin{equation}
S_{N,s}=\inf_{u\in C_c^\infty(\RR^N)} \frac{[u]^2_{\dot{H}^s(\RR^N)}}{\|u\|^2_{L^{2_s^{*}}(\RR^N)}}, 
\end{equation}
for   $N\geq 1$ and $s\in \left(0,\min\{1,\frac{N}{2}\}\right)$, where $$2_s^*=\frac{2N}{N-2s}.$$
The infimum is not attained in
$C_c^\infty(\RR^N)$, but is achieved on   an ($N$+2)-dimensional manifold $\mathcal{M}$ \cite{MR717827,Frank2013} (see Section \ref{sec:preliminaries}). The non-compactness of
$\mathcal{M}$, and the associated bubbling phenomenon, is the primary
source of difficulty in both the continuous stability theory and the
discrete approximation problem.

Let us consider the unit ball $\BB\subset\RR^N$ for which we employ a quasi-uniform and regular mesh of characteristic size $h$. We denote by $V_h$ the corresponding finite-dimensional subspace of $H^1_0(\BB)$, consisting of piecewise linear finite element functions on $\BB$ that are continuous and vanish on the boundary of $\BB$. These functions are extended by zero outside of $\BB$. See Section \ref{sec:preliminaries} for a precise construction of $V_h$.
Since the fractional Sobolev inequality holds
on any bounded Lipschitz domain with the same sharp constant (Proposition
\ref{prop:Fractional-Sobolev-Inequality}), the discrete constant
\begin{equation}
S_{N,s,h}\coloneqq\min_{u_h\in V_h}
\frac{[u_h]^2_{\dot{H}^s(\RR^N)}}{\|u_h\|^2_{L^{2_s^*}(\RR^N)}}
\end{equation}
satisfies $S_{N,s,h}\geq S_{N,s}$, and our goal 
  is to provide sharp convergence rates of $S_{N,s,h}$ towards $S_{N,s}$. 

\begin{theorem}\label{thm:main}
Let $V_h$ be the space of piecewise linear finite elements, given by a quasi-uniform triangulation of the unit ball $\BB$. For any $s\in \left(0,\min\left\{1,\frac{N}{2}\right\}\right)$, there exist two positive constants $C_1$ and $C_2$ depending only on $N,s$ and the mesh characteristics, such that the following estimate holds for small enough $h$:
\[C_1 h^\alpha\leq S_{N,s,h}-S_{N,s}\leq C_2 h^\alpha,\]
where
\begin{equation}
\alpha=\alpha_{N,s}=\frac{2(2-s)(N-2s)}{N+4(1-s)}.
\end{equation}
\end{theorem}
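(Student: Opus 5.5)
Two matching estimates are needed: an upper bound from a good test function in $V_h$, and a lower bound from stability of the fractional Sobolev inequality. Both rest on the same balance between scale and mesh size.

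The bubble $U_\lambda(x)=\lambda^{(N-2s)/2}U(\lambda x)$, with $U(x)=(1+|x|^2)^{-(N-2s)/2}$, has tails of size $\lambda^{-(N-2s)/2}|x|^{-(N-2s)}$. This balance is what produces the exponent $\alpha$.

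\emph{Upper bound.} I take the optimizer concentrated at the origin with scale $\lambda$ large, $U_\lambda$, and proceed in three steps.
\begin{itemize}
\item[1.] Truncate $U_\lambda$ smoothly to vanish outside $\BB$, e.g.\ $\eta U_\lambda$ with a fixed cutoff $\eta$. The truncation raises the quotient by $O(\lambda^{-(N-2s)})$. This is the standard Brezis--Nirenberg type computation, adapted to the nonlocal seminorm by splitting the double integral into near and far regions.
\item[2.] Replace $\eta U_\lambda$ by its nodal interpolant $I_h(\eta U_\lambda)\in V_h$. The interpolation error in $\dot H^s$ satisfies
\[
[\,\eta U_\lambda - I_h(\eta U_\lambda)\,]_{\dot H^s}^2 \lesssim h^{2(2-s)}\,|\eta U_\lambda|_{H^2}^2 .
\]
By scaling, $|U_\lambda|_{H^2}^2\sim\lambda^{2(2-s)}$ is concentrated near the origin, although one must check the integrability $\int |D^2U_\lambda|^2$, which is fine when $N>4s-4$... in general I would use weighted estimates. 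This gives an error $\lesssim (\lambda h)^{2(2-s)}$.
\item[3.] The first-order error vanishes because $U_\lambda$ is critical. Hence the quotient error is quadratic in the perturbation, of order $(h\lambda)^{2(2-s)}$, and not its square root.
\end{itemize}
Minimizing $\lambda^{-(N-2s)}+(h\lambda)^{2(2-s)}$ over $\lambda$ is not quite $\alpha$. I expect the actual competition to be between
\[
\lambda^{-(N-2s)}\quad\text{and}\quad h^{2(2-s)}\lambda^{2(2-s)}\cdot\lambda^{-?}.
\]
The honest approach is to compute $\|D^2(\eta U_\lambda)\|$ with the mesh restricted, and choose $\lambda=h^{-\beta}$ so that both errors equal $h^\alpha$. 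Since $\alpha=\frac{2(2-s)(N-2s)}{N+4(1-s)}$, the balance $\lambda^{-(N-2s)}=(h\lambda)^{2(2-s)}\lambda^{-?}$ must hold with effective exponents whose sum is $N+4-4s$. This suggests the interpolation error term behaves like $h^{2(2-s)}\lambda^{4-2s-(N-2s)+N}$, up to log corrections which must be excluded. I would carefully compute the local interpolation error elementwise as $\sum_K h^{2(2-s)}|U_\lambda|^2_{H^2(K)}$, splitting $|x|<1/\lambda$ from $|x|>1/\lambda$.

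\emph{Lower bound.} Let $u_h$ be a discrete minimizer, normalized with $\|u_h\|_{L^{2_s^*}}=1$. I proceed in three steps.
\begin{itemize}
\item[1.] Apply the quantitative stability of the fractional Sobolev inequality (Chen--Frank--Weth):
\[
S_{N,s,h}-S_{N,s}\gtrsim \operatorname{dist}_{\dot H^s}(u_h,\MM)^2 .
\]
\item[2.] Let $cU_{\lambda,x_0}$ be the nearest bubble. Two cases arise.
\begin{itemize}
\item[(a)] If $\lambda$ is small relative to $h^{-\beta}$, then $u_h$ vanishes outside $\BB$ while the bubble has a tail there. This gives
\[
\operatorname{dist}^2\gtrsim [U_{\lambda,x_0}]^2_{\dot H^s(\text{exterior interaction})}\gtrsim\lambda^{-(N-2s)},
\]
using that $x_0$ must stay inside $\BB$ (otherwise the distance is of order one).
\item[(b)] If $\lambda$ is large, then $u_h$ is piecewise linear at scale $h$, so it cannot approximate a function with curvature $\lambda^2$ on a ball of radius $1/\lambda$. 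The distance from $V_h$ to $U_\lambda$ is bounded below by the interpolation-error order. I would prove this via a local inverse-type argument: on each element $K$ near $x_0$, the best affine approximation error of $U_\lambda$ in a local $H^s$ sense is $\gtrsim h^{2-s}|D^2U_\lambda|$ times a volume factor, and one sums these over elements.
\end{itemize}
\item[3.] Combining the two cases gives
\[
\operatorname{dist}^2\gtrsim\min_\lambda\bigl(\lambda^{-(N-2s)}+\text{interp}(\lambda,h)\bigr)\sim h^\alpha .
\]
\end{itemize}

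The main obstacle is this last lower bound on the distance from $V_h$ to bubbles, given the nonlocality of the $\dot H^s$ norm. Local element contributions do not simply add. I would try first to localize via the superadditivity $[v]_{\dot H^s(\RR^N)}^2\ge\sum_K [v]^2_{H^s(K)}$ and a scaling/compactness argument on the reference element: a nonaffine quadratic cannot be approximated by affine functions in $H^s(\hat K)$.
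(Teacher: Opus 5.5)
Your architecture matches the paper's. For the upper bound you take the nodal interpolant of a truncated bubble and use the easy direction of the Chen--Frank--Weth estimate, with truncation cost $\lambda^{-(N-2s)}$ and interpolation cost $(h\lambda)^{2(2-s)}$. Your smooth cutoff, in place of the paper's subtraction of the bubble's boundary value, is a harmless variant. For the lower bound you use stability, the nearest bubble, the exterior--exterior interaction and superadditivity over elements. Your reference-element compactness argument is a reasonable substitute for the paper's Poincar\'e inequality, interpolation inequality on cubes and Hessian covering lemma, in the regime where it applies.

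However, your claim that minimizing $\lambda^{-(N-2s)}+(h\lambda)^{2(2-s)}$ ``is not quite $\alpha$'' is an arithmetic slip. Equating the two terms gives $\lambda^{N+4-4s}=h^{-2(2-s)}$, i.e.\ $\lambda\sim h^{-\frac{2(2-s)}{N+4-4s}}$, and then both terms equal $h^{\frac{2(2-s)(N-2s)}{N+4(1-s)}}=h^{\alpha}$. There is no missing power of $\lambda$ and no logarithm, since $\int_{\RR^N}|D^2U|^2<\infty$ because $2(N-2s+2)>N$. The expression you guess, $h^{2(2-s)}\lambda^{4-2s-(N-2s)+N}=h^{2(2-s)}\lambda^{4}$, contradicts your own scaling $|U_\lambda|^2_{H^2}\sim\lambda^{2(2-s)}$ and would give the wrong exponent $\frac{2(2-s)(N-2s)}{N+4-2s}$. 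So keep your first computation. For $N\ge 4$ the $H^2$-based estimate (which needs $N\le 3$) must be replaced by elementwise $W^{2,\infty}$ interpolation bounds outside an $O(h)$-ball around the origin and a crude bound inside it. The passage to $\dot H^s$ then comes from interpolating the $L^2$ and $H^1$ errors, as in the paper.

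The genuine gap is in the lower bound: you skip the a priori control of the nearest bubble that occupies Steps I--III of the paper (there written $\Phi_{\lambda_h,c_h,X_h}$, with $c_h$ playing the role of your $1/\lambda$). Above all, your case (b) does not cover bubbles concentrating at or below the mesh scale. The reference-element compactness argument needs $U_\lambda$ to be close to its second-order Taylor polynomial on the element. That holds only on elements at distance $\gg h$ from $x_0$, and these carry a fixed fraction of $\int|D^2U_\lambda|^2$ only when $h\lambda\lesssim 1$. For $h\lambda\gg 1$ the bound $\mathrm{dist}^2\gtrsim(h\lambda)^{2(2-s)}$ you aim for is false, since the distance is $\lesssim 1$ ($[u_h]_{\dot H^s}$ and the bubble's seminorm are both of order one). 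What must be proved instead is $\mathrm{dist}\gtrsim 1$ in that regime, which then forces $h\lambda\lesssim 1$.

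The paper does this by contradiction in Step III. If $c_h\ll h$, a fixed fraction of $B_{c_h}(X_h)$ lies in one element, where $u_h$ is affine. On a cube of side $\sim c_h$ one therefore has $D^2(u_h-\Phi)=-D^2\Phi$ with $\|D^2\Phi\|_{L^{2_s^*}}\lesssim c_h^{-2}$. At the same time $\|D(u_h-\Phi)\|_{L^{2_s^*}}\ge\inf_{A\in\RR^N}\|A-D\Phi\|_{L^{2_s^*}}\gtrsim c_h^{-1}$. The interpolation inequality on cubes then contradicts $\|u_h-\Phi\|_{L^{2_s^*}}\lesssim h^{\alpha/2}$.

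Your case (a) bound $\lambda^{-(N-2s)}$ has similar unproved prerequisites: $\lambda\gtrsim 1$, an amplitude factor of order one in front of $U_{\lambda,x_0}$, and $x_0\in\BB_h$. The paper gets these in Steps I--II from $L^{2_s^*}$-mass bookkeeping and a half-space argument; you either assert them or omit them.

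With these facts in hand no case split is needed. The exterior--exterior term $\gtrsim c_h^{N-2s}$ and the elementwise term $\gtrsim(h/c_h)^{4-2s}$ live on disjoint parts of the double integral and simply add. The elementwise estimate must still be restricted to elements at distance $\ge 2Mc_h$ from $X_h$. When $X_h$ is close to $\partial\BB$, one also needs a cone with vertex $X_h$ inside $\BB_h$ so that this region keeps a fixed fraction of the Hessian mass.
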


To the best of our knowledge, this is the first result establishing sharp
convergence rates for the finite element approximation of an optimal
constant in a fractional Sobolev inequality.

The exponent $\alpha_{N,s}$ reflects a competition between two error
contributions: the interpolation deficit of a concentrated bubble
$\Phi_{\lambda,c,0}$, of order $(h/c)^{2(2-s)}$, and its mass
concentration near the origin, of order $c^{N-2s}$. Balancing these by
choosing $c=c_h\sim h^{\frac{2(2-s)}{N+4(1-s)}}$ yields the rate
$h^\alpha$. The exponent is strictly positive for all $s\in(0,1)$, and
recovers the classical rate $\alpha_{N,1}=\frac{2(N-2)}{N}$ as $s\to 1$,
consistent with \cite{ignat2025optimalconvergenceratesfinite}.


The proof of Theorem \ref{thm:main} is structured as follows: the upper bound (Section \ref{sec:estimatefromabove}) is obtained by
testing with $I_h(\Psi_{\lambda_h,c_h,0})$, the nodal interpolant of a
truncated bubble $\Psi_{\lambda_h,c_h,0}$ chosen to vanish on $\partial\BB$. Denoting by $\delta(u)$ the \textit{Fractional Sobolev Deficit}, 
the stability estimate  $\delta(u)\sim d(u,\mathcal{M})^2/\|u\|^2_{L^{2_s^*}}$
of \cite{Frank2013} reduces the problem to controlling the interpolation
error in $\dot H^s$.

The lower bound (Section \ref{sec:estimatefrombelow}) is more delicate.
Given a discrete minimizer $u_h$ with small deficit
$\delta(u_h)=S_{N,s,h}-S_{N,s}$, stability forces $u_h$ to be close to
some bubble $\Phi_{\lambda_h,c_h,X_h}$. Steps I--III extract quantitative
information on the parameters $\lambda_h$, $c_h$, $X_h$ (in particular,
that $X_h$ stays in a compact set and $c_h\to 0$ at the rate dictated
by $h^\alpha$); Step IV derives the sharp lower bound by a weighted
AM-GM inequality applied to the two competing error contributions.


The paper is organized as follows: Section \ref{sec:preliminaries} presents the continuous and discrete
frameworks. Sections \ref{sec:estimatefromabove} and
\ref{sec:estimatefrombelow} contain the upper and lower bounds,
completing the proof of Theorem \ref{thm:main}. The Appendix collects
some classical interpolation inequalities  and several technical results that play a crucial role in deriving the desired estimates.

\section{Preliminaries}
\label{sec:preliminaries}
\subsection{The continuous framework}
For $s\in (0,1)$ we define the Fractional Sobolev Seminorm:
$$ [u]^2_{\dot{H}^s(\RR^N)}\coloneqq s(1-s)\int_{\RR^N}\int_{\RR^N} \frac{|u(x)-u(y)|^2}{|x-y|^{N+2s}}\, \dd x\, \dd y$$
and the Homogeneous Fractional Sobolev Space
$$\dot H^s(\RR^N)\coloneqq \left\{u\in L^{2}_\text{loc}(\RR^N): [u]_{\dot{H}^s(\RR^N)}<\infty\right\}.$$

\begin{proposition}[Fractional Sobolev Inequality](\cite[Proposition 3.4]{DiNezzaPalatucci2012} and \cite{Frank2013})
\label{prop:Fractional-Sobolev-Inequality}
Let $0<s<\min\left\{1,\frac{N}{2}\right\}$. For every $u\in C_c^\infty(\RR^N)$ it holds that:
\[[u]_{\dot{H}^s(\RR^N)}^2\geq S_{N,s} \|u\|^2_{L^{2_s^*}(\RR^N)}, \quad \text{where }2_s^*\coloneqq \frac{2N}{N-2s}.\]
The optimal constant is 
\[S_{N,s}=2 s(1-s)\left( \int_{\RR^N} \frac{1-\cos(\zeta_1)}{|\zeta|^{N+2s}} \dd \zeta\right) 2^{2s} \pi^s \frac{\Gamma \left(\frac{N+2s}{2}\right)}{\Gamma \left(\frac{N-2s}{2}\right)}\left( \frac{\Gamma \left(\frac{N}{2}\right)}{\Gamma \left(N\right)}\right)^{\frac{2s}{N}}\]
and the set of minimizers is the following:
\[\mathcal{M}\coloneqq \left\{\Phi_{\lambda,c,X_0}(x)=\lambda \left(1+\frac{|x-X_0|^2}{c^2}\right)^{-\frac{N-2s}{2}}: \lambda \in \RR\setminus\{0\}, c>0,X_0\in \RR^N\right\}.\]
\end{proposition}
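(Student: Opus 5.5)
The plan is to reduce the statement to Lieb's sharp Hardy--Littlewood--Sobolev (HLS) inequality by passing to Fourier variables and using duality. Density then extends the inequality from $C_c^\infty(\RR^N)$ to $\dot H^s(\RR^N)$.

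\textbf{Step 1 (Fourier form of the seminorm).} By Plancherel, for $u\in C_c^\infty(\RR^N)$ and fixed $h$ we have $\int|u(x+h)-u(x)|^2\,\dd x=(2\pi)^{-N}\int|e^{i\xi\cdot h}-1|^2|\hat u(\xi)|^2\,\dd\xi$. Integrating against $|h|^{-N-2s}$ and using rotation and scaling invariance gives
\[
\int_{\RR^N}\frac{|e^{i\xi\cdot h}-1|^2}{|h|^{N+2s}}\,\dd h=2|\xi|^{2s}\int_{\RR^N}\frac{1-\cos\zeta_1}{|\zeta|^{N+2s}}\,\dd\zeta .
\]
Hence $[u]^2_{\dot H^s}=A_{N,s}\,\|(-\Delta)^{s/2}u\|_{L^2}^2$, with $A_{N,s}=2s(1-s)\int(1-\cos\zeta_1)|\zeta|^{-N-2s}\,\dd\zeta$ and the normalization $\widehat{(-\Delta)^{s/2}u}=|\xi|^s\hat u$. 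This produces exactly the prefactor in the stated $S_{N,s}$. It therefore suffices to prove the sharp inequality $\|(-\Delta)^{s/2}u\|_2^2\geq K_{N,s}\|u\|_{2_s^*}^2$ and to identify its extremals.

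\textbf{Step 2 (duality with HLS).} Write $u=I_{s}f$, with $f=(-\Delta)^{s/2}u\in L^2$ and $I_{s}$ the Riesz potential. For $g\in L^{p}$ with $p=(2_s^*)'=\frac{2N}{N+2s}$ we get
\[
\int ug=\int f\,I_{s}g\leq \|f\|_2\Bigl(\int g\,I_{2s}g\Bigr)^{1/2}.
\]
The last factor is a constant times $\iint g(x)g(y)|x-y|^{-(N-2s)}\,\dd x\,\dd y$, which is exactly the diagonal HLS form with $\lambda=N-2s$ and $p=q=\frac{2N}{2N-\lambda}$. Lieb's theorem gives the sharp bound $\iint\leq C_{HLS}\|g\|_p^2$. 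Its extremals are exactly $g=\lambda'(1+|x-X_0|^2/c^2)^{-(N+2s)/2}$. Taking the supremum over $\|g\|_p=1$ gives the Sobolev inequality with $K_{N,s}=C_{HLS}^{-1}\cdot(\text{Riesz constants})$.

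Equality forces two things: $g$ must be an HLS optimizer, and Cauchy--Schwarz must be tight, so $f\propto I_sg$. Consequently $u\propto I_{2s}g$. Since $I_{2s}$ maps the HLS bubble of exponent $-(N+2s)/2$ to a multiple of the bubble of exponent $-(N-2s)/2$, this gives precisely $\mathcal M$. The mapping property can be checked by stereographic projection, under which both functions correspond to constants on $\mathbb S^N$.

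\textbf{Step 3 (the constant and the main obstacle).} The explicit constant follows by inserting $U(x)=(1+|x|^2)^{-(N-2s)/2}$ into the quotient. One route is the known formula $(-\Delta)^sU=\kappa\,U^{2_s^*-1}$ with $\kappa=2^{2s}\Gamma(\frac{N+2s}{2})/\Gamma(\frac{N-2s}{2})$. Then $\|(-\Delta)^{s/2}U\|_2^2=\kappa\|U\|_{2_s^*}^{2_s^*}$, so the quotient equals $\kappa\|U\|_{2_s^*}^{2_s^*-2}=\kappa\|U\|_{2_s^*}^{4s/(N-2s)}$. Moreover $\int U^{2_s^*}=\int(1+|x|^2)^{-N}\,\dd x=\pi^{N/2}\Gamma(N/2)/\Gamma(N)$. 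Raising this to the power $2s/N$ gives $\pi^s(\Gamma(N/2)/\Gamma(N))^{2s/N}$, which matches the formula once multiplied by $A_{N,s}$.

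The genuinely hard ingredient is Lieb's classification of the HLS extremals. It requires existence of an optimizer via rearrangement together with compactness modulo dilations and translations, and then the identification of all optimizers via conformal invariance on $\mathbb S^N$ or the method of moving spheres. I would quote it rather than reprove it, as the paper does via \cite{MR717827,Frank2013}.
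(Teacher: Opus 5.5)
Your proposal is correct and follows essentially the route the paper relies on. The paper gives no proof of its own: it cites \cite[Proposition 3.4]{DiNezzaPalatucci2012} for exactly your Step 1 identity, which produces the prefactor $2s(1-s)\int_{\RR^N}(1-\cos\zeta_1)|\zeta|^{-N-2s}\,\dd\zeta$, and \cite{MR717827,Frank2013} for the sharp constant and the extremals of $\|(-\Delta)^{s/2}u\|_2^2\geq K_{N,s}\|u\|_{2_s^*}^2$, which are obtained precisely by your duality reduction to the diagonal sharp HLS inequality and evaluated at $U$ as in your Step 3.
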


For a function $u\in \dot H^s(\RR^N)$ we define the Fractional Sobolev Deficit:
\[\delta(u)\coloneqq \frac{[u]_{\dot{H}^s(\RR^N)}^2}{\|u\|_{L^{2_s^*}(\RR^N)}^2}- S_{N,s}.\]
We will denote by $d(u,\mathcal{M})$ the distance of a function $u\in \dot H^s(\RR^N)$ to the minimizers space $\mathcal{M}$ as follows:
  \[d(u,\mathcal{M})\coloneqq \inf\left\{[u-\Phi]_{\dot{H}^s(\RR^N)}: \Phi\in \mathcal{M}\right\}.\]

\begin{proposition}{\cite[Theorem 1.1]{Frank2013}}\label{prop:estimate-sobolev-deficit}
  There exist two positive constants $C_1$ and $C_2$, depending only on $N$ and $s\in \left(0,\min\left\{1,\frac{N}{2}\right\}\right)$ such that:
    \[C_1\, \delta(u)\leq \frac{(d(u,\mathcal{M}))^2}{\|u\|_{L^{2_s^*}(\RR^N)}^2}\leq C_2\, \delta(u), \quad \forall\ u\in \dot H^s(\RR^N).\]
\end{proposition}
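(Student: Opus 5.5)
The statement has two inequalities of very different difficulty, and I would prove them separately. Fix $u\in\dot H^s(\RR^N)$ and put $p=2_s^*>2$. The first inequality, $\delta(u)\|u\|^2_{L^{p}}\lesssim d(u,\mathcal M)^2$, should follow from a direct convexity argument. The second, $d(u,\mathcal M)^2\lesssim \delta(u)\|u\|^2_{L^p}$, is a Bianchi--Egnell type stability estimate. For it I would follow the scheme: local spectral analysis near $\mathcal M$, then a compactness argument to globalise.

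\emph{The easy inequality.} Take any $\Phi\in\mathcal M$ and write $u=\Phi+r$. Since $\Phi$ minimizes the Sobolev quotient of Proposition \ref{prop:Fractional-Sobolev-Inequality}, it satisfies the Euler--Lagrange identity
\[\langle \Phi, r\rangle_{\dot H^s}=S_{N,s}\|\Phi\|_{L^p}^{2-p}\int |\Phi|^{p-2}\Phi\, r \quad\text{for all } r,\]
where $\langle\cdot,\cdot\rangle_{\dot H^s}$ is the bilinear form associated with $[\cdot]^2_{\dot H^s}$. Because $p\ge 2$, the map $v\mapsto\|v\|_{L^p}^2$ is convex and Gâteaux differentiable. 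It therefore lies above its tangent at $\Phi$:
\[\|\Phi+r\|_{L^p}^2\ge \|\Phi\|_{L^p}^2+2\|\Phi\|_{L^p}^{2-p}\int|\Phi|^{p-2}\Phi\, r .\]
Expanding $[\Phi+r]^2$, using the Euler--Lagrange identity and $[\Phi]^2=S_{N,s}\|\Phi\|_{L^p}^2$, we obtain
\[[u]^2-S_{N,s}\|u\|_{L^p}^2\le [r]^2 .\]
Taking the infimum over $\Phi$ gives $\delta(u)\|u\|_{L^p}^2\le d(u,\mathcal M)^2$, i.e. one may take $C_1=1$.

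\emph{The hard inequality: local step.} By homogeneity normalise $\|u\|_{L^p}=1$. First treat $u$ with $d(u,\mathcal M)$ small. The infimum defining $d$ is attained at some $\Phi=\Phi_{\lambda,c,X_0}$, and by scaling and translation invariance we may take $c=1$, $X_0=0$. Then $r=u-\Phi$ is $\dot H^s$-orthogonal to the tangent space $T_\Phi\mathcal M$, which is spanned by $\Phi$, $\partial_c\Phi$ and $\partial_{X_0^j}\Phi$, so it has dimension $N+2$. A second-order expansion of the quotient gives
\[\delta(u)\ge \|\Phi\|_{L^p}^{-2}\Big([r]^2-(p-1)S_{N,s}\|\Phi\|_{L^p}^{2-p}\int\Phi^{p-2}r^2\Big)-o([r]^2).\]
So the key input is a spectral gap for the weighted eigenvalue problem
\[(-\Delta)^s v=\mu\,\Phi^{p-2}v .\]
Its eigenvalues are $\mu_1$, with eigenfunction $\Phi$, and $\mu_2=(p-1)\mu_1$, with eigenspace exactly $T_\Phi\mathcal M$ (nondegeneracy), and we need the third eigenvalue $\mu_3>\mu_2$ strictly. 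I would compute these by stereographic projection to $\mathbb S^N$. There the problem becomes the conformally invariant operator $A_{2s}$, which is diagonal in spherical harmonics with explicit eigenvalues $\Gamma(k+\tfrac N2+s)/\Gamma(k+\tfrac N2-s)$. These are increasing in $k$, which gives the gap directly. Since $r$ is orthogonal to the first two eigenspaces, the gap yields $\delta(u)\gtrsim [r]^2=d(u,\mathcal M)^2$ whenever $d$ is small.

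\emph{The hard inequality: globalisation.} The remaining step, which I expect to be the main obstacle, is a compactness lemma: if $\|u_n\|_{L^p}=1$ and $\delta(u_n)\to0$, then $d(u_n,\mathcal M)\to0$. Granting this, the global estimate follows by contradiction. If $d(u,\mathcal M)$ is bounded below, then $\delta(u)$ is bounded below by the lemma. Since $d(u,\mathcal M)^2\le[u]^2=S_{N,s}+\delta(u)$ under the normalisation, the ratio $d^2/\delta$ is then bounded, which completes the proof. For the lemma I would use Lions' concentration-compactness for minimizing sequences of the fractional Sobolev quotient, applied after renormalising the dilations and translations, as in Lieb's argument \cite{MR717827}. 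This gives strong $\dot H^s$ convergence of a renormalised subsequence to an optimizer, that is, to an element of $\mathcal M$, which is what the lemma asserts.
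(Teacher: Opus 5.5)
Your argument is correct. For the hard inequality it is essentially the proof of the source the paper relies on: the paper gives no proof of this proposition and simply quotes Chen--Frank--Weth, whose proof follows the Bianchi--Egnell scheme you describe (a spectral gap coming from the increasing eigenvalues $\Gamma(k+\frac N2+s)/\Gamma(k+\frac N2-s)$ of the conformally invariant operator on $\mathbb S^N$, then concentration--compactness to pass from the local to the global estimate), while your tangent-line convexity argument for the easy direction is a valid self-contained proof giving $C_1=1$.

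One small imprecision: the $\mu_2$-eigenspace is spanned by $\partial_c\Phi$ and $\partial_{X_0^j}\Phi$ and has dimension $N+1$, not all of $T_\Phi\mathcal M$, which also contains $\Phi$ from the $\mu_1$-eigenspace. Your later step, using orthogonality to the first two eigenspaces, is exactly right.
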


\begin{proposition}[Fractional Sobolev Inequality on domains]
Let $N$ be a positive integer, $s\in \left(0,\min\left\{1,\frac{N}{2}\right\}\right)$ and $\Omega\subseteq \RR^N$ be a bounded domain with Lipschitz boundary. Then, 
\begin{equation}\label{eq:FracSobolevDomains}
S_{N,s}=\inf_{u\in V_\Omega} \frac{[u]_{\dot{H}^{s}(\RR^N)}^2}{\|u\|_{L^{2_s^*}(\RR^N)}^{2}},
\end{equation}
where $V_{\Omega}\coloneqq \left\{u\in \dot H^s(\RR^N)\setminus \{0\}: u=0\ \text{in}\ \RR^N\setminus\Omega\right\}.$

In particular, if $\Omega$ is the unit ball of $\RR^N$, then, for every $r\in (0,1]$ the following sequence:
\[\Lambda_{\eps}(x)=\left[\left(1+\frac{|x|^2}{\eps^2}\right)^{-\frac{N-2s}{2}}-\left(1+\frac{r^2}{\eps^2}\right)^{-\frac{N-2s}{2}}\right]\chi_{B_r(0)}\]
is a minimizing sequence for \eqref{eq:FracSobolevDomains} as $\eps\to 0$.
\end{proposition}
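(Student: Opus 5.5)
The plan is to prove the two inequalities separately. The inequality $\inf_{V_\Omega}\geq S_{N,s}$ is immediate: $V_\Omega\subset \dot H^s(\RR^N)$, and the Fractional Sobolev Inequality of Proposition \ref{prop:Fractional-Sobolev-Inequality} extends from $C_c^\infty(\RR^N)$ to $\dot H^s(\RR^N)$ (functions vanishing at infinity) by density and Fatou's lemma. For the reverse inequality it suffices to exhibit a minimizing sequence in $V_\Omega$. By translation and dilation invariance of the quotient (both numerator and denominator scale the same way under $x\mapsto x_0+\rho x$), we may assume $B_r(0)\subset\Omega$; hence it is enough to treat the ball and to show that $\Lambda_\eps$ is minimizing. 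This simultaneously proves the ``in particular'' statement.

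Write $\Lambda_\eps=(U_\eps-a_\eps)_+$ with $U_\eps(x)=(1+|x|^2/\eps^2)^{-\frac{N-2s}{2}}$ and $a_\eps=(1+r^2/\eps^2)^{-\frac{N-2s}{2}}\sim (\eps/r)^{N-2s}\to 0$. Since $U_\eps$ is radially decreasing, $\Lambda_\eps$ is exactly the positive part of $U_\eps-a_\eps$, so it is Lipschitz, supported in $\overline{B_r}$, and lies in $V_{B_r}$. Because $t\mapsto (t-a)_+$ is $1$-Lipschitz, we have $|\Lambda_\eps(x)-\Lambda_\eps(y)|\le |U_\eps(x)-U_\eps(y)|$ pointwise, and therefore $[\Lambda_\eps]^2_{\dot H^s}\le [U_\eps]^2_{\dot H^s}=\eps^{N-2s}[U_1]^2_{\dot H^s}$. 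This gives the numerator bound with no error term, which is the main simplification.

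For the denominator, we use $U_\eps-a_\eps\ge (1-\theta)U_\eps$ on the set where $U_\eps\ge a_\eps/\theta$, that is, on a ball $B_{R_\eps}$ with $R_\eps\sim \theta^{1/(N-2s)} r$ for fixed small $\theta$. This yields
\[
\|\Lambda_\eps\|_{L^{2_s^*}}^{2_s^*}\ge (1-\theta)^{2_s^*}\int_{B_{R_\eps}}U_\eps^{2_s^*}=(1-\theta)^{2_s^*}\eps^N\int_{B_{R_\eps/\eps}}U_1^{2_s^*}.
\]
Since $R_\eps/\eps\to\infty$ and $U_1\in L^{2_s^*}$, the right-hand side equals $(1-\theta)^{2_s^*}\eps^N\|U_1\|^{2_s^*}_{L^{2_s^*}}(1+o(1))$. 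Combining, the quotient of $\Lambda_\eps$ is at most $(1-\theta)^{-2}\frac{[U_1]^2}{\|U_1\|^2_{L^{2_s^*}}}(1+o(1))=(1-\theta)^{-2}S_{N,s}(1+o(1))$, because $U_1\in\mathcal M$ is an extremal by Proposition \ref{prop:Fractional-Sobolev-Inequality}. Letting $\eps\to0$ and then $\theta\to0$ shows that $\Lambda_\eps$ is a minimizing sequence.

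The only delicate point is justifying that the extremal $U_1$ attains $S_{N,s}$ even though it is not in $C_c^\infty$; that is, that the infimum over $\dot H^s$ equals the infimum over $C_c^\infty$ and is attained on $\mathcal M$. This is precisely the content of Proposition \ref{prop:Fractional-Sobolev-Inequality} as cited from \cite{Frank2013}, together with the standard density of $C_c^\infty$ in $\dot H^s$ for $N>2s$. We also need $[U_1]_{\dot H^s}<\infty$, which holds since $U_1$ is smooth with gradient decaying like $|x|^{-(N-2s)-1}$. I expect no real obstacle beyond these routine checks; the Lipschitz contraction argument avoids any need to estimate the truncation error in the Gagliardo seminorm.
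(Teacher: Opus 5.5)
Your proof is correct. The overall structure matches the paper's: monotonicity in the domain gives the lower bound, and the truncated bubble gives the upper bound. Where you differ is in how you control the Gagliardo seminorm.

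The paper splits $[u_\eps]^2_{\dot H^s}=I_1+2I_2$ over $B_R\times B_R$ and $B_R\times B_R^c$. It notes that the truncation constant cancels in $I_1$, then uses rescaling and dominated convergence to get the exact asymptotics $I_1/\eps^{N-2s}\to[\Phi_{1,1,0}]^2_{\dot H^s}$ and $I_2/\eps^{N-2s}\to 0$. Its pointwise bound for $I_2$, namely $(U_\eps(x)-a_\eps)^2\le (U_\eps(x)-U_\eps(y))^2$ when $|x|<R\le |y|$, is a special case of your observation that $t\mapsto (t-a)_+$ is $1$-Lipschitz.

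Your contraction gives $[\Lambda_\eps]^2_{\dot H^s}\le\eps^{N-2s}[U_1]^2_{\dot H^s}$ in one line, with no limit to evaluate. That is all you need, since the matching lower bound comes from the Sobolev inequality on $V_\Omega$. The trade-off is that you only get $\limsup\le S_{N,s}$ and must invoke that lower bound to conclude convergence. The paper instead obtains the limit $[\Phi_{1,1,0}]^2_{\dot H^s}/\|\Phi_{1,1,0}\|^2_{L^{2_s^*}}=S_{N,s}$ directly from the first-order asymptotics of numerator and denominator.

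For the denominator, your $\theta$-cutoff is equivalent to the paper's dominated convergence step. You could even drop $\theta$: since $\Lambda_\eps(\eps z)=(U_1(z)-a_\eps)_+\uparrow U_1(z)$ as $\eps\downarrow 0$, monotone convergence gives $\eps^{-N}\|\Lambda_\eps\|^{2_s^*}_{L^{2_s^*}}\to\|U_1\|^{2_s^*}_{L^{2_s^*}}$ directly.
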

\begin{proof}
For $\Omega_1\subseteq\Omega_2\subseteq\RR^N$ we have that $V_{\Omega_1}\subseteq V_{\Omega_2}\subseteq \dot H^s(\RR^N)\setminus \{0\}$ which implies 
\begin{equation}\label{eq:sobolev:on-domain-geq-on-full}
    \inf_{u\in V_{\Omega_1}} \frac{[u]_{\dot{H}^{s}(\RR^N)}^2}{\|u\|_{L^{2_s^*}(\RR^N)}^{2}}\geq \inf_{u\in V_{\Omega_2}} \frac{[u]_{\dot{H}^{s}(\RR^N)}^2}{\|u\|_{L^{2_s^*}(\RR^N)}^{2}}\geq S_{N,s}.
\end{equation}

For simplicity, assume $0\in \Omega$. We prove that for any $R>0$
$$\inf_{u\in V_{B_R(0)}} \frac{[u]_{\dot{H}^{s}(\RR^N)}^2}{\|u\|_{L^{2_s^*}(\RR^N)}^{2}}\leq S_{N,s},\ \text{which will give us the desired estimate}.$$

Define, for $\eps>0$, the function 
$$u_{\eps}(x)=\left[\left(1+\frac{|x|^2}{\eps^2}\right)^{-\frac{N-2s}{2}}-\left(1+\frac{R^2}{\eps^2}\right)^{-\frac{N-2s}{2}}\right]\chi_{B_R(0)}.$$
Then $u_\eps=0\ \text{on}\ \RR^N\setminus B_R(0)$ and $u_{\eps}\in C_c(\RR^N)$. We will show that 
\begin{equation}\label{eq:sobolev-limit-u-eps}
    \lim\limits_{\eps\to 0} \frac{[u_{\eps}]_{\dot{H}^{s}(\RR^N)}^2}{\|u_{\eps}\|_{L^{2_s^*}(\RR^N)}^{2}}=S_{N,s}
    \end{equation}
and this will conclude the proof. Indeed, by a change of variables, we get
\begin{equation*}
\begin{split}
\|u_{\eps}\|_{L^{2_s^*}(\RR^N)}^{2_s^*} & =\int_{B_R(0)} \left[\left(1+\frac{|x|^2}{\eps^2}\right)^{-\frac{N-2s}{2}}-\left(1+\frac{R^2}{\eps^2}\right)^{-\frac{N-2s}{2}}\right]^{\frac{2N}{N-2s}} \, \dd x\\
& =\eps^{N} \int_{\RR^N} \chi_{B_{\frac{R}{\eps}}(0)}(z)\left[\left (1+|z|^2\right)^{-\frac{N-2s}{2}}-\left(1+\frac{R^2}{\eps^2}\right)^{-\frac{N-2s}{2}}\right]^{\frac{2N}{N-2s}} \, \dd z.
\end{split} 
\end{equation*}
By Lebesgue's Dominated Convergence Theorem we get that:
\begin{equation}\label{eq:u-eps-L2s-norm}
\lim\limits_{\eps\to 0}\frac{\|u_{\eps}\|_{L^{2_s^*}(\RR^N)}^2}{\eps^{N-2s}}=\| \Phi_{1,1,0}\|_{L^{2_s^*}(\RR^N)}^2
\end{equation}
Next, 
\[[u_{\eps}]^2_{\dot{H}^s(\RR^N)}=\int_{B_R(0)}\int_{B_R(0)} \frac{|u_{\eps}(x)-u_\eps(y)|^2}{|x-y|^{N+2s}} \, \dd x\, \dd y + 2\int_{B_{R}(0)}\int_{B_R^c(0)} \frac{u_{\eps}^2(x)}{|x-y|^{N+2s}} \,\dd x\, \dd y=: I_1+2 I_2.\]
By the change of variables $z\coloneqq \frac{x}{\eps}$, $t\coloneqq \frac{y}{\eps}$, we get that 
$$I_1=\eps^{N-2s} \int_{\RR^N}\int_{\RR^N} \chi_{B_{\frac{R}{\eps}}(0)}(z)\, \chi_{B_{\frac{R}{\eps}}(0)}(t)\frac{|\Phi_{1,1,0}(z)-\Phi_{1,1,0}(t)|^2}{|z-t|^{N+2s}}\, \dd z\, \dd t.$$ Again using Lebesgue's Dominated Convergence Theorem, we get that 
\begin{equation}\label{eq:u-eps-I1-converge}
    \lim\limits_{\eps\to 0} \frac{I_1}{\eps^{N-2s}}=[\Phi_{1,1,0}]^2_{\dot{H}^s(\RR^N)}.
\end{equation}

It remains to show that $\lim\limits_{\eps\to 0} \frac{I_2}{\eps^{N-2s}}=0$. Indeed,
\begin{equation*}
\begin{split}
I_2&=\int_{B_R(0)}\int_{B^c_R(0)} \frac{\left[\left(1+\frac{|x|^2}{\eps^2}\right)^{-\frac{N-2s}{2}}-\left( 1+\frac{R^2}{\eps^2}\right)^{-\frac{N-2s}{2}}\right]^2}{|x-y|^{N+2s}}\,\dd x \,\dd y \\
&\leq \int_{B_R(0)}\int_{B^c_R(0)} \frac{\left[\left(1+\frac{|x|^2}{\eps^2}\right)^{-\frac{N-2s}{2}}-\left( 1+\frac{|y|^2}{\eps^2}\right)^{-\frac{N-2s}{2}}\right]^2}{|x-y|^{N+2s}}\,\dd x\,\dd y
\end{split}
\end{equation*}
By the same change of variables as above we get that 
\begin{equation}\label{eq:u-eps-I2-convergence}
    \frac{I_2}{\eps^{N-2s}}\leq  \int_{\RR^N}\int_{\RR^N} \chi_{B_{\frac{R}{\eps}}(0)}(z) \,\chi_{B_{\frac{R}{\eps}}^c(0)} (t)\frac{|\Phi_{1,1,0}(z)-\Phi_{1,1,0}(t)|^2}{|z-t|^{N+2s}} \,\dd z \,\dd t \xrightarrow{\eps\to 0} 0.
\end{equation}
The conclusion follows by \eqref{eq:u-eps-L2s-norm}, \eqref{eq:u-eps-I1-converge} and \eqref{eq:u-eps-I2-convergence}.
\end{proof}
\subsection{The discrete framework}\label{sec:discrete-framework}
Let $\mathcal{B}=B_1(0)$ be the unit ball in $\RR^N$ and $\mathcal{T}_h$ be a regular and quasi-uniform triangulation of $\mathcal{B}$ (the set of intervals when $N=1$/triangles when $N=2$/tetrahedrons when $N=3$ etc.) i.e. if we denote by $h_T$ the diameter of the triangle $T\in \mathcal{T}_h$, by $\rho_T$ the diameter of the largest ball contained in T and by $h=\max_{T\in \mathcal{T}_h} h_T$, then there exists a positive constant $\sigma$ (independent of $h$) such that $$\frac{h_T}{\rho_T}\leq \sigma$$ for any $T\in\mathcal{T}_h$ and for any $h>0$, respectively $$\inf_{h>0} \frac{\min_{T\in \mathcal{T}_h} h_T}{\max_{T\in\mathcal{T}_h} h_T} \coloneqq \rho>0.$$ Let $\mathcal{B}_h$ be the union of the triangles in $\mathcal{T}_h$ (such that all the nodes of $\partial \BB_h$ lie on $\partial \BB$) and
\[V_h\coloneqq\left\{f\in C(\bar{\BB}): f \text{ is affine on each }T\in \TT_h\text{ and } f\vert_{\bar\BB\setminus \BB_h}\equiv 0\right\} \subset H_0^1(\mathcal{B}).\] 
We define the discrete fractional Sobolev constant as:
\begin{equation}
\label{eq:DisccreteFractionalSobolevConstant}
S_{N,s,h}=\min_{u_h \in V_h}\frac{[u_h]_{\dot{H}^s(\RR^N)}^2}{\|u_h\|_{L^{2_s^*}(\RR^N)}^2}.
\end{equation}

We introduce the following notation: for two expressions $E$ and $F$, we write $E\lesssim F$ if there exists a constant $C>0$ which depends only on the dimension $N$, the fractional exponent $s\in (0,1)$, and the constants $\sigma$ and $\rho$ such that $E\leq C\, F$. We also write $E\sim F$ provided that both $E\lesssim F$ and $F\lesssim E$ hold true. If the constants involved also depend on some other parameters e.g. $p$, $q$, then we write $\lesssim_p$, $\lesssim_{p,q}$, $\sim_q$ etc.

\section{Upper Bound in Theorem \ref{thm:main}}
\label{sec:estimatefromabove}

The main tool that we use in order to prove Theorem \ref{thm:main} is the estimate on the deficit in Proposition \ref{prop:estimate-sobolev-deficit}. Thus, in order to prove the second inequality in Theorem \ref{thm:main}, it suffices to find a family of functions in $V_h$ such that their Sobolev deficit is of order $\mathcal{O}(h^\alpha)$. In the following, we construct such a family.

For $\lambda,c>0$, let us define $\Psi_{\lambda,c,0}:\RR^N\rightarrow\RR $ to be a perturbation of $\Phi_{\lambda,c,0}$ constructed in order to vanish on the boundary of $\BB$:
\begin{equation}
\label{def.psi}
  \Psi_{\lambda,c,0}\coloneqq\Phi_{\lambda,c,0}-\lambda\left(1+\frac{1}{c^2}\right)^{-\frac{N-2s}{2}}.
\end{equation}
 In particular it satisfies 
\begin{equation}
\label{ineg.psi}
  0\leq \Psi _{\lambda,c,0}(x)\leq  \Phi_{\lambda,c,0}(x), \quad x\in  \overline{\BB}. 
\end{equation}

Throughout the section $c_h$ is a small parameter such that  $h<< c_h<<1 $. We will choose $\lambda_h$ such that $\|\Psi_{\lambda_h,c_h,0}\|_{L^{2_s^*}(\BB)}=1$. In view of Lemma \ref{lem:estimateforlambda}, we must have 
\begin{equation}
\label{rel.lambdah}
  |\lambda_h|\sim c_h^{-\frac{N-2s}{2}}.
\end{equation}
We also denote by $I_h(\Psi_{\lambda_h,c_h,0})\in V_h$ the piecewise linear interpolant of $\Psi_{\lambda_h,c_h,0}$ extended with zero outside $\BB_h$. Note that, since all the nodes of $\partial \BB_h$ are on $\partial \BB$, a linear interpolant of a $V_{\BB}$ function will vanish on $\partial \BB_h$.

Along the paper we will denote by $D^2 u$ the  Hessian matrix associated with $u$ and  
\[
|D^2u(x)|=\left(\sum _{i,j=1}^N |\partial_{x_ix_j}u(x)|^2 \right)^{1/2}.
\]

Using the definition of $\Psi_{\lambda,c,0}$ in \eqref{def.psi} and  Proposition \ref{prop:estimate-sobolev-deficit} we get:
\begin{equation}\label{eq:Ineq-with-distance}
\begin{split}
S_{N,s,h}-S_{N,s}&\leq \delta(I_h(\Psi_{\lambda_h,c_h,0}))\\
& \lesssim \frac{(d(I_h(\Psi_{\lambda_h,c_h,0}),\MM))^2}{\|I_h(\Psi_{\lambda_h,c_h,0})\|^2_{L^{2_s^*}(\RR^N)}}\\
&\lesssim \frac{[I_h(\Psi_{\lambda_h,c_h,0})-\Phi_{\lambda_h,c_h,0}]^2_{\dot{H}^s(\RR^N)}}{\|I_h(\Psi_{\lambda_h,c_h,0})\|^2_{L^{2_s^*}(\RR^N)}}\\
&\lesssim \frac{[I_h(\Psi_{\lambda_h,c_h,0})-\Psi_{\lambda_h,c_h,0}]^2_{\dot{H}^s(\RR^N)}}{\|I_h(\Psi_{\lambda_h,c_h,0})\|^2_{L^{2_s^*}(\RR^N)}}+\frac{[\Psi_{\lambda_h,c_h,0}-\Phi_{\lambda_h,c_h,0}]^2_{\dot{H}^s(\RR^N)}}{\|I_h(\Psi_{\lambda_h,c_h,0})\|^2_{L^{2_s^*}(\RR^N)}}\\
&=\frac{[I_h(\Psi_{\lambda_h,c_h,0})-\Psi_{\lambda_h,c_h,0}]^2_{\dot{H}^s(\RR^N)}}{\|I_h(\Psi_{\lambda_h,c_h,0})\|^2_{L^{2_s^*}(\RR^N)}}.
\end{split}
\end{equation}

In the following we will prove that the last term is of order $\mathcal{O}(h^{\alpha})$. 
The following inequalities will play an important role:
\begin{lemma}{\cite[Lemma 2.1]{ignat2025optimalconvergenceratesfinite}}
\label{lem:interpolating-W2infty-functions}
Let $\TT_h$ be a regular mesh on a polyhedral domain $\Omega\in \RR^N,\ N\geq 1$ and $k\in\{0,1\}$. Then, for all $1\leq p<\infty$ and $|\beta|=k$, there exists a positive constant $C=C_{N,p,k,\sigma}$ such that:
\begin{equation*}
\left(\sum_{T\in \TT_h} \|D^\beta(u-I_h u)\|_{L^p(T)}^p\right)^{\frac{1}{p}}\leq Ch^{2-k} \left(\sum_{T\in\TT_h} |T|\|D^2 u\|_{L^{\infty}(T)}^p\right)^{\frac{1}{p}}, \quad\text{for any}\ u\in W^{2,\infty}(\Omega),
\end{equation*}
and
\begin{equation*}
\max_{T\in \TT_h} \|D^\beta(u-I_h u)\|_{L^{\infty}(T)}\leq C h^{2-k} \max_{T\in\TT_h} \|D^2 u\|_{L^{\infty}(T)}, \quad\text{for any}\ u\in W^{2,\infty}(\Omega).
\end{equation*}
Moreover, for any $T\in\TT_h$ and $p\geq 1$:
\begin{equation*}
\|D^\beta I_h u\|_{L^p(T)}\lesssim h_T^{-k+\frac{N}{p}} \|u\|_{L^{\infty}(T)}, \quad\text{for any}\ u\in C^{1}(T).
\end{equation*}
\end{lemma}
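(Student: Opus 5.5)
This lemma is classical local interpolation theory for piecewise linear Lagrange interpolation on shape-regular meshes. I would prove it element by element via the reference element and then sum. Fix $T\in\TT_h$ with affine map $F_T(\hat x)=B_T\hat x+b_T$ from the reference simplex $\hat T$. Shape regularity gives
\[
\|B_T\|\lesssim h_T,\qquad \|B_T^{-1}\|\lesssim \rho_T^{-1}\lesssim_\sigma h_T^{-1},\qquad |\det B_T|\sim |T|.
\]
Set $\hat u=u\circ F_T$. Interpolation commutes with affine maps, so $\widehat{I_h u}=\hat I\hat u$, where $\hat I$ is the $P_1$ nodal interpolant on $\hat T$.

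\textbf{Step 1 (reference estimate).} On $\hat T$, the operator $\hat u\mapsto \hat u-\hat I\hat u$ is bounded from $W^{2,\infty}(\hat T)$ to $W^{1,\infty}(\hat T)$, since nodal values are controlled by the sup norm. It also annihilates $P_1$. By the Bramble--Hilbert lemma (or directly, by Taylor expansion about a vertex),
\[
\|\hat u-\hat I\hat u\|_{W^{1,\infty}(\hat T)}\lesssim \|\hat D^2\hat u\|_{L^\infty(\hat T)}.
\]

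\textbf{Step 2 (scaling).} We have $\|\hat D^2\hat u\|_{L^\infty(\hat T)}\lesssim h_T^2\|D^2u\|_{L^\infty(T)}$. For $|\beta|=k$, transforming back gives
\[
\|D^\beta(u-I_hu)\|_{L^\infty(T)}\lesssim \|B_T^{-1}\|^k\,\|\hat u-\hat I\hat u\|_{W^{k,\infty}(\hat T)}\lesssim h_T^{2-k}\|D^2u\|_{L^\infty(T)}.
\]
Taking the maximum over $T$ and using $h_T\le h$ yields the second (sup) estimate. For the $L^p$ estimate, bound $\|\cdot\|_{L^p(T)}^p\le |T|\,\|\cdot\|_{L^\infty(T)}^p$, apply the element bound, sum over $T\in\TT_h$, and replace $h_T$ by $h$. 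This gives exactly the weighted sum $\sum_T |T|\,\|D^2u\|^p_{L^\infty(T)}$ on the right-hand side.

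\textbf{Step 3 (inverse-type bound).} The function $I_hu$ restricted to $T$ is affine and equal to $\sum_i u(a_i)\lambda_i$, where $\lambda_i$ are the barycentric coordinates. Hence $|I_hu|\le \|u\|_{L^\infty(T)}$ on $T$. Moreover $|\nabla\lambda_i|\lesssim \rho_T^{-1}\lesssim h_T^{-1}$, so $|D^\beta I_hu|\lesssim h_T^{-k}\|u\|_{L^\infty(T)}$ pointwise. Integrating over $T$ with $|T|\lesssim h_T^N$ gives the factor $h_T^{N/p}$. The $C^1$ hypothesis is only needed so that the nodal values are defined.

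The only real obstacle is tracking the dependence of the constants. The constants should depend only on $N$, $p$, $k$ and $\sigma$, which holds because every ratio $h_T/\rho_T$ is bounded by $\sigma$. The $L^p$ estimate keeps the local factor $\|D^2u\|_{L^\infty(T)}$ rather than a global norm, which matters for concentrated bubbles. That requires only the elementwise argument above, not a global Sobolev embedding.
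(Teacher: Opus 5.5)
Your proof is correct. The elementwise bound $\|D^\beta(u-I_hu)\|_{L^\infty(T)}\lesssim_\sigma h_T^{2-k}\|D^2u\|_{L^\infty(T)}$, obtained by affine scaling from the reference simplex, combined with $\|\cdot\|_{L^p(T)}^p\le |T|\,\|\cdot\|_{L^\infty(T)}^p$, summation over $T$ and $h_T\le h$, gives both interpolation estimates, and the barycentric bound $|\nabla\lambda_i|\le \rho_T^{-1}\le \sigma h_T^{-1}$ gives the inverse estimate. The paper does not prove this lemma but only quotes \cite[Lemma 2.1]{ignat2025optimalconvergenceratesfinite}, and your reference-element/Bramble--Hilbert scaling argument is the standard proof of such local interpolation bounds, so nothing is missing.
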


\subsection{Proof of the upper bound.}

We will estimate the seminorm $[I_h(\Psi_{\lambda_h,c_h,0})-\Psi_{\lambda_h,c_h,0}]^2_{\dot{H}^s(\mathbb{R}^N)}$ by splitting it in the following three integrals:

\begin{equation}
\label{eq:integral-on-complement}
J_1\coloneqq \int_{\BB_h^c}\int_{\BB_h^c} \frac{|\Psi_{\lambda_h,c_h,0}(x)-\Psi_{\lambda_h,c_h,0}(y)|^2}{|x-y|^{N+2s}} \, \dd x \, \dd y,
\end{equation}

\begin{equation}
\label{eq:integral-in}
J_2\coloneqq  \int_{\BB_h} \int_{\BB_h} \frac{|(I_h(\Psi_{\lambda_h,c_h,0})-\Psi_{\lambda_h,c_h,0})(x)-(I_h(\Psi_{\lambda_h,c_h,0})-\Psi_{\lambda_h,c_h,0})(y)|^2}{|x-y|^{N+2s}} \, \dd x\, \dd y
\end{equation}

and

\begin{equation}
\label{eq:mixt-integral} 
J_3\coloneqq 2\int_{\BB_h^c} \int_{\BB_h} \frac{|(I_h(\Psi_{\lambda_h,c_h,0})-\Psi_{\lambda_h,c_h,0})(x)-(I_h(\Psi_{\lambda_h,c_h,0})-\Psi_{\lambda_h,c_h,0})(y)|^2}{|x-y|^{N+2s}} \, \dd x \, \dd y.
\end{equation}

We claim the following estimates 
\begin{equation}\label{claim.1}
J_1\lesssim c_h^{N-2s},
\end{equation}
\begin{equation}
\label{claim.2}
   J_2\lesssim   \left(\frac h{c_h}\right)^{2(2-s)},
\end{equation}
\begin{equation}
\label{claim.3}
   J_3\lesssim   c_h^{N-2s}+\left(\frac h{c_h}\right)^{2(2-s)},
\end{equation}
and
\begin{equation}
\label{claim.4}
   \|I_h(\Psi_{\lambda_h,c_h,0})\|_{L^{2_s^*}(\RR^N)}\simeq 1.
\end{equation}

Putting all together it gives us 
\[\frac{[I_h(\Psi_{\lambda_h,c_h,0})-\Psi_{\lambda_h,c_h,0}]^2_{\dot{H}^s(\mathbb{R}^N)}}{ \|I_h(\Psi_{\lambda_h,c_h,0})\|^2_{L^{2_s^*}(\RR^N)}}\lesssim c_h^{N-2s}+\left(\frac h {c_h}\right)^{2(2-s)}.
\]
Taking $c_h>>h$ such that the two terms in the right hand side to be equal we obtain the desired estimate.

\vspace{0.5cm}

In the following we will prove the above claims.
 
\begin{lemma}
\label{lem:J1} The term $J_1$ satisfies
\begin{equation}
J_1\lesssim c_h^{N-2s}.
\end{equation}
\end{lemma}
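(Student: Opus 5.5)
We bound $J_1$ by enlarging the domain of integration and then rescaling to the profile $\Phi_{1,1,0}$. The key observation is that $\BB_h\subseteq\BB$, so $\BB_h^c\supseteq\BB^c$. On $\BB^c$ the function $\Psi_{\lambda_h,c_h,0}$ is defined in \eqref{def.psi} on all of $\RR^N$. Since the relevant function in the difference $I_h(\Psi)-\Psi$ outside $\BB_h$ is $-\Psi$ (as $I_h\Psi$ vanishes there), $J_1$ only involves $\Psi$ itself. The constant in \eqref{def.psi} cancels in differences, so
\[
|\Psi_{\lambda_h,c_h,0}(x)-\Psi_{\lambda_h,c_h,0}(y)|=|\Phi_{\lambda_h,c_h,0}(x)-\Phi_{\lambda_h,c_h,0}(y)|.
\]
Hence
\[
J_1\le \int_{\BB_h^c}\int_{\BB_h^c}\frac{|\Phi_{\lambda_h,c_h,0}(x)-\Phi_{\lambda_h,c_h,0}(y)|^2}{|x-y|^{N+2s}}\,\dd x\,\dd y .
\]

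The region $\BB_h^c$ is slightly larger than $\BB^c$: $\BB\setminus\BB_h$ is a thin layer near $\partial\BB$, of width $O(h^2)$. It is still contained in $\{|x|\ge 1/2\}$ for small $h$. So I enlarge the domain further to $\{|x|\ge 1/2\}\times\{|y|\ge 1/2\}$.

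Next I rescale with $x=c_h z$, $y=c_h t$. Using $\Phi_{\lambda,c,0}(c_h z)=\lambda\Phi_{1,1,0}(z)$ and \eqref{rel.lambdah}, this gives
\[
J_1\lesssim \lambda_h^2 c_h^{N-2s}\int_{|z|\ge R_h}\int_{|t|\ge R_h}\frac{|\Phi_{1,1,0}(z)-\Phi_{1,1,0}(t)|^2}{|z-t|^{N+2s}}\,\dd z\,\dd t
\sim \int_{|z|\ge R_h}\int_{|t|\ge R_h}\cdots ,
\]
where $R_h=1/(2c_h)\to\infty$. The claim is therefore that the Gagliardo energy of $\Phi:=\Phi_{1,1,0}$ restricted to the exterior of the ball of radius $R$ is $\lesssim R^{-(N-2s)}$. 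With $R=R_h$ this is $c_h^{N-2s}$, as desired.

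To prove this tail estimate, I write $E_R=\{|z|\ge R\}$ and split $E_R\times E_R$ into two regions.

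On the near region $|z-t|<|z|/2$, both points have modulus comparable to $|z|$. The mean value theorem with $|\nabla\Phi(w)|\lesssim |w|^{-(N-2s)-1}$ bounds the integrand by $|z|^{-2(N-2s)-2}|z-t|^{2-N-2s}$. Integrating in $t$ over $|z-t|<|z|/2$ gives $|z|^{-2(N-2s)-2+2-2s}=|z|^{-2N+2s}$. Integrating over $|z|\ge R$ gives $R^{-(N-2s)}$.

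On the far region $|z-t|\ge |z|/2$, I use $|\Phi(z)-\Phi(t)|^2\le 2\Phi(z)^2+2\Phi(t)^2$ and symmetrize. For $\Phi(z)^2\lesssim|z|^{-2(N-2s)}$, integrating $|z-t|^{-N-2s}$ over $|z-t|\ge|z|/2$ gives $|z|^{-2s}$, and the total is again $\int_{E_R}|z|^{-2N+2s}\,\dd z\sim R^{-(N-2s)}$. For the $\Phi(t)^2$ term I also use $|z-t|\ge|z|/2$. If $|t|\le 2|z|$, then $|z-t|\gtrsim|z|\gtrsim|t|$. If $|t|>2|z|$, then $|z-t|\gtrsim|t|$. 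In either case the term is bounded by the symmetric version of the previous computation.

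The main obstacle is purely this decay computation. The only potential subtlety is justifying the reduction from $\BB_h^c$ to $\{|x|\ge1/2\}$, which follows from the mesh nodes lying on $\partial\BB$ and $h$ being small.
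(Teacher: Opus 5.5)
Your proof is correct and follows essentially the same route as the paper: enlarge the domain to $\{|x|\ge 1/2\}\times\{|y|\ge 1/2\}$, use the mean value theorem with the decay of $D\Phi$ on the near region $|x-y|<|x|/2$, and on the far region use the crude bound by $|\Phi(x)|^2+|\Phi(y)|^2$, splitting according to the size of $|y|$ relative to $|x|$. The differences are only cosmetic: you first rescale to $\Phi_{1,1,0}$ on $\{|z|\ge 1/(2c_h)\}$, and at the intermediate point you use the global bound $|D\Phi_{1,1,0}(w)|\lesssim |w|^{-(N-2s+1)}$ instead of the paper's monotonicity comparison with $D\Phi_{\lambda_h,c_h,0}(x/2)$.
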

\begin{proof}
We have the following bound for $J_1$: 
\begin{equation*}
	\begin{split}
	J_1 & \leq\int_{|x|\geq \frac{1}{2}} \int_{|y|\geq \frac{1}{2},\ |x-y|\leq \frac{|x|}{2}} \frac{|\Phi_{\lambda_h,c_h,0}(x)-\Phi_{\lambda_h,c_h,0}(y)|^2}{|x-y|^{N+2s}} \, \dd x \, \dd y\\ 
	& + \int_{|x|\geq \frac{1}{2}} \int_{|y|\geq \frac{1}{2},\ |x-y|>\frac{|x|}{2}} \frac{|\Phi_{\lambda_h,c_h,0}(x)-\Phi_{\lambda_h,c_h,0}(y)|^2}{|x-y|^{N+2s}} \, \dd x \, \dd y\\
&=: J_{1,1}+J_{1,2}\\
	\end{split}
\end{equation*}

To estimate $J_{1,1}$ we proceed as follows:
\begin{equation*}
J_{1,1}\leq \int_{|x|\geq \frac{1}{2}} \int_{|y|\geq \frac{1}{2},\ |x-y|\leq \frac{|x|}{2}} \frac{|D\Phi_{\lambda_h,c_h,0}(\xi)|^2}{|x-y|^{N+2s-2}} \, \dd x\, \dd y, 
\end{equation*} 
where $\xi=(1-t)x+ty$ for some $t\in [0,1]$. Since $|x-y|\leq\frac{|x|}{2}$, we get that $|\xi|=|x-t(x-y)|\geq |x|-t|x-y|\geq \frac{|x|}{2}$. Using Lemma $\ref{lem:GradHessMinimizers}$ and the fact that $\frac{|x|}{2}\geq \frac{1}{4}$, we obtain 
\begin{equation*}
\begin{split}
|D\Phi_{\lambda_h,c_h,0}(\xi)|^2\leq \left|D\Phi_{\lambda_h,c_h,0}\left(\frac{x}{2}\right)\right|^2&\sim \frac{|\lambda_h|^2}{c^4_h} \frac{c^{2(N-2s+2)}_h}{\left(c^2_h+\frac{|x|^2}{4}\right)^{N-2s+2}} |x|^2\\
&\sim c^{N-2s}_h |x|^{-(2N-4s+2)}.\\
\end{split}
\end{equation*}
Using that $\int_{|x-y|\leq \frac{|x|}{2}} \frac{1}{|x-y|^{N+2s-2}} \, \dd y\sim |x|^{2-2s}$ we get:
\begin{equation}
\label{eq:J11}
	\begin{split}
	J_{1,1} & \leq \int_{|x|\geq \frac{1}{2}} \left|D\Phi_{\lambda_h,c_h,0}\left(\frac{x}{2}\right)\right|^2 \,  \int_{|x-y|\leq\frac{x}{2}} \frac{1}{|x-y|^{N+2s-2}} \, \dd y\, \dd x\\
	& \lesssim c^{N-2s}_h\int_{|x|\geq \frac{1}{2}} |x|^{-2N+2s}\, \dd x\lesssim c^{N-2s}_h.
	\end{split}
\end{equation}

For $J_{1,2}$ we get:\\
\begin{equation}
\label{eq:J12-2-pieces}
\begin{split}
J_{1,2} & \lesssim \int_{|x|\geq \frac{1}{2}} \int_{|y|\geq \frac{1}{2},\ |x-y|> \frac{|x|}{2}} \frac{|\Phi_{\lambda_h,c_h,0}(x)|^2}{|x-y|^{N+2s}} \, \dd x \, \dd y\\
	& + \int_{|x|\geq \frac{1}{2}} \int_{|y|\geq \frac{1}{2},\ |x-y|> \frac{|x|}{2}} \frac{|\Phi_{\lambda_h,c_h,0}(y)|^2}{|x-y|^{N+2s}} \, \dd x \, \dd y.
\end{split}
\end{equation}
Using that $\int_{|z|>\frac{|x|}{2}} \frac{1}{|z|^{N+2s}} \, \dd z\sim |x|^{-2s}$, for the first integral in \eqref{eq:J12-2-pieces} we get:
\begin{equation*}
\begin{split}
\int_{|x|\geq \frac{1}{2}} \int_{|y|\geq \frac{1}{2},\ |x-y|> \frac{|x|}{2}} \frac{|\Phi_{\lambda_h,c_h,0}(x)|^2}{|x-y|^{N+2s}} \, \dd x \, \dd y&\lesssim \int_{|x|\geq \frac{1}{2}} \frac{|\Phi_{\lambda_h,c_h,0}(x)|^2}{|x|^{2s}} \, \dd x\\
&= |\lambda_h|^2 c^{2(N-2s)}_h \int_{|x|\geq \frac{1}{2}} \frac{1}{(c^2_h+|x|^2)^{N-2s}} \frac{1}{|x|^{2s}} \, \dd x\\
&\sim c^{N-2s}_h \int_{|x|\geq \frac{1}{2}} \frac{1}{|x|^{2(N-s)}} \, \dd x\sim c^{N-2s}_h.
\end{split}
\end{equation*}
We split the second integral in \eqref{eq:J12-2-pieces} in two pieces and we get:
\begin{equation}
\label{eq:J12-2}
\begin{split}
\int_{|x|\geq \frac{1}{2}} \int_{|y|\geq \frac{1}{2},\ |x-y|> \frac{|x|}{2}} \frac{|\Phi_{\lambda_h,c_h,0}(y)|^2}{|x-y|^{N+2s}} \, \dd x \, \dd y&=\int_{|x|\geq \frac{1}{2}} \int_{|y|\geq \frac{|x|}{2},\ |y|\geq \frac{1}{2},\ |x-y|> \frac{|x|}{2}} \frac{|\Phi_{\lambda_h,c_h,0}(y)|^2}{|x-y|^{N+2s}} \, \dd x \, \dd y\\
&+\int_{|x|\geq \frac{1}{2}} \int_{\frac{1}{2}\leq |y|< \frac{|x|}{2},\ |x-y|> \frac{|x|}{2}} \frac{|\Phi_{\lambda_h,c_h,0}(y)|^2}{|x-y|^{N+2s}} \, \dd x \, \dd y.
\end{split}
\end{equation}
If $|y|\geq \frac{|x|}{2}$, we get that $|\Phi_{\lambda_h,c_h,0}(y)|^2\leq \left|\Phi_{\lambda_h,c_h,0}\left(\frac{x}{2}\right)\right|^2$, while if $|y|<\frac{|x|}{2}$, we get $|x-y|> \frac{|x|}{2}>|y|$ and, in either case, both integrals in the right hand-side of \eqref{eq:J12-2} are estimated as the first integral in \eqref{eq:J12-2-pieces}. Thus, we get that
\begin{equation}
\label{eq:J12}
J_{1,2}\lesssim c_h^{N-2s}.
\end{equation}  
The result now follows from \eqref{eq:J11} and \eqref{eq:J12}.
\end{proof}

\begin{lemma}
\label{lem:J2} 
The term $J_2$ satisfies
 $$   J_2\lesssim   \left(\frac h{c_h}\right)^{2(2-s)}.$$
\end{lemma}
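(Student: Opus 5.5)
Write $e_h \coloneqq I_h(\Psi_{\lambda_h,c_h,0})-\Psi_{\lambda_h,c_h,0}$ on $\BB_h$. The plan is to interpolate between the $L^2$ and $H^1$ interpolation errors on $\BB_h$. The bound $J_2\le [e_h]^2_{H^s(\BB_h)}$ holds, and the Gagliardo seminorm on a Lipschitz domain satisfies the Gagliardo--Nirenberg type estimate $[v]^2_{H^s(\BB_h)}\lesssim \|v\|_{L^2(\BB_h)}^{2(1-s)}\|v\|_{H^1(\BB_h)}^{2s}$; this is one of the classical interpolation inequalities collected in the Appendix. I must check that its constant is uniform in $h$, since $\BB_h$ is a polyhedral approximation of the unit ball with uniformly Lipschitz character. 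Alternatively, I can extend $e_h$ to $\RR^N$ with a uniform extension operator, or argue directly by splitting the double integral into $|x-y|<r$ (use $|Dv|$) and $|x-y|\ge r$ (use $|v|$), then optimize in $r$. The direct route avoids uniformity issues and is the one I will take. It gives $J_2\lesssim \|e_h\|_{L^2}^{2(1-s)}\|De_h\|_{L^2}^{2s}+\|e_h\|^2_{L^2}$, the last term covering the interaction of the short-range part with the domain boundary.

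Next I apply Lemma \ref{lem:interpolating-W2infty-functions} with $p=2$ and $k=0,1$:
\[
\|e_h\|_{L^2}\lesssim h^2 A,\qquad \|De_h\|_{L^2}\lesssim h A,\qquad A^2\coloneqq\sum_{T\in\TT_h}|T|\,\|D^2\Psi_{\lambda_h,c_h,0}\|^2_{L^\infty(T)}.
\]
Since $\Psi$ and $\Phi$ differ by a constant, $D^2\Psi=D^2\Phi$. Lemma \ref{lem:GradHessMinimizers} gives $|D^2\Phi_{\lambda,c,0}(x)|\lesssim |\lambda| c^{N-2s}(c^2+|x|^2)^{-\frac{N-2s+2}{2}}$. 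Quasi-uniformity and $h\ll c_h$ ensure the $L^\infty(T)$ norm is comparable to the value at any point of $T$, because the profile varies on scale $\ge c_h\gg h$. Hence
\[
A^2\lesssim \int_{\RR^N}|\lambda_h|^2 c_h^{2(N-2s)}(c_h^2+|x|^2)^{-(N-2s+2)}\,\dd x .
\]
Substituting $x=c_h z$ and using \eqref{rel.lambdah}, this equals $c_h^{-(N-2s)}c_h^{2(N-2s)}c_h^{-2(N-2s+2)}c_h^{N}\int(1+|z|^2)^{-(N-2s+2)}\,\dd z\sim c_h^{-4+2s}\cdot c_h^{0}$. The exponent works out to $-(N-2s)+2(N-2s)-2(N-2s)-4+N=2s-4$. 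The $z$-integral converges since $2(N-2s+2)>N$. Thus $A\lesssim c_h^{s-2}$.

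Combining,
\[
J_2\lesssim (h^2c_h^{s-2})^{2(1-s)}(hc_h^{s-2})^{2s}+h^4c_h^{2s-4}=h^{4-2s}c_h^{2s-4}+h^4c_h^{2s-4}\lesssim\left(\frac{h}{c_h}\right)^{2(2-s)},
\]
using $h\le 1$. The main obstacle is the first step: getting an interpolation inequality for the Gagliardo seminorm on $\BB_h$ with constants uniform in $h$. The direct splitting handles it, because for the long-range part only $\int_{|z|\ge r}|z|^{-N-2s}\,\dd z$ is needed, and for the short-range part the segment $[x,y]$ may leave $\BB_h$. For that case I would use the standard estimate $\int\int_{|x-y|<r}|v(x)-v(y)|^2|x-y|^{-N-2s}\lesssim r^{2-2s}\|Dv\|^2$, valid on convex or uniformly Lipschitz domains. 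If needed, I would instead extend $e_h$ by zero outside $\BB_h$: since $e_h$ vanishes at boundary nodes but not necessarily on $\partial\BB_h$, I would rely on uniform Lipschitz extension.
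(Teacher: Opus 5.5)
Your proposal is correct and follows the paper's route: a Gagliardo--Nirenberg interpolation between $L^2(\BB_h)$ and $\dot H^1(\BB_h)$ with $\theta=s$, followed by the error bounds $\|e_h\|_{L^2}\lesssim h^2c_h^{s-2}$ and $\|De_h\|_{L^2}\lesssim hc_h^{s-2}$. The paper cites Theorem~\ref{thm:IntIneq} on the convex set $\BB_h$, whereas your $r$-splitting proves that inequality directly (and, by convexity, without the extra $\|e_h\|_{L^2}^2$ term); you also obtain the error bounds straight from Lemma~\ref{lem:interpolating-W2infty-functions}, using that $h\ll c_h$ makes the Hessian bound comparable across each element, which avoids the $q>N/2$ versus $q\le N/2$ case split of Lemmas~\ref{lem:interpolatior-on-polyhedron-Lq-error} and~\ref{lem:gradient-of-interpolatior-error-on-polyhedron}.
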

\begin{proof}
We use the Homogeneous Gagliardo-Nirenberg Interpolation Inequality \ref{thm:IntIneq} with $\Omega=\BB_h,\ k=p=q=2,\ s_0=0,\ s_1=1,\ \theta=s$ and we obtain:
\begin{equation}
\label{eq:Int-J2}
J_2\lesssim \|I_h(\Psi_{\lambda_h,c_h,0})-\Psi_{\lambda_h,c_h,0}\|^{2(1-s)}_{L^2(\BB_h)} \|D(I_h(\Psi_{\lambda_h,c_h,0})-\Psi_{\lambda_h,c_h,0})\|^{2s}_{L^2(\BB_h)}.
\end{equation}
Using Lemma \ref{lem:interpolatior-on-polyhedron-Lq-error} for $q=2$ we get that $\|I_h(\Psi_{\lambda_h,c_h,0})-\Psi_{\lambda_h,c_h,0}\|^{2}_{L^2(\BB_h)}\lesssim h^4 c_h^{-2(2-s)}$. 
Using Lemma \ref{lem:gradient-of-interpolatior-error-on-polyhedron} for $p=2$ we get that $\|D(I_h(\Psi_{\lambda_h,c_h,0})-\Psi_{\lambda_h,c_h,0})\|^{2}_{L^2(\BB_h)}\lesssim h^2 c_h^{-2(2-s)}$.
The result now follows.
\end{proof}

We now estimate $J_3$:

\begin{lemma}
\label{lem:J3}
The term $J_3$ satisfies the following estimate: $$    J_3\lesssim   c_h^{N-2s}+\left(\frac h{c_h}\right)^{2(2-s)}.$$
\end{lemma}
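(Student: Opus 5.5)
Write $e_h\coloneqq I_h(\Psi_{\lambda_h,c_h,0})-\Psi_{\lambda_h,c_h,0}$. For $y\in\BB_h^c$ we have $I_h(\Psi_{\lambda_h,c_h,0})(y)=0$, so $e_h(y)=-\Psi_{\lambda_h,c_h,0}(y)$. Adding and subtracting $\Psi_{\lambda_h,c_h,0}(x)$ in the numerator of $J_3$ gives
\[
J_3\lesssim \int_{\BB_h^c}\int_{\BB_h}\frac{|e_h(x)|^2}{|x-y|^{N+2s}}\,\dd x\,\dd y+\int_{\BB_h^c}\int_{\BB_h}\frac{|\Psi_{\lambda_h,c_h,0}(x)-\Psi_{\lambda_h,c_h,0}(y)|^2}{|x-y|^{N+2s}}\,\dd x\,\dd y=:J_{3,1}+J_{3,2}.
\]
We bound these two pieces separately.

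\textbf{The term $J_{3,2}$.} Since $\BB_h\subset\BB$ and $\Psi=\Phi-$const, we can bound $J_{3,2}$ by the same integral with $\Phi_{\lambda_h,c_h,0}$ in place of $\Psi$, taken over the full domain $\{x\in\BB,\ y\in\BB_h^c\}$. Split the $x$-region into $|x|<1/4$ and $|x|\geq 1/4$.
\begin{itemize}
\item For $|x|\ge 1/4$ and $|y|$ large enough, repeat the argument of Lemma \ref{lem:J1}. Use the mean value theorem when $|x-y|\le |x|/2$, and the pointwise decay $|\Phi_{\lambda_h,c_h,0}(z)|^2\lesssim c_h^{N-2s}|z|^{-2(N-2s)}$ otherwise. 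This yields $\lesssim c_h^{N-2s}$.
\item For $|x|<1/4$ we have $|x-y|\gtrsim 1$, so the inner $y$-integral is $\lesssim(1+|y|)^{-N-2s}$ up to constants. The remaining pieces are then $\int_{|x|<1/4}\Phi^2\,\dd x+\int_{|y|\ge 1/2}\Phi(y)^2(1+|y|)^{-N-2s}\,\dd y$.
\end{itemize}
To control $\int_{|x|<1/4}\Phi^2$, use the scaling $|\lambda_h|^2\sim c_h^{-(N-2s)}$ from \eqref{rel.lambdah}. This gives $\int_{\BB}\Phi^2\lesssim c_h^{-(N-2s)}c_h^{N}\int_{|z|<1/c_h}(1+|z|^2)^{-(N-2s)}\,\dd z$. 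This is $\lesssim c_h^{2s}\max(1,c_h^{-(N-4s)})$, possibly with a logarithm. In every case it is $\lesssim c_h^{\min(2s,N-2s)}$, which is not good enough when $2s<N-2s$. Hence for small $|x|$ I will instead exploit that $\Psi$ is small on $\partial\BB$: the difference $|\Psi(x)-\Psi(y)|$ should only be bounded by $\Psi(x)$ when $x$ is far from $\BB_h^c$, and then $\int\Psi(x)^2\,\mathrm{dist}(x,\BB_h^c)^{-2s}$ is needed. Near the origin, $\mathrm{dist}\sim 1$, and $\Psi^2$ weighted... this reproduces the same $L^2$ mass.

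So the sharper route is to note that $\Phi^2$ is integrated against $|x-y|^{-N-2s}$ with $y$ outside $\BB_h$. The correct quantity is $\Phi(x)^2$ times a bounded weight, giving $|\lambda_h|^2\int \Phi_{1,c_h,0}^2$. With the normalization, $\Phi_{\lambda_h,c_h,0}(x)=\lambda_h c_h^{N-2s}(c_h^2+|x|^2)^{-(N-2s)/2}$. Its $L^2$ norm squared over $\BB$ is $\lesssim c_h^{N-2s}\int_\BB (c_h^2+|x|^2)^{-(N-2s)}\,\dd x$. This is $\lesssim c_h^{N-2s}$ when $N>2(N-2s)$. The delicate case is $N\le 4s$ (only possible for small $N$), where I would expect to use $\mathrm{dist}(x,\partial\BB)$ more carefully, or accept the $c_h^{2s}$-type term and check it is dominated after choosing $c_h$. \textbf{This is the step I expect to be the main obstacle.}

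\textbf{The term $J_{3,1}$.} Here $\int_{\BB_h^c}|x-y|^{-N-2s}\,\dd y\lesssim \mathrm{dist}(x,\partial\BB_h)^{-2s}$, so
\[
J_{3,1}\lesssim\int_{\BB_h}|e_h(x)|^2\,\mathrm{dist}(x,\partial\BB_h)^{-2s}\,\dd x.
\]
Since $e_h\in W^{1,2}_0(\BB_h)$ (it vanishes on $\partial\BB_h$, as the nodes of $\partial\BB_h$ lie on $\partial\BB$ where $\Psi=0$), the fractional Hardy inequality on the Lipschitz domain $\BB_h$ bounds this by $\|e_h\|_{H^s(\BB_h)}^2$ for $s\ne 1/2$. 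For $s\ne1/2$, interpolation then gives $\lesssim \|e_h\|_{L^2}^{2(1-s)}\|De_h\|_{L^2}^{2s}\lesssim (h/c_h)^{2(2-s)}$, exactly as in Lemma \ref{lem:J2}.

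For $s=1/2$, or to avoid Hardy altogether, I would argue elementwise. On the boundary layer of width $h$, bound $|e_h|\lesssim h^2\|D^2\Psi\|_{L^\infty}\lesssim h^2c_h^{N-2s}|\lambda_h|$, using Lemma \ref{lem:interpolating-W2infty-functions} and the fact that the Hessian of $\Phi$ is $O(|\lambda_h| c_h^{N-2s})$ away from the origin. The weight $\mathrm{dist}^{-2s}$ is integrable across this layer, which gives a contribution far smaller than $c_h^{N-2s}$. Away from the layer, $\mathrm{dist}^{-2s}\lesssim h^{-2s}$ and $\|e_h\|_{L^2}^2\lesssim h^4c_h^{-2(2-s)}$, giving $(h/c_h)^{2(2-s)}\,h^{...}$ as needed. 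Summing the bounds on $J_{3,1}$ and $J_{3,2}$ proves the lemma.
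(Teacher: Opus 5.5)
Your proposal has a genuine gap: the way you split $J_3$ loses too much, and no sharper estimate of the pieces can repair it.

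\textbf{The splitting and your $J_{3,2}$.} First, the algebra is off. With $x\in\BB_h$ and $y\in\BB_h^c$ one has $e_h(x)-e_h(y)=e_h(x)+\Psi_{\lambda_h,c_h,0}(y)=I_h(\Psi_{\lambda_h,c_h,0})(x)-\big(\Psi_{\lambda_h,c_h,0}(x)-\Psi_{\lambda_h,c_h,0}(y)\big)$. So adding and subtracting $\Psi(x)$ pairs $\Psi(x)-\Psi(y)$ with $I_h\Psi(x)$, not with $e_h(x)$. More seriously, any split that integrates $\Psi(x)-\Psi(y)$ with $x$ ranging over the bulk of $\BB_h$ is too lossy. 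For $|x|<c_h$ and $|y|>2$ one has $|\Phi_{\lambda_h,c_h,0}(x)-\Phi_{\lambda_h,c_h,0}(y)|\gtrsim|\lambda_h|$ and $|x-y|\sim|y|$. Hence your $J_{3,2}\gtrsim|\lambda_h|^2c_h^N\sim c_h^{2s}$. This is a lower bound, and $c_h^{2s}\gg c_h^{N-2s}$ whenever $N>4s$, in particular for every $N\ge 4$. Balancing it against $(h/c_h)^{2(2-s)}$ only yields the rate $h^{s(2-s)}$, which is strictly worse than $h^{\alpha}$ when $N>4s$. Your case discussion is also inverted. $\int_{\BB}(c_h^2+|x|^2)^{-(N-2s)}\,\dd x$ is bounded uniformly in $c_h$ exactly when $N<4s$, so the bad case is the generic one, $N>4s$, not ``$N\le 4s$''.

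\textbf{Your $J_{3,1}$.} This part rests on a false claim: for $N\ge 2$, $e_h$ does not vanish on $\partial\BB_h$. The interpolant does vanish there. But the relative interiors of the boundary faces lie strictly inside $\BB$, where $\Psi_{\lambda_h,c_h,0}>0$, so $e_h=-\Psi_{\lambda_h,c_h,0}\neq 0$ on them. Consequently, for $s\ge 1/2$ both $\int_{\BB_h}|e_h|^2\,\mathrm{dist}(x,\partial\BB_h)^{-2s}\,\dd x$ and your $J_{3,1}$ are infinite. The Hardy inequality for functions with zero trace does not apply. Your elementwise fallback fails as well, because $\mathrm{dist}^{-2s}$ is not integrable across a face when $2s\ge 1$.

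\textbf{The missing idea: keep $e_h(x)-e_h(y)$ together for nearby pairs.} The function $e_h$ is continuous and piecewise $C^1$ on all of $\RR^N$. On the annulus $\{1/3\le|\xi|\le 2\}$, Lemma \ref{lem:interpolating-W2infty-functions} and Lemma \ref{lem:GradHessMinimizers} give
\[
|De_h|\lesssim h\|D^2\Phi_{\lambda_h,c_h,0}\|_{L^\infty}+|D\Phi_{\lambda_h,c_h,0}|\lesssim c_h^{\frac{N-2s}{2}}.
\]
The paper therefore splits according to the size of $|x-y|$ relative to the outer point $y$, for which $|y|\ge 1-h\ge 2/3$.
\begin{itemize}
\item If $|x-y|<|y|/2$, the whole segment $[x,y]$ lies in that annulus. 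The mean value theorem together with $\int_{|z|<1}|z|^{2-N-2s}\,\dd z<\infty$ gives a contribution $\lesssim c_h^{N-2s}$.
\item If $|x-y|\ge|y|/2\ge 1/3$, the kernel is harmless, and only then does one use $|e_h(x)-e_h(y)|^2\lesssim|e_h(x)|^2+|\Psi_{\lambda_h,c_h,0}(y)|^2$. For $|y|\ge 2/3$ one has $|\Psi_{\lambda_h,c_h,0}(y)|\lesssim c_h^{\frac{N-2s}{2}}$, and the kernel integrated over $x\in\BB$ decays like $|y|^{-N-2s}$ at infinity. Also $\|e_h\|_{L^2(\BB)}^2\lesssim(h/c_h)^{2(2-s)}+c_h^{N-2s}$, by Lemma \ref{lem:interpolatior-on-polyhedron-Lq-error} and the thinness of $\BB\setminus\BB_h$.
\end{itemize}
In this way the large values of $\Psi_{\lambda_h,c_h,0}$ near the origin never enter the estimate.
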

\begin{proof}
We bound $J_3$ with the following two terms:
\begin{equation*}
J_{3,1}\coloneqq \int_{\BB^c_h} \int_{|y|<1,\ |x-y|\geq \frac{|x|}{2}} \frac{|-\Psi_{\lambda_h,c_h,0}(x)-(I_h(\Psi_{\lambda_h,c_h,0})(y)-\Psi_{\lambda_h,c_h,0}(y))|^2}{|x-y|^{N+2s}} \, \dd x\, \dd y
\end{equation*}
and
\begin{equation*}
J_{3,2}\coloneqq \int_{\frac{2}{3}\leq |x|}\int_{|y|<1,\ |x-y|<\frac{|x|}{2}} \frac{|(I_h(\Psi_{\lambda_h,c_h,0})(x)-\Psi_{\lambda_h,c_h,0}(x))-(I_h(\Psi_{\lambda_h,c_h,0})(y)-\Psi_{\lambda_h,c_h,0}(y))|^2}{|x-y|^{N+2s}} \, \dd x\, \dd y,
\end{equation*}
where we used that $I_h(\Psi_{\lambda_h,c_h,0})$ vanishes outside $\BB_h$.
For $J_{3,1}$ we have the following estimate:
\begin{equation*}
\begin{split}
J_{3,1} & \lesssim \int_{\BB^c_h} |\Psi_{\lambda_h,c_h,0}(x)|^2  \dd x \int_{|y|<1,\ |x-y|\geq \frac{|x|}{2}} \frac{1}{|x-y|^{N+2s}}  \dd y\\
& + \int_{|y|<1} |I_h(\Psi_{\lambda_h,c_h,0})(y)-\Psi_{\lambda_h,c_h,0}(y)|^2 \, \dd y \int_{|x-y|\geq\frac{1}{3}} \frac{1}{|x-y|^{N+2s}}  \dd x\\
&\lesssim \int_{|x|\geq \frac{2}{3}} \frac{|\Psi_{\lambda_h,c_h,0}(x)|^2}{|x|^{2s}}  \dd x +\int_{\BB} |I_h(\Psi_{\lambda_h,c_h,0})(y)-\Psi_{\lambda_h,c_h,0}(y)|^2 \dd y,
\end{split}
\end{equation*}
since $\int_{|z|\geq \frac{|x|}{2}} \frac{1}{|z|^{N+2s}} \, \dd z\sim |x|^{-2s}$ and $\int_{|z|\geq\frac{1}{3}} \frac{1}{|z|^{N+2s}}\, \dd z<\infty$.
We use inequality \eqref{ineg.psi} to obtain
\begin{equation*}
\begin{split}
 \int_{|x|\geq \frac{2}{3}} \frac{|\Psi_{\lambda_h,c_h,0}(x)|^2}{|x|^{2s}}  \dd x & \lesssim |\lambda_h|^2 \int_{\frac{2}{3}}^{\infty} \sigma^{N-2s-1} \left(1+\frac{\sigma^2}{c^2_h}\right)^{-(N-2s)}\, \dd \sigma\\
&\lesssim c_h^{N-2s}\int_{\frac{2}{3}}^{\infty} \sigma^{-1-(N-2s)} \, \dd \sigma\\
&\lesssim c_h^{N-2s},
\end{split}
\end{equation*}
and, since $\BB\setminus\BB_h\subset \{x\in \RR^N: 1-h\leq |x|<1\}$, we also get:
\begin{equation*}
\begin{split}
\int_{\BB} &|I_h(\Psi_{\lambda_h,c_h,0})(y)-\Psi_{\lambda_h,c_h,0}(y)|^2 \dd y\\
&\lesssim \int_{\BB_h} |I_h(\Psi_{\lambda_h,c_h,0})(y)-\Psi_{\lambda_h,c_h,0}(y)|^2 \, \dd y+\int_{1-h\leq |y|\leq 1} |\Psi_{\lambda_h,c_h,0}(y)|^2\, \dd y\\
&\lesssim \int_{\BB_h} |I_h(\Psi_{\lambda_h,c_h,0})(y)-\Psi_{\lambda_h,c_h,0}(y)|^2 \, \dd y+|\lambda_h|^2\int_{1-h}^{1} \sigma^{N-1}\left(1+\frac{\sigma^2}{c^2_h}\right)^{-(N-2s)} \, \dd \sigma\\
&\lesssim \int_{\BB_h} |I_h(\Psi_{\lambda_h,c_h,0})(y)-\Psi_{\lambda_h,c_h,0}(y)|^2 \, \dd y+ c^{N-2s}_h. 
\end{split}
\end{equation*}
Using Lemma \ref{lem:interpolatior-on-polyhedron-Lq-error} for $q=2$ we get that 
$$\int_{\BB_h} |I_h(\Psi_{\lambda_h,c_h,0})(y)-\Psi_{\lambda_h,c_h,0}(y)|^2 \, \dd y\lesssim h^4 c_h^{-2(2-s)}\lesssim \left(\frac{h}{c_h}\right)^{2(2-s)}.$$
Thus,
\begin{equation}
\label{eq:J31}
J_{3,1}\lesssim c_h^{N-2s}+ \left(\frac{h}{c_h}\right)^{2(2-s)}.
\end{equation}
In order to estimate $J_{3,2}$ first observe that $|y|\geq |x|-|x-y|>\frac{|x|}{2}\geq \frac{1}{3}$ and then $|x|<2$. We get:
\begin{equation*}
J_{3,2}\lesssim \int_{\frac{2}{3}\leq |x|\leq 2} \int_{\frac{1}{3}\leq |y|\leq 1,\ |x-y|<\frac{|x|}{2}} \frac{|D(I_h(\Psi_{\lambda_h,c_h,0})-\Psi_{\lambda_h,c_h,0})(\xi)|^2}{|x-y|^{N+2s-2}} \, \dd x\, \dd y
\end{equation*} 
where $\xi=(1-t)x+ty$ for some $t\in [0,1]$. We get that $|\xi|=|x+t(y-x)|\geq |x|-t|y-x|\geq \frac{|x|}{2}\geq \frac{1}{3}$ and $|\xi|\leq (1-t)|x|+t|y|\leq |x|\leq 2$.
Using the fact that $\int_{|z|<1} \frac{1}{|z|^{N+2s-2}}\, \dd z<\infty$, we obtain $$J_{3,2}\lesssim \|D(I_h(\Psi_{\lambda_h,c_h,0})-\Psi_{\lambda_h,c_h,0})\|^2_{L^{\infty}(\{\frac{2}{3}\leq |x|\leq 2\})}.$$ 
Since $D(I_h(\Psi_{\lambda_h,c_h,0}))$ vanishes outside $\BB_h$, we get that 
\begin{equation*}
\begin{split}
&\|D(I_h(\Psi_{\lambda_h,c_h,0})-\Psi_{\lambda_h,c_h,0})\|^2_{L^{\infty}(\{\frac{2}{3}\leq |x|\leq 2\})}\\
&\quad =\max\{\|D(I_h(\Psi_{\lambda_h,c_h,0})-\Psi_{\lambda_h,c_h,0})\|^2_{L^{\infty}(\{\frac{2}{3}\leq |x|\leq 1\}\cap \BB_h)}, \|D\Psi_{\lambda_h,c_h,0}\|^2_{L^{\infty}(\{\frac{2}{3}\leq |x|\leq 2\}\setminus \BB_h)}\}.
\end{split}
\end{equation*}
Using that $D^2\Psi_{\lambda_h,c_h,0}=D^2\Phi_{\lambda_h,c_h,0}$, we employ Lemma \ref{lem:interpolating-W2infty-functions} to obtain:
\begin{equation*}
\|D(I_h(\Psi_{\lambda_h,c_h,0})-\Psi_{\lambda_h,c_h,0})\|^2_{L^{\infty}(\{\frac{2}{3}\leq |x|\leq 1\}\cap \BB_h)}\lesssim h^2\|D^2\Phi_{\lambda_h,c_h,0}\|^2_{L^{\infty}(\frac{2}{3}-h\leq |x|\leq 1)}.
\end{equation*}
Using now estimate \eqref{second.derivative} we get, for $\frac{2}{3}-h\leq |x|\leq 1$
\begin{equation*}
|D^2\Phi_{\lambda_h,c_h,0}(x)|\lesssim \frac{|\lambda_h|}{c_h^2} \frac{c^{N+2-2s}_h}{(c^2_h+|x|^2)^{\frac{N+2-2s}{2}}}\lesssim c^{\frac{N-2s}{2}}_h.
\end{equation*}
Thus, we obtain
\begin{equation*}
\|D(I_h(\Psi_{\lambda_h,c_h,0})-\Phi_{\lambda_h,c_h,0})\|^2_{L^{\infty}(\{\frac{2}{3}\leq |x|\leq 1\}\cap \BB_h)} \lesssim h^{2} c_h^{N-2s}\lesssim c_h^{N-2s}.
\end{equation*}
Using that $D\Psi_{\lambda_h,c_h,0}=D\Phi_{\lambda_h,c_h,0}$ and the fact that $\BB\setminus\BB_h\subset \{1-h\leq |x|\leq 1\}$ we get:
\begin{equation*}
\|D\Psi_{\lambda_h,c_h,0}\|^2_{L^{\infty}(\{\frac{2}{3}\leq |x|\leq 2\}\setminus \BB_h)}\leq\|D\Phi_{\lambda_h,c_h,0}\|^2_{L^{\infty}(\{1-h\leq |x|\leq 2\})}.
\end{equation*}
For $1-h\leq |x|\leq 2$ we get, using Lemma \ref{lem:GradHessMinimizers}:
\begin{equation*}
|D\Phi_{\lambda_h,c_h,0}(x)|\sim \frac{|\lambda_h|}{c^2_h} \frac{c^{N-2s+2}_h}{(c_h^2+|x|^2)^{\frac{N-2s+2}{2}}} |x|\sim c^{\frac{N-2s}{2}}_h.
\end{equation*}
Thus, we get that 
\begin{equation*}
\|D\Psi_{\lambda_h,c_h,0}\|^2_{L^{\infty}(\{\frac{1}{2}\leq |x|\leq 2\}\setminus \BB_h)}\lesssim c_h^{N-2s},
\end{equation*}
and then 
\begin{equation}
\label{eq:J32}
J_{3,2}\lesssim c_h^{N-2s}.
\end{equation}
The result now follows from \eqref{eq:J31} and \eqref{eq:J32}.
\end{proof}

Finally, we prove  $\|I_h(\Psi_{\lambda_h,c_h,0})\|_{L^{2_s^*}(\RR^N)}$ is like a constant \eqref{claim.4}.

\begin{proposition}\label{prop:estimate-from-above-denominator} 
The following holds:
   \[ \|I_h(\Psi_{\lambda_h,c_h,0})\|_{L^{2_s^*}(\RR^N)}\simeq  1.\]  
\end{proposition}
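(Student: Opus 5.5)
We will compare $I_h(\Psi_{\lambda_h,c_h,0})$ with $\Psi_{\lambda_h,c_h,0}$, whose $L^{2_s^*}(\RR^N)$ norm is normalized to be $1$ (note $\Psi_{\lambda_h,c_h,0}$ is understood as restricted to $\BB$, i.e.\ extended by zero, which is how the normalization $\|\Psi_{\lambda_h,c_h,0}\|_{L^{2_s^*}(\BB)}=1$ is stated). By the triangle inequality
\[
\Bigl|\|I_h(\Psi_{\lambda_h,c_h,0})\|_{L^{2_s^*}(\RR^N)}-1\Bigr|\leq \|I_h(\Psi_{\lambda_h,c_h,0})-\Psi_{\lambda_h,c_h,0}\|_{L^{2_s^*}(\BB)},
\]
since both functions vanish outside $\BB$ (the interpolant vanishes outside $\BB_h\subset\BB$). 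Hence it suffices to show that the right-hand side tends to $0$ as $h\to0$, under the standing assumption $h\ll c_h\ll 1$. Then for $h$ small the norm lies in $[1/2,3/2]$, which gives the claim.

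The error splits into two pieces. The first is the interpolation error on $\BB_h$. The second is the norm of $\Psi_{\lambda_h,c_h,0}$ on the thin shell $\BB\setminus\BB_h\subset\{1-h\leq|x|\leq1\}$.

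For the shell we use \eqref{ineg.psi} and the bound $|\Phi_{\lambda_h,c_h,0}(x)|\lesssim|\lambda_h|c_h^{N-2s}|x|^{-(N-2s)}\sim c_h^{\frac{N-2s}{2}}$ for $|x|\ge1/2$, which follows from \eqref{rel.lambdah}. This gives
\[
\|\Psi_{\lambda_h,c_h,0}\|_{L^{2_s^*}(\BB\setminus\BB_h)}^{2_s^*}\lesssim h\,c_h^{N},
\]
and this tends to $0$.

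For the interpolation error on $\BB_h$ we invoke Lemma \ref{lem:interpolatior-on-polyhedron-Lq-error} with $q=2_s^*$. If one prefers to work from Lemma \ref{lem:interpolating-W2infty-functions} directly, the computation runs as follows. Apply the first estimate with $k=0$ and $p=2_s^*$, which gives
\[
\|I_h\Psi-\Psi\|_{L^{2_s^*}(\BB_h)}\lesssim h^2\Bigl(\sum_T|T|\,\|D^2\Phi_{\lambda_h,c_h,0}\|_{L^\infty(T)}^{2_s^*}\Bigr)^{1/2_s^*}.
\]
Next use \eqref{second.derivative}: $|D^2\Phi_{\lambda_h,c_h,0}(x)|\lesssim |\lambda_h|c_h^{-2}(1+|x|^2/c_h^2)^{-\frac{N+2-2s}{2}}$. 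On each element $T$ of size $\sim h\ll c_h$, the quantity $c_h^2+|x|^2$ is comparable to its value at any point of $T$, so the $L^\infty(T)$ norm can be replaced by pointwise values. The discrete sum is then comparable to the integral of $|D^2\Phi_{\lambda_h,c_h,0}|^{2_s^*}$ over a slightly enlarged ball. After the scaling $x=c_hz$ the integral is bounded by
\[
|\lambda_h|^{2_s^*}c_h^{-2\cdot2_s^*}c_h^N\int_{\RR^N}(1+|z|^2)^{-\frac{(N+2-2s)2_s^*}{2}}\,\dd z .
\]
The $z$-integral converges because $(N+2-2s)2_s^*>N$. Using $|\lambda_h|^{2_s^*}\sim c_h^{-N}$, the error is therefore $\lesssim (h/c_h)^2$, which tends to $0$.

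The main obstacle is the step where the discrete sum over elements is replaced by an integral. This requires the comparability $\sup_T(c_h^2+|x|^2)\sim\inf_T(c_h^2+|x|^2)$, which holds uniformly because $h\le c_h$ and quasi-uniformity gives $h_T\sim h$. Once that is justified, the remaining steps are routine scalings. Combining the two pieces gives $\|I_h(\Psi_{\lambda_h,c_h,0})\|_{L^{2_s^*}(\RR^N)}=1+o(1)$ as $h\to0$, hence $\simeq1$.
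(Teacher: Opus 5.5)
Your proof is correct and follows the paper's argument: the same triangle inequality, and the same split into the interpolation error on $\BB_h$ (Lemma \ref{lem:interpolatior-on-polyhedron-Lq-error} with $q=2_s^*$, giving $(h/c_h)^2$) and the contribution of the thin shell $\BB\setminus\BB_h$. Your shell bound $h\,c_h^{N}$ is slightly sharper than the paper's $c_h^{N}$, and your optional direct derivation is also valid, since comparability of $c_h^2+|x|^2$ on each element holds because $h\le c_h$.
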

\begin{proof}
Note first that, since $I_h(\Psi_{\lambda_h,c_h,0})$ is extended with 0 outside $\BB_h$, we get that $\|I_h(\Psi_{\lambda_h,c_h,0})\|_{L^{2^*_s}(\RR^N)}=\|I_h(\Psi_{\lambda_h,c_h,0)})\|_{L^{2^*_s}(\BB)}$. Next, using the triangle inequality we obtain: 
\begin{equation*}
\begin{split}
\left|1- \|I_h(\Psi_{\lambda_h,c_h,0})\|_{L^{2^*_s}(\RR^N)}\right|& =\left|\|\Psi_{\lambda_h,c_h,0}\|_{L^{2^*_s}(\BB)}-\|I_h(\Psi_{\lambda_h,c_h,0})\|_{L^{2^*_s}(\BB)}\right|\\
& \leq \|\Psi_{\lambda_h,c_h,0}-I_h(\Psi_{\lambda_h,c_h,0})\|_{L^{2^*_s}(\BB)}.
\end{split}
\end{equation*}
We bound the last norm as follows:
\begin{equation*}
\|\Psi_{\lambda_h,c_h,0}-I_h(\Psi_{\lambda_h,c_h,0})\|_{L^{2^*_s}(\BB)} \leq \|\Psi_{\lambda_h,c_h,0}-I_h(\Psi_{\lambda_h,c_h,0})\|_{L^{2^*_s}(\BB_h)}+\|\Psi_{\lambda_h,c_h,0}\|_{L^{2_s^*}(\BB\setminus\BB_h)}.
\end{equation*}
In order to estimate the first term in the above inequality we use Lemma \ref{lem:interpolatior-on-polyhedron-Lq-error} for $q=2_s^*$ and we obtain:
\begin{equation*}
\|\Psi_{\lambda_h,c_h,0}-I_h(\Psi_{\lambda_h,c_h,0})\| _{L^{2^*_s}(\BB_h)}\lesssim h^2c_h^{-2}.
\end{equation*}
In order to bound the second term we use the fact that $\BB\setminus\BB_h\subset \{1-h\leq |x|\leq 1\}$ to obtain
\begin{equation*}
\begin{split}
\|\Psi_{\lambda_h,c_h,0}\|^{2_s^*}_{L^{2_s^*}(\BB\setminus\BB_h)}&\lesssim |\lambda_h|^{2_s^*} \int_{\{1-h\leq |x|\leq 1\}} \left[\left(1+\frac{|x|^2}{c_h^2}\right)^{-\frac{N-2s}{2}}-\left(1+\frac{1}{c_h^2}\right)^{-\frac{N-2s}{2}}\right]^{2_s^*} \, \dd x\\ 
&\lesssim (c_h^{-\frac{N-2s}{2}})^{\frac{2N}{N-2s}} \int_{\{1-h\leq |x|\leq 1\}} \left(1+\frac{|x|^2}{c_h^2}\right)^{-N} \, \dd x\lesssim c_h^{N}.
\end{split}
\end{equation*}
Since $h<<c_h \lesssim 1$ we obtain 
  $  \|I_h(\Psi_{\lambda_h,c_h,0})\|_{L^{2_s^*}(\RR^N)}\simeq  1$ 
  and the proof is finished.
\end{proof}

\section{Lower Bound in Theorem \ref{thm:main}}
\label{sec:estimatefrombelow}
Let $u_h\in V_h$ be a minimizer for $S_{N,s,h}$ with $\|u_h\|_{L^{2_s^*}(\BB_h)}=1$. Then we get
$$S_{N,s,h}=[u_h]^2_{\dot{H}^s(\RR^N)}.$$ Moreover, using the estimate in the previous section we obtain
\begin{equation}
\label{eq:lowerbound}
h^\alpha\gtrsim S_{N,s,h}-S_{N,s}=\delta(u_h)\gtrsim (d(u_h,\mathcal{M}))^2=\inf_{\lambda,c,X_0} \left[u_h-\Phi_{\lambda,c,X_0}\right]_{\dot{H}^s(\RR^N)}^2.
\end{equation}
In \cite[Lemma 2.2]{konig2023stabilitysobolevinequalityexistence} it is shown that the above infimum is in fact a minimum. Let us denote by $\Phi_{\lambda_h,c_h,X_h}$ the function for which the minimum is achieved.
In the following we will show that 
\begin{equation}
\label{eq:claim-lower-bound}
[u_h-\Phi_{\lambda_{h},c_{h},X_{h}}]^2_{\dot{H}^s(\RR^N)}\gtrsim c^{N-2s}_h + \left(\frac{h}{c_h}\right)^{4-2s}\gtrsim h^\alpha.
\end{equation}
We will  divide the proof in four steps. 

\textbf{Step I}. We prove that $\|\Phi_{\lambda_{h},c_{h},X_{h}}\|_{L^{2_s^*}(\RR^N)}\simeq 1$
and that the sequence $(c_{h})_h$ is uniformly bounded as $h\to 0$.

Using the Fractional Sobolev Inequality (i.e. Proposition \ref{prop:Fractional-Sobolev-Inequality}) for the function $u_h-\Phi_{\lambda_{h},c_{h},X_{h}}$ we get that 
\begin{equation}
\label{eq:use-of-frac-sob-in-lower-bound}
h^{\frac{\alpha}{2}}\gtrsim [u_h-\Phi_{\lambda_{h},c_{h},X_{h}}]_{\dot{H}^s(\RR^N)}\gtrsim \| u_h-\Phi_{\lambda_{h},c_{h},X_{h}}\|_{L^{2_s^*}(\RR^N)}.
\end{equation}
Since $u_h$ is supported in $\BB_h$ we have
$$\| u_h-\Phi_{\lambda_{h},c_{h},X_{h}}\|^{2_s^*}_{L^{2_s^*}(\RR^N)}=\|\Phi_{\lambda_{h},c_{h},X_{h}}\|^{2_s^*}_{L^{2_s^*}(\BB_h^c)}+\|u_h-\Phi_{\lambda_{h},c_{h},X_{h}}\|^{2_s^*}_{L^{2_s^*}(\BB_h)}.$$
This implies
\begin{equation}
\label{eq:2s-norm-phi-on-complement}
\|\Phi_{\lambda_{h},c_{h},X_{h}}\|_{L^{2_s^*}(\BB_h^c)}\lesssim h^{\frac{\alpha }{2}}.
\end{equation} 
Since $\|u_h\|_{L^{2_s^*}(\BB_h)}=1$, we also get that $\|\Phi_{\lambda_{h},c_{h},X_{h}}\|_{L^{2_s^*}(\BB_h)}\simeq 1$ and then $S_{N,s}\simeq [\Phi_{\lambda_{h},c_{h},X_{h}}]_{\dot{H}^s(\RR^N)}^2$.
The  $L^{2_s^{*}}(\RR^N)-$norm of $\Phi_{\lambda_{h},c_{h},X_{h}}$ satisfies
\begin{equation}\label{eq:4}
1\simeq \|\Phi_{\lambda_{h},c_{h},X_{h}}\|^{2_s^{*}}_{L^{2_s^{*}}(\RR^N)}\sim |\lambda_{h}|^{\frac{2N}{N-2s}} c_{h}^N.
\end{equation}
Thus, $|\lambda_{h}|\sim c_{h}^{-\frac{N-2s}{2}}$.
Using that $\|\Phi_{\lambda_{h},c_{h},X_{h}}\|_{L^{2_s^*}(\BB_h)}\simeq 1$ we obtain 
\begin{equation*}
1\simeq \int_{\BB_h} |\lambda_{h}|^{\frac{2N}{N-2s}} \left(1+\frac{|x-X_{h}|^2}{c^2_{h}}\right)^{-N} \, \dd x\lesssim c^{-N}_{h} \int_{\BB_h} 1 \, \dd x\lesssim c^{-N}_{h}.
\end{equation*}
Thus, there exists $C_0>0$ such that $c_{h}\leq C_0$ for any $h<<1$.

\textbf{Step II}. We prove that $X_{h}\in \BB_h$.

If $X_{h}\notin \BB_h$, then, since $\BB_h$ is convex, there exists a half-space, $\mathcal{H}\coloneqq\{x\in \RR^N: (x-X_{h})w\geq 0\}$ for some $w\in\RR^N,\ w\neq 0$, that passes through $X_{h}$ and which is contained in $\BB_h^c$. Using \eqref{eq:2s-norm-phi-on-complement} we get:
 \begin{align*}
h^{\frac{N\alpha}{N-2s}}&\gtrsim \int_{\BB_h^c} |\Phi_{\lambda_{h},c_{h},X_{h}}(x)|^{\frac{2N}{N-2s}} \, \dd x\\
&\gtrsim \int_{\mathcal{H}} |\lambda_{h}|^{\frac{2N}{N-2s}} \left(1+\frac{|x-X_{h}|^2}{c_{h}^2}\right)^{-N} \, \dd x\\
&\gtrsim c_{h}^{N} \int_{\mathcal{H}_0}  \frac{1}{\left(c_{h}^2+|x|^2\right)^N} \, \dd x\\
&\gtrsim \int_{\mathcal{H}_0 } \frac{1}{(1+|x|^2)^N}\, \dd x\gtrsim 1.
\end{align*}
where $\mathcal{H}_0\coloneqq \{x\in \RR^N: xw\geq 0\}$ is the half-space passing through the origin.
As a consequence $X_{h}\in \BB_h$.

\textbf{Step III}. We prove that the sequence $(\frac{h}{c_{h}})_h$ is uniformly bounded. 

Let us assume that, up to a subsequence (denoted the same), $\lim_{h\to 0} \frac{h}{c_{h}}=\infty$.
By \eqref{eq:use-of-frac-sob-in-lower-bound} we get that 
\begin{equation}
\label{eq:step-3}
h^{\frac{\alpha}{2}}\gtrsim \|u_h-\Phi_{\lambda_{h},c_{h},X_{h}}\|_{L^{2_s^*}(\BB_h)}.
\end{equation}

Since $X_{h}\in\BB_h$, there exists a triangle $T_{0,h}\in \mathcal{T}_h$ such that $X_{h}\in T_{0,h}$. 
Since $h>>c_h$, without loss of generality we can assume that $B_{c_{h}}(X_{h})\subset T_{0,h}$. If this inclusion does not hold, then we can consider the intersection $B_{c_{h}}(X_{h}) \cap T_{0,h}$, which still contains a fixed proportion of the ball. By the mesh regularity assumption, this proportion is independent of $h$.

%
%
We use Lemma \ref{lem:interpolationoncubes} for $\Omega=Q_{0,h}$, where $Q_{0,h}$ denotes the maximal cube inscribed in the ball $B_{c_h}(X_h)$. Note that the side-length of $Q_{0,h}$ is equal to $\frac{2c_{h}}{\sqrt{N}}$.


Taking $p=2_s^{*}$ and $u\coloneqq u_h-\Phi_{\lambda_{h},c_{h},X_{h}}$ in the above-mentioned Lemma and using the fact that $u_h$ is linear on $T_{0,h}$ we get 
\begin{equation}
\label{eq:24}
\begin{split}
     \|D(u_h-\Phi_{\lambda_{h},c_h,X_{h}})\|_{L^{2_s^*}(Q_{0,h})} & \leq  \tilde{C} c_{h}^{-1} \|u_h-\Phi_{\lambda_{h},c_{h},X_{h}}\|_{L^{2_s^*}(Q_{0, h})}\\
     & +\tilde{C} \|u_h-\Phi_{\lambda_{h},c_{h},X_{h}}\|^{\frac{1}{2}}_{L^{2_s^*}(Q_{0,h})} \|D^2 \Phi_{\lambda_{h},c_{h},X_{h}}\|^{\frac{1}{2}}_{L^{2_s^*}(Q_{0,h})}
     \end{split}
\end{equation}
where $\tilde{C}>0$ is a constant which depends only on $N,s,\sigma,\rho$, but not on $h$.

Using estimate \eqref{second.derivative} and the fact that $|\lambda_{h}|\sim c^{-\frac{N-2s}{2}}_{h}$ we get that 
\begin{equation*}
\begin{split}
\|D^2\Phi_{\lambda_{h},c_{h},X_{h}}\|^{2_s^*}_{L^{2_s^*}(Q_{0,h})}&\lesssim \Big(\frac{|\lambda_h|}{c^2_h}\Big)^{2_s^*} \int_{Q_{0,h}} \left(1+\frac{|x-X_{h}|^2}{c_{h}^2}\right)^{-\frac{N(N-2s+2)}{N-2s}} \, \dd x\\
&\lesssim \Big(\frac{|\lambda_h|}{c^2_h}\Big)^{2_s^*} \int_{B_{c_h}(X_h)} \left(1+\frac{|x-X_{h}|^2}{c_{h}^2}\right)^{-\frac{N(N-2s+2)}{N-2s}} \, \dd x\\
& \lesssim \Big(\frac{|\lambda_h|}{c^2_h}\Big)^{2_s^*} c_h^{\frac{2N(N-2s+2)}{N-2s}} \int_{0}^{c_{h}} \frac{\sigma^{N-1}}{(c^2_{h}+\sigma^2)^{\frac{N(N-2s+2)}{N-2s}}} \, \dd \sigma\lesssim c_h^{-\frac{4N}{N-2s}}.
\end{split}
\end{equation*}
Thus,
\begin{equation}
\label{eq:28}
\|D^2\Phi_{\lambda_{h},c_{h},X_{h}}\|_{L^{2_s^*}(Q_{0,h})}\leq \tilde{C}_1 c_{h}^{-2}
\end{equation}
where $\tilde{C}_1$ is a positive constant which does not depend on $h$.

Since $Du_h$ is constant on $T_{0,h}$ and $Q_{0,h}\supset B_{\frac{c_h}{2\sqrt{N}}}(X_h)$ we can apply Lemma \ref{lem:finitecovering} and \cite[Lemma 2.2]{ignat2025optimalconvergenceratesfinite} to get:
\begin{equation*}
\begin{split}
\|D(u_h-\Phi_{\lambda_{h},c_{h},X_{h}})\|^{2_s^*}_{L^{2_s^*}(Q_{0,h})} &\geq \|D(u_h-\Phi_{\lambda_{h},c_{h},X_{h}})\|^{2_s^*}_{L^{2_s^*}\left(B_{\frac{c_h}{2\sqrt{N}}}(X_{h})\right)}\\
    &\geq \inf_{A_0\in \RR^N} \int_{B_{\frac{c_h}{2\sqrt{N}}}(X_{h})} |A_0-D\Phi_{\lambda_{h},c_{h},X_{h}} (x)|^{2_s^*} \, \dd x\\
    & \gtrsim c_{h}^{N+\frac{2N}{N-2s}} \Big(\frac{|\lambda_h|}{c^2_h}\Big)^{2_s^*} \min_{x\in \overline{B}_{\frac{c_h}{2\sqrt{N}}}(X_{h})}\left\{\left(1+\frac{|x-X_{h}|^2}{c_{h}^2}\right)^{-\frac{N(N-2s+2)}{N-2s}}\right\}\\
    & \gtrsim c^{-\frac{2N}{N-2s}}_{h}.
    \end{split}
\end{equation*}
Thus, there exists a positive constant $\tilde{C}_2$ which does not depend on $h$ such that
\begin{equation}
\label{eq:30}
\|D(u_h-\Phi_{\lambda_{h},c_{h},X_{h}})\|_{L^{2_s^*}(Q_{0,h})}\geq \tilde{C}_2 c^{-1}_{h}.
\end{equation}

From estimates \eqref{eq:24}, \eqref{eq:28} and \eqref{eq:30} we get the following quadratic inequality
\begin{equation}
   \tilde{C} b^2+\tilde{C}\tilde{C}_1^{1/2} b-\tilde{C}_2 \geq 0
\end{equation}
where $b\coloneqq \|u_h-\Phi_{\lambda_{h},c_{h},X_{h}}\|^{\frac{1}{2}}_{L^{2_s^*}(Q_{0,h})}\lesssim h^{\frac{\alpha}{4}}$.
Then
\begin{equation}
h^{\frac{\alpha}{4}}\gtrsim \tilde{C}b^2+\tilde{C}\tilde{C}_1^{1/2}b\geq \tilde{C}_2,
\end{equation}
which is false. Thus, there exists a positive constant $C_1$ such that $\frac{h}{c_{h}}\leq C_1$ for any $h\in (0,1)$ sufficiently small.

\textbf{Step IV}. We will prove the following estimates
\begin{equation}
\label{eq:26} 
I_{1h}\coloneqq\int_{\RR^N\setminus\BB_h}\int_{\RR^N\setminus\BB_h} \frac{|\Phi_{\lambda_{h},c_{h},X_{h}}(x)-\Phi_{\lambda_{h},c_{h},X_{h}}(y)|^2}{|x-y|^{N+2s}} \, \dd x \, \dd y\gtrsim c^{N-2s}_{h}
\end{equation}
and
 \begin{equation}
 \label{eq:17}
I_{2h}\coloneqq \int_{\BB_h}\int_{\BB_h} \frac{|(u_h-\Phi_{\lambda_{h},c_{h},X_{h}})(x)-(u_h-\Phi_{\lambda_{h},c_{h},X_{h}})(y)|^2}{|x-y|^{N+2s}} \, \dd x\, \dd y\gtrsim \left(\frac{h}{c_{h}}\right)^{4-2s}. 
\end{equation}
These give us the desired lower bound since
\[
[u_h-\Phi_{\lambda_h,c_h,X_h}]^2_{\dot{H}^s(\RR^N)}\geq I_{1h}+I_{2h}\gtrsim c^{N-2s}_{h}+\left(\frac{h}{c_{h}}\right)^{4-2s}\gtrsim h^\alpha. 
\]

In the following we will prove the above inequalities. Choosing, if needed, a smaller value of $h$, we can assume that $$\frac{h}{c_{h}}\leq C_1<\frac{1}{10C_0}\leq \frac{1}{10c_{h}}.$$

\textbf{Case 1. Proof of \eqref{eq:26}.}
To bound the integral on $(\RR^N\setminus \BB_h)\times (\RR^N\setminus \BB_h)$ we use the fact that the sequence $(c_{h})_h$ is uniformly bounded by $C_0$ and the fact that $X_{h}\in \BB_h$, so that $\RR^N\setminus B_{2}(X_{h})\subseteq \RR^N\setminus\BB_h$. We obtain:
\begin{equation*}
\begin{split}
& \int_{\RR^N\setminus \BB_h}\int_{\RR^N\setminus\BB_h} \frac{|\Phi_{\lambda_{h},c_{h},X_{h}}(x)-\Phi_{\lambda_{h},c_{h},X_{h}}(y)|^2}{|x-y|^{N+2s}} \, \dd x \, \dd y\\
 & \gtrsim c_{h}^{N-2s}\int_{\RR^N\setminus B_{2}(X_{h})} \int_{\RR^N\setminus B_{2}(X_{h})}\left|\frac{1}{\left(c^2_{h}+|x-X_{h}|^2\right)^{\frac{N-2s}{2}}}-\frac{1}{\left(c^2_{h}+|y-X_{h}|^2\right)^{\frac{N-2s}{2}}}\right|^2 \frac{\, \dd x\, \dd y}{|x-y|^{N+2s}}\\
 &\gtrsim c^{N-2s}_{h} \int_{\RR^N\setminus B_{2}(0)} \int_{\RR^N\setminus B_{2}(0)} \left|\frac{1}{\left(c^2_{h}+|x|^2\right)^{\frac{N-2s}{2}}}-\frac{1}{\left(c^2_{h}+|y|^2\right)^{\frac{N-2s}{2}}}\right|^2\frac{ \, \dd x\, \dd y}{|x-y|^{N+2s}}\\
 & \gtrsim c^{N-2s}_{h} \int_{\left(\RR^N\setminus B_{2}(0)\right)^2} \left|\frac{1}{\left(c^2_{h}+|x|^2\right)^{\frac{N-2s}{2}}}-\frac{1}{\left(c^2_{h}+|y|^2\right)^{\frac{N-2s}{2}}}\right|^2\frac{ \, \dd x\, \dd y}{(|x|+|y|)^{N+2s}}\\ 
 & \gtrsim  c^{N-2s}_{h} \int_{\max\{2,C_0\}}^{\infty} \int_{\max\{2,C_0\}}^{\infty} \frac{\sigma^{N-1} t^{N-1}}{(\sigma+t)^{N+2s}} \left|\frac{1}{(c^2_{h}+\sigma^2)^{\frac{N-2s}{2}}}-\frac{1}{(c^2_{h}+t^2)^{\frac{N-2s}{2}}}\right|^2 \, \dd t \, \dd \sigma.
\end{split}
\end{equation*}

Let $g:[\max\{2,C_0\},\infty)\rightarrow\RR,\ g(p)=\left(c_{h}^2+p^2\right)^{-\frac{N-2s}{2}}$. Then $g'(p)=-(N-2s)\left(c_{h}^2+p^2\right)^{-\frac{N-2s+2}{2}} p$ and $$|g(\sigma)-g(t)|=\left|(N-2s)\int_{\sigma}^{t} p\left(c_{h}^2+p^2\right)^{-\frac{N-2s+2}{2}} \, \dd p\right|\gtrsim \left|\int_{\sigma}^{t} \frac{p}{p^{N-2s+2}} \,\dd p\right|\geq \left|\frac{1}{\sigma^{N-2s}}-\frac{1}{t^{N-2s}}\right|\geq 0.$$

Thus, we obtain:
\begin{equation}
\begin{split}
I_{1h}& \gtrsim c^{N-2s}_{h} \int_{\max\{2,C_0\}}^{\infty} \int_{\max\{2,C_0\}}^{\infty} \frac{\sigma^{N-1}t^{N-1}}{(\sigma+t)^{N+2s}} \left|\frac{1}{\sigma^{N-2s}}-\frac{1}{t^{N-2s}}\right|^2 \, \dd \sigma \, \dd t\gtrsim c^{N-2s}_{h}.
\end{split}
\end{equation}

\textbf{Case 2. Proof of \eqref{eq:17}.} Observe that
\[
I_{2h}\gtrsim  \sum_{T\in \mathcal{T}_h} \int_{T}\int_{T} \frac{|(u_h-\Phi_{\lambda_{h},c_{h},X_{h}})(x)-(u_h-\Phi_{\lambda_{h},c_{h},X_{h}})(y)|^2}{|x-y|^{N+2s}} \, \dd x \, \dd y.
\]
We denote $M\coloneqq \max\{C_1,1\}$. We will neglect the integrals on $T\times T$ for which the triangle $T$ has an empty intersection with the set $\{x\in \RR^N: |x-X_{h}|\geq 2M c_h\}$. The main tools that we will use to bound these integrals are Lemma \ref{lem:interpolationoncubes} and Poincar\'{e}'s Inequality for fractional Sobolev spaces in Theorem \ref{thm:Poincareinequality}.

Using Poincar\'{e}'s Inequality  \ref{thm:Poincareinequality} with $p=2$, $E=\Omega=T\in\mathcal{T}_h$ and $u=u_h-\Phi_{\lambda_{h},c_{h},X_{h}}$, we get that 
\begin{equation}
\label{eq:25}
\begin{split}
 & \int_{T}\int_{T} \frac{|(u_h-\Phi_{\lambda_{h},c_{h},X_{h}})(x)-(u_h-\Phi_{\lambda_{h},c_{h},X_{h}})(y)|^2}{|x-y|^{N+2s}} \, \dd x \, \dd y\\
& \geq \frac{|T|}{h_T^{N+2s}} \int_{T} |(u_h-\Phi_{\lambda_{h},c_{h},X_{h}})(x)-(u_h-\Phi_{\lambda_{h},c_{h},X_{h}})_{T}|^2 \, \dd x\\
&\gtrsim h^{-2s} \int_{Q_T} |(u_h-\Phi_{\lambda_{h},c_{h},X_{h}})(x)-(u_h-\Phi_{\lambda_{h},c_{h},X_{h}})_{T}|^2 \, \dd x,
\end{split}
\end{equation}
where $(u_h-\Phi_{\lambda_{h},c_{h},X_{h}})_{T}\coloneqq A_T\in \RR$ is the mean of $u_h-\Phi_{\lambda_{h},c_{h},X_{h}}$ on $T$ and $Q_T$ is the maximal inscribed cube in the inscribed sphere of $T$. Note that, by the regularity and the quasi-uniformity assumptions on the mesh, the side-length of $Q_T$ is of order $\mathcal{O}(h)$.

We denote by $\mathcal{T}_{h,1}$ the set of the triangles $T\in \mathcal{T}_h$ which have a nonempty intersection with the set $\{x\in \RR^N: |x-X_{h}|\geq 2M c_h\}$. Any triangle $T\in\mathcal{T}_{h,1}$ is included in the exterior of the ball $B_{2M c_h-h}(X_{h})$ and for any $x\in T\in \mathcal{T}_{h,1}$ we have that:
\begin{equation*}
\max_{\overline{x}\in T} |\overline{x}-X_{h}|\leq |x-X_{h}|+h_T\leq |x-X_{h}|+h\leq 2|x-X_{h}|,
\end{equation*}
since $2M c_h\geq 2C_1 c_h\geq 2h$, and
\begin{equation*}
    |x-X_{h}|\leq \max_{\overline{x}\in T} |\overline{x}-X_{h}|\leq 2\min_{\underline{x}\in T} |\underline{x}-X_{h}|\leq 2|x-X_{h}|.
\end{equation*}
Note that the maximal ball inscribed in the cube $Q_T$, denoted by $\BB_Q$, has radius of order $\mathcal{O}(h)$. Using \cite[Lemma 2.2]{ignat2025optimalconvergenceratesfinite} and Lemma \ref{lem:finitecovering} we obtain for any $T\in\mathcal{T}_{h,1}$:
\begin{equation}
\begin{split}
  \int_{Q_T} |D(u_h-\Phi_{\lambda_{h},c_{h},X_{h}})(x)|^2 \, \dd x &\gtrsim\int_{\BB_Q} |D(u_h-\Phi_{\lambda_{h},c_{h},X_{h}})(x)|^2 \, \dd x \\
 &\gtrsim  h^{N+2} c_{h}^{-(N-2s+4)} \min_{x\in \overline{\BB}_Q} \left(1+\frac{|x-X_{h}|^2}{c_{h}^2}\right)^{-(N-2s+2)}\\
&\gtrsim h^{N+2} c_{h}^{-(N-2s+4)} \min_{x\in T} \left(1+\frac{|x-X_{h}|^2}{c_{h}^2}\right)^{-(N-2s+2)}\\
 & \gtrsim h^2 c_{h}^{-(N-2s+4)} \int_{T} \left(1+\frac{|x-X_{h}|^2}{c_{h}^2}\right)^{-(N-2s+2)} \, \dd x.
  \end{split}
\end{equation}
Denoting $f(x)=\left(1+\frac{|x-X_{h}|^2}{c^2_{h}}\right)^{-\frac{N-2s+2}{2}}$ the above inequality can be re-written as
\begin{equation}
\label{eq:34}
\|D(u_h-\Phi_{\lambda_{h},c_{h},X_{h}})\|_{L^2(Q_T)}\geq \tilde{C}_{3} h c^{-\frac{N-2s+4}{2}}_{h} \|f\|_{L^2(T)}
\end{equation}
where $\tilde{C}_3$ is a positive constant which does not depend neither on $h$ and neither on $T$.

Using estimate \eqref{second.derivative} we get that there exists a positive constant $\tilde{C}_4$ which does not depend neither on $h$ and neither on $T$, such that, for any $T\in\mathcal{T}_{h,1}$, we have: 
\begin{equation}
\label{eq:35}
    \|D^2\Phi_{\lambda_{h},c_{h},X_{h}}\|^{\frac{1}{2}}_{L^2(Q_T)}\leq \tilde{C}_4 c_{h}^{-\frac{N-2s+4}{4}} \|f\|^{\frac{1}{2}}_{L^2(T)}.
\end{equation}


We use Lemma \ref{lem:interpolationoncubes} with $\Omega=Q_T$, $p=2$ and $u\coloneqq u_h-\Phi_{\lambda_{h},c_{h},X_{h}}-A_T$. Estimates \eqref{eq:34} and \eqref{eq:35} show that, for any $T\in \TT_{h,1}$:
\begin{equation*}
\begin{split}
\tilde{C}h^{-1} \|u_h-\Phi_{\lambda_{h},c_{h},X_{h}}-A_T\|_{L^2(Q_T)} & + \tilde{C} \|u_h-\Phi_{\lambda_{h},c_{h},X_{h}}-A_T\|^{\frac{1}{2}}_{L^2(Q_T)} \tilde{C}_4 c_{h}^{-\frac{N-2s+4}{4}} \|f\|^{\frac{1}{2}}_{L^2(T)}\\
& - \tilde{C}_{3} h c^{-\frac{N-2s+4}{2}}_{h} \|f\|_{L^2(T)}\geq 0
\end{split}
\end{equation*}
 In order to solve the above quadratic inequality, we denote $\tilde{b}\coloneqq \|u_h-\Phi_{\lambda_{h},c_{h},X_{h}}-A_T\|^{\frac{1}{2}}_{L^2(Q_T)}> 0$. Thus, the inequality can be re-written as:
\begin{equation*}
\tilde{C}h^{-1}\tilde{b}^2+\tilde{C}\tilde{C}_4 c_{h}^{-\frac{N-2s+4}{4}}   \|f\|^{\frac{1}{2}}_{L^2(T)} \tilde{b}- \tilde{C}_{3} h c^{-\frac{N-2s+4}{2}}_{h} \|f\|_{L^2(T)}\geq 0.
\end{equation*}
The discriminant is 
\begin{equation*}
\Delta=c_{h}^{-\frac{N-2s+4}{2}} \|f\|_{L^2(T)} (\tilde{C}\tilde{C_4})^2+c_{h}^{-\frac{N-2s+4}{2}} \|f\|_{L^2(T)} 4 \tilde{C}\tilde{C}_3=\tilde{C}_6 c_{h}^{-\frac{N-2s+4}{2}} \|f\|_{L^2(T)},
\end{equation*} 
where $\tilde{C}_6=(\tilde{C}\tilde{C_4})^2+4 \tilde{C}\tilde{C}_3>(\tilde{C}\tilde{C}_4)^2>0$.
Since $\tilde{b}> 0$, we get that \[\tilde{b}\geq h c_{h}^{-\frac{N-2s+4}{4}}   \|f\|^{\frac{1}{2}}_{L^2(T)} \frac{\sqrt{\tilde{C}_6}-\tilde{C}\tilde{C_4}}{2\tilde{C}}.\] Thus,  
\begin{equation*}
\|u_h-\Phi_{\lambda_{h},c_{h},X_{h}}-A_T\|^2_{L^2(Q_T)}\gtrsim h^4 c^{-(N-2s+4)}_{h} \|f\|^2_{L^2(T)}.
\end{equation*}
Summing over $T\in\mathcal{T}_{h,1}$ and using \eqref{eq:25} we get:
 \begin{equation*}
 \begin{split}
I_{2h} &= \int_{\BB_h}\int_{\BB_h} \frac{|(u_h-\Phi_{\lambda_{h},c_{h},X_{h}})(x)-(u_h-\Phi_{\lambda_{h},c_{h},X_{h}})(y)|^2}{|x-y|^{N+2s}} \, \dd x\, \dd y\\
 & \gtrsim h^{4-2s}c^{-(N-2s+4)}_{h} \sum_{T\in\mathcal{T}_{h,1}} \|f\|^2_{L^2(T)}\\
 & \gtrsim h^{4-2s}c^{-(N-2s+4)}_{h} \int_{2M c_h\leq |x-X_{h}|,|x|<1-h} \left(1+\frac{|x-X_{h}|^2}{c^2_{h}}\right)^{-(N-2s+2)} \, \dd x.
 \end{split}
 \end{equation*}

If $|X_{h}|<\frac{1}{2}$, then the set $\{x\in\RR^N: 2M c_h\leq |x-X_{h}|<\frac{1}{4}\}$ is included in $\omega_h\coloneqq\{x\in \RR^N: 2M c_h\leq |x-X_{h}|, |x|<1-h\}\subset \BB_h$, since $|x|\leq |X_{h}|+|x-X_{h}|<\frac{1}{2}+\frac{1}{4}<1-h$. This implies that 
\begin{align*}
   I_{2h}
     & \gtrsim h^{4-2s} c_{h}^{-(N-2s+4)} \int_{2M c_h\leq |x-X_{h}|<\frac{1}{4}} \left(1+\frac{|x-X_{h}|^2}{c_{h}^2}\right)^{-(N-2s+2)} \, \dd x\\
     & \gtrsim h^{4-2s} c_{h}^{-(N-2s+4)} \int_{2M c_h}^{\frac{1}{4}} \sigma^{N-1} \left(1+\frac{\sigma^2}{c_{h}^2}\right)^{-(N-2s+2)}\, \dd \sigma \\
 & \gtrsim h^{4-2s} c_{h}^{-(4-2s)} \int_{2M}^{\frac{1}{4c_{h}}} \xi^{N-1}(1+\xi^2)^{-(N-2s+2)} \, \dd \xi\\
  & \gtrsim \frac{h^{4-2s}}{c_{h}^{4-2s}} \int_{2M}^{4M} \xi^{N-1}(1+\xi^2)^{-(N-2s+2)} \, \dd \xi\gtrsim \left(\frac{h}{c_{h}}\right)^{4-2s}.
 \end{align*}

If $|X_{h}|\geq \frac{1}{2}$, it may happen that $X_{h}$ lies close to the boundary of $\BB_h$. Even in the worst-case scenario, we can always construct a cone centered at $X_{h}$ that is entirely contained in the set $\omega_h$. Without loss of generality, assume that $X_{h}=(x_{1,h},0')\in \RR\times \RR^{N-1}$ with $\frac{1}{2}\leq x_{1,h}\leq 1$. For $\beta<\frac{1}{2}$ we get that $$\left\{x=(x_1,x')\in\RR\times \RR^{N-1}: |x'|\leq \beta |x_{1,h}-x_{1}|, 2M c_h\leq x_{1,h}-x_{1}\leq \frac{1}{2}\right\}\subset \omega_h.$$ Indeed, in this case,  
\begin{align*}
   |x|&\leq |x_1|+\beta(x_{1,h}-x_{1})=\beta x_{1,h}+(1-\beta) x_1\\
   &\leq \beta x_{1,h}+(1-\beta)(x_{1,h}-2M c_h)=x_{1,h}-2M c_h (1-\beta)<1-h. 
\end{align*} 
Denoting $$\omega'_{h}\coloneqq\Big\{y=(y_1,y')\in\RR\times \RR^{N-1}: |y'|\leq \beta |y_1|, 2M c_h\leq y_1\leq \frac{1}{2}\Big\},$$ we obtain, after a change of variables, that 

\begin{align*}
   I_{2h}&\gtrsim h^{4-2s} c_{h}^{-(N-2s+4)} \int_{x\in \omega'_{h}} \left(1+\frac{|x|^2}{c^2_{h}}\right)^{-(N-2s+2)} \, \dd x\\
    & \gtrsim h^{4-2s} c_{h}^{-(4-2s)} \int_{y\in \frac{1}{c_{h}}\omega'_{h}} (1+|y|^2)^{-(N-2s+2)} \, \dd y \gtrsim \left(\frac{h}{c_{h}}\right)^{4-2s},
     \end{align*}
where we have used the fact that the set $\frac{1}{c_{h}} \omega'_h$ contains a set of positive measure, measure independent of $h$,
\begin{equation}
\begin{split}
\Big\{y=(y_1,y')&\in\RR\times \RR^{N-1}: |y'|\leq \beta |y_1|, 2M\leq y_1\leq \frac{1}{2C_0}\Big\}\\
& \subset \Big\{y=(y_1,y')\in\RR\times\RR^{N-1}:|y'|\leq \beta |y_1|, 2M \leq y_1 \leq \frac{1}{2c_{h}}\Big\} \subset \frac{1}{c_{h}}\omega'_h.
\end{split}
\end{equation}
This finishes the proof of estimate \eqref{eq:17}. 



\appendix

\section{Classical inequalities} We recall here some classical inequalities that we used along the paper. Even though they are classical for completeness we prefer to include them. 

The first inequality is the Homogeneous Gagliardo-Nirenberg Interpolation Inequality  \cite{JVSchaftingen2023}, \cite[Theorems 7.41, 8.29]{GLeoni2023}.
\begin{theorem}
\label{thm:IntIneq}
 Let $\Omega\subseteq \RR^N$ be an open and convex set and $0\leq s_0<s<s_1\leq 1$, $1\leq p,k,q\leq\infty$ if $\Omega=\RR^N$ or else $1<p,k,q<\infty$, and $0<\theta<1$ be such that $$s=(1-\theta) s_0+\theta s_1$$ and $$\frac{1}{k}=\frac{1-\theta}{q} + \frac{\theta}{p}.$$ 
 If $\Omega=\RR^N$ we also assume that at least one of the following statements is true:
 \begin{enumerate} 
    \item $s_1\notin \mathbb{Z}$;
     \item $p>1$;
     \item \label{item:s1-s2-big} $s_1-s_0>1-\frac{1}{q}$.
 \end{enumerate}
 For all $u\in \dot{W}^{s_0,q}(\Omega)\cap \dot{W}^{s_1,p}(\Omega)$ we have that $u\in \dot{W}^{s,k}(\Omega)$ and 
 \begin{equation}\label{eq:InterpolationIneq}
      \|u\|_{\dot{W}^{s,k}(\Omega)}\lesssim \|u\|_{\dot{W}^{s_0,q}(\Omega)}^{1-\theta} \|u\|_{\dot{W}^{s_1,p}(\Omega)}^\theta.
 \end{equation}
\end{theorem}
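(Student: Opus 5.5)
The plan is to work throughout with the difference-quotient form of the seminorms. For $0<\tau<1$ and $1\le r<\infty$ I take
\[
\|u\|^r_{\dot W^{\tau,r}(\Omega)}=\int_{\RR^N}\frac{\|\Delta_z u\|^r_{L^r(\Omega_z)}}{|z|^{N+\tau r}}\,\dd z,
\]
where $\Delta_z u(x)=u(x+z)-u(x)$ and $\Omega_z=\Omega\cap(\Omega-z)$. By convention $\dot W^{0,r}=L^r$ and $\dot W^{1,r}$ is the seminorm $\|Du\|_{L^r}$. The case $\Omega=\RR^N$ with some exponent equal to $\infty$ is handled the same way, with the usual sup-modifications. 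I split into an interior case $0<s_0<s_1<1$ and the endpoint cases $s_0=0$ or $s_1=1$. The hypotheses (convexity, $1<p,k,q<\infty$, and conditions (1)--(3)) are needed only at the endpoints.

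\textbf{Interior case $0<s_0<s_1<1$.} This is a double H\"older argument. For fixed $z$, H\"older in $x$ with $\frac1k=\frac{1-\theta}{q}+\frac{\theta}{p}$ gives
\[
\|\Delta_z u\|_{L^k(\Omega_z)}\le\|\Delta_z u\|_{L^q(\Omega_z)}^{1-\theta}\,\|\Delta_z u\|_{L^p(\Omega_z)}^{\theta}.
\]
I then write
\[
|z|^{-N-sk}=\bigl(|z|^{-N-s_0q}\bigr)^{\frac{k(1-\theta)}{q}}\bigl(|z|^{-N-s_1p}\bigr)^{\frac{k\theta}{p}}.
\]
The exponents match because $Nk\bigl(\frac{1-\theta}q+\frac\theta p\bigr)=N$ and $k(1-\theta)s_0+k\theta s_1=ks$. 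Applying H\"older in $z$ with the conjugate pair $\frac{q}{k(1-\theta)}$, $\frac{p}{k\theta}$, whose reciprocals sum to $1$, yields \eqref{eq:InterpolationIneq} directly. No geometric hypothesis on $\Omega$ is used here.

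\textbf{Endpoint $s_1=1$.} On a convex $\Omega$ the segment $[x,x+z]$ lies in $\Omega$, so $\|\Delta_z u\|_{L^p(\Omega_z)}\le|z|\,\|Du\|_{L^p(\Omega)}$. This is the only place where convexity enters. If $s_0>0$, I substitute this bound into the interior argument, which then runs unchanged.

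\textbf{Endpoint $s_0=0$.} The interior computation fails, because the weight $|z|^{-N}$ is not integrable at both $0$ and $\infty$. Here I switch to a pointwise argument.
\begin{itemize}
\item For $s_1=1$, I use the convex-domain version of the maximal-function estimate:
\[
|u(x)-u(y)|\lesssim|x-y|\bigl(\mathcal M Du(x)+\mathcal M Du(y)\bigr),\quad x,y\in\Omega .
\]
Here $\mathcal M$ is the maximal operator restricted to $\Omega$, which is bounded on $L^p(\Omega)$ for $p>1$ since convex sets have the cone/doubling property on each ball meeting them.
\item I combine this with the trivial bound $|u(x)-u(y)|\le|u(x)|+|u(y)|$.
\item By symmetry it suffices to control the term carrying $x$. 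Splitting the $z$-integral at $|z|=R(x)=|u(x)|/\mathcal M Du(x)$ gives
\[
\int\min\bigl(|u(x)|,|z|\mathcal MDu(x)\bigr)^k\frac{\dd z}{|z|^{N+sk}}\lesssim|u(x)|^{k(1-s)}\bigl(\mathcal MDu(x)\bigr)^{ks}.
\]
\item H\"older in $x$ with exponents $\frac{q}{k(1-s)}$, $\frac{p}{ks}$, followed by $L^p$-boundedness of $\mathcal M$, finishes the proof.
\end{itemize}
For $0=s_0<s_1<1$ I run the same min-splitting, but with $\min\bigl(\|\Delta_zu\|,\ \cdot\ \bigr)$ estimated dyadically in $|z|$. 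Concretely, on the shell $|z|\sim2^j$ I bound $\|\Delta_z u\|_{L^k}$ by H\"older between $2\|u\|_{L^q}$ and $\|\Delta_zu\|_{L^p}$. I then sum the dyadic pieces with weights $2^{-jsk}$, choosing the switching scale $2^{j_0}\sim\bigl(\|u\|_{L^q}/[u]_{\dot W^{s_1,p}}\bigr)^{1/s_1}$. This is a discrete Hardy/real-interpolation argument, and the resulting constants are geometric sums because $s<s_1$.

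\textbf{Main obstacle.} I expect the endpoint $s_0=0$ to be the hard part, above all when $s_1=1$ and $p=1$ on $\RR^N$, where the maximal function is not bounded on $L^1$. On $\RR^N$ this is exactly what conditions (1)--(3) exclude or compensate for:
\begin{itemize}
\item if $s_1\notin\mathbb Z$ the dyadic argument applies;
\item if $p>1$ the maximal-function argument applies;
\item under (3), $s_1-s_0>1-\frac1q$, I would first try to avoid $\mathcal M$ altogether. The plan is to estimate $\|\Delta_zu\|_{L^1}\le|z|\|Du\|_{L^1}$ and $\|\Delta_zu\|_{L^q}\le2\|u\|_{L^q}$, interpolate these to $\|\Delta_zu\|_{L^k}$ via H\"older, and check that the resulting $z$-integral converges at both ends precisely under (3).
\end{itemize}
On a general convex $\Omega$ the restriction $1<p,k,q<\infty$ lets me use $\mathcal M$ throughout. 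If that step proved delicate, I would alternatively extend $u$ from the convex (hence Lipschitz-graph, locally) domain and cite \cite{GLeoni2023,JVSchaftingen2023}.
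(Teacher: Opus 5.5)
Your interior case $0<s_0<s_1<1$ (H\"older in $x$, then H\"older in $z$) is correct, even with constant $1$. Every endpoint step, however, has a genuine gap, and all of them come from the same scaling obstruction.

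\textbf{Endpoint $s_1=1$, $s_0>0$.} The argument does not ``run unchanged''. The H\"older step in $z$ needs the factor $\int\|\Delta_zu\|^p_{L^p}|z|^{-N-p}\,\dd z$. After your bound $\|\Delta_zu\|_{L^p(\Omega_z)}\le|z|\,\|Du\|_{L^p}$, this factor becomes $\|Du\|^p_{L^p}\int|z|^{-N}\,\dd z=\infty$. If you insert the bound before the $z$-H\"older instead, you are left with $\|Du\|_{L^p}^{k\theta}\int\|\Delta_zu\|_{L^q}^{a}|z|^{-N-as_0}\,\dd z$, where $a=k(1-\theta)<q$ for $p<\infty$. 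That is a $\dot B^{s_0}_{q,a}$-type quantity, and $\|u\|_{\dot W^{s_0,q}}$ does not control it, so the chain does not close.

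\textbf{Endpoint $s_0=0$, dyadic argument.} Here $s=\theta s_1$, so H\"older between $2\|u\|_{L^q}$ and $\|\Delta_zu\|_{L^p}$ is exactly scale invariant. On the shell $|z|\sim 2^j$ it gives $\|u\|_{L^q}^{k(1-\theta)}a_j^{k\theta/p}$, where $a_j$ is the shell portion of $[u]^p_{\dot W^{s_1,p}}$ and $k\theta/p<1$. There is no geometric factor on either side of any switching scale. For large $|z|$ you have no bound on $\|\Delta_zu\|_{L^k}$ in terms of $\|u\|_{L^q}$ alone, since $u$ need not lie in $L^k$. Moreover, $\sum_j a_j^{k\theta/p}$ is not bounded by $(\sum_j a_j)^{k\theta/p}$.

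\textbf{Your plan under (3).} It fails the same way. Combining $\|\Delta_zu\|_{L^1}\le|z|\|Du\|_{L^1}$ with $\|\Delta_zu\|_{L^q}\le 2\|u\|_{L^q}$ by H\"older produces the weight $|z|^{k\theta-N-sk}=|z|^{-N}$. This diverges at both ends for every $q$, so condition (3) never actually enters. It is the sharp Brezis--Mironescu threshold and needs genuinely different tools. A min-splitting in $z$ works only when both bounds control the same norm of $\Delta_zu$, i.e.\ when $p=q=k$.

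\textbf{The maximal-function route.} This is the right idea when $p,q>1$, but the reduction ``by symmetry'' to the term carrying $x$ is invalid. From $|u(x)-u(y)|\le\min\{|u(x)|+|u(y)|,\,|z|(\mathcal MDu(x)+\mathcal MDu(y))\}$ you cannot pass to $\min\{|u(x)|,|z|\mathcal MDu(x)\}+\min\{|u(y)|,|z|\mathcal MDu(y)\}$. To see this, take $|u(x)|$ and $\mathcal MDu(y)$ large and the other two quantities small. The resulting cross terms, such as $\min\{|u(x)|,|z|\mathcal MDu(x+z)\}$, cannot be handled by your splitting at $R(x)$.

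The standard repair compares $u(x)$ and $u(y)$ through averages over balls of radius $\sim|x-y|$. This gives $|u(x)-u(y)|\lesssim\min\{|x-y|\mathcal MDu(x),\mathcal Mu(x)\}+\min\{|x-y|\mathcal MDu(y),\mathcal Mu(y)\}$, and then one uses $\mathcal Mu\in L^q$ with $q>1$. When $s_1<1$ or $s_0>0$, analogous fractional quantities replace $\mathcal MDu$ or $\mathcal Mu$. On a convex $\Omega$ this requires relative balls $B\cap\Omega$ and some care with their measure. Even after this repair, the whole-space cases with $q=1$ or $p=1$ remain unproved, and these are exactly the cases where conditions (1)--(3) matter.

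\textbf{Comparison with the paper.} The paper gives no proof at all; it cites \cite{JVSchaftingen2023} and \cite[Theorems 7.41, 8.29]{GLeoni2023}. It only applies the result with $k=p=q=2$, $s_0=0$, $s_1=1$ on the convex polytope $\BB_h$. In that case $\|\Delta_zu\|_{L^2(\Omega_z)}\le\min\{2\|u\|_{L^2},|z|\,\|Du\|_{L^2}\}$, integrated against $|z|^{-N-2s}$, already gives the needed bound.
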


The second inequality is Poincar\'{e}'s inequality for fractional Sobolev spaces \cite[Theorem 6.33]{GLeoni2023} . 

\begin{theorem}[Poincar\'{e}'s Inequality]
\label{thm:Poincareinequality}
Let $\Omega\subset \RR^N$ be an open bounded set, let $E\subseteq\Omega$ be a Lebesgue measurable set with positive measure and let $1\leq p<\infty$ and $0<s<1$. Then for all $u\in W^{s,p}(\Omega)$,
\begin{equation}
    \int_{\Omega} |u(x)-u_E|^p \, \dd x\leq \frac{(\text{diam } \Omega)^{N+sp}}{|E|} \int_{\Omega}\int_{\Omega} \frac{|u(x)-u(y)|^p}{|x-y|^{N+sp}} \, \dd x\, \dd y,
\end{equation}
where $u_E$ denotes the mean of the function $u$ on the set $E$ i.e. $u_E=\frac{1}{|E|} \int_{E} u(x) \, \dd x$.
\end{theorem}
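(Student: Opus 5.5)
The statement is Poincaré's inequality for fractional Sobolev spaces. I would prove it directly by averaging, with no compactness argument. The plan is to write $u(x)-u_E$ as an average of differences and then apply Jensen's inequality.

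First, for a.e.\ $x\in\Omega$ I would use
\[
u(x)-u_E=\frac{1}{|E|}\int_E \bigl(u(x)-u(y)\bigr)\,\dd y ,
\]
which holds because $u\in L^p(\Omega)\subset L^1(E)$, so $u_E$ is well defined. Since $p\geq 1$, Jensen's (or Hölder's) inequality with respect to the probability measure $\dd y/|E|$ on $E$ gives
\[
|u(x)-u_E|^p\leq \frac{1}{|E|}\int_E |u(x)-u(y)|^p\,\dd y .
\]

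Next I would insert the kernel. For $x,y\in\Omega$ we have $|x-y|\leq \operatorname{diam}\Omega$, so
\[
1\leq \frac{(\operatorname{diam}\Omega)^{N+sp}}{|x-y|^{N+sp}}.
\]
Multiplying the integrand by this factor yields
\[
|u(x)-u_E|^p\leq \frac{(\operatorname{diam}\Omega)^{N+sp}}{|E|}\int_E \frac{|u(x)-u(y)|^p}{|x-y|^{N+sp}}\,\dd y .
\]

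Finally I would integrate in $x$ over $\Omega$ and enlarge the $y$-domain from $E$ to $\Omega$, which is allowed because the integrand is nonnegative. Tonelli's theorem justifies the order of integration, and this gives exactly the claimed inequality with constant $(\operatorname{diam}\Omega)^{N+sp}/|E|$.

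There is no real obstacle. The only points needing care are measurability and finiteness: $u\in W^{s,p}(\Omega)$ ensures the right-hand side is finite, and $|E|>0$ ensures $u_E$ is well defined. Boundedness of $\Omega$ is needed only for $\operatorname{diam}\Omega<\infty$.
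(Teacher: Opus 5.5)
Your proof is correct and complete: writing $u(x)-u_E$ as an average over $E$, applying Jensen, using $|x-y|\le \mathrm{diam}\,\Omega$, and then integrating in $x$ and enlarging $E$ to $\Omega$ (Tonelli) gives exactly the stated constant. The paper gives no proof of its own and only cites \cite[Theorem 6.33]{GLeoni2023}; the constant $(\mathrm{diam}\,\Omega)^{N+sp}/|E|$ is precisely what this averaging argument produces, so your route is the standard one behind the cited result.
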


\section{Technical lemmas}
\begin{lemma}\label{lem:estimateforlambda} Let  $c\in \left(0,\frac{1}{3}\right)$ and $\lambda_c\in \RR\setminus\{0\}$ be such that \[\Psi_{\lambda_c,c,0}(x)=\Phi_{\lambda_c,c,0}(x)-\lambda_c\left(1+\frac{1}{c^2}\right)^{-\frac{N-2s}{2}}\] satisfies $\|\Psi_{\lambda_c,c,0}\|_{L^{2_s^*}(\BB)}=1$. Then $|\lambda_c|\sim c^{-\frac{N-2s}{2}}$.
\begin{proof}
Using the fact that $\|\Psi_{\lambda_c,c,0}\|_{L^{2_s^*}(\BB)}=1$ we get
\begin{equation*}
\begin{split}
&\int_{\BB} \left[\left(1+\frac{|x|^2}{c^2}\right)^{-\frac{N-2s}{2}}-\left(1+\frac{1}{c^2}\right)^{-\frac{N-2s}{2}}\right]^{\frac{2N}{N-2s}} \, \dd x=\frac{1}{|\lambda_c|^{\frac{2N}{N-2s}}}\end{split}
\end{equation*}
which leads to:
\begin{equation}\label{eq:radial-sim-lambdac}
\begin{split}
&\int_{0}^{1} r^{N-1} \left[\left(1+\frac{r^2}{c^2}\right)^{-\frac{N-2s}{2}}-\left(1+\frac{1}{c^2}\right)^{-\frac{N-2s}{2}}\right]^{\frac{2N}{N-2s}}\, \dd r \sim |\lambda_c|^{-\frac{2N}{N-2s}}.
\end{split}
\end{equation}
Let $v(r)\coloneqq\left(1+\frac{r^2}{c^2}\right)^{-\frac{N-2s}{2}}-\left(1+\frac{1}{c^2}\right)^{-\frac{N-2s}{2}}$. We note that, since $c\in \left(0,\frac{1}{3}\right)$, we obtain for every $r\leq c$:
\[v(r)\geq 2^{-\frac{N-2s}{2}}- 10^{-\frac{N-2s}{2}}>0.\]
Therefore, one can estimate the left-hand side of \eqref{eq:radial-sim-lambdac} as follows:
\begin{equation}
\label{eq:radial-estimate-below}
\int_{0}^{1} r^{N-1} |v(r)|^{\frac{2N}{N-2s}}\dd r \geq \int_0^c r^{N-1} |v(r)|^{\frac{2N}{N-2s}} \dd r \gtrsim \int_0^c r^{N-1}\dd r \sim c^N.
\end{equation}

Since $|v(r)|^{\frac{2N}{N-2s}}<\left(1+\frac{r^2}{c^2}\right)^{-N}$, we also get the upper bound:
\begin{equation}\label{eq:radial-estimate-above}
\begin{aligned}
\int_{0}^{1} r^{N-1}|v(r)|^{\frac{2N}{N-2s}} \, \dd r \lesssim c^N \int_{0}^{\infty} \frac{r^{N-1}}{(1+r^2)^N} \, \dd r \lesssim c^N.
\end{aligned}
\end{equation}
By \eqref{eq:radial-sim-lambdac}, \eqref{eq:radial-estimate-below} and \eqref{eq:radial-estimate-above}, we get that  $|\lambda_c|\sim c^{-\frac{N-2s}{2}}$.
\end{proof}
\end{lemma}

\begin{lemma}[The gradient and the Hessian of the minimizers]\label{lem:GradHessMinimizers}
We have
\begin{equation}
|D\Phi_{\lambda,c,X_0}(x)|=|\lambda| \frac{N-2s}{c^2} \left(1+\frac{|x-X_0|^2}{c^2}\right)^{-\frac{N-2s+2}{2}}|x-X_0|
\end{equation}
and
\begin{equation}
\begin{split}
&|D^2\Phi_{\lambda,c,X_0}(x)|=|\lambda|\frac{N-2s}{c^2}\left(1+\frac{|x-X_0|^2}{c^2}\right)^{-\frac{N-2s+4}{2}}\times\\
&\quad \times\sqrt{\left(1-(N-2s+1)\frac{|x-X_0|^2}{c^2}\right)^2+(N-1)\left(1+\frac{|x-X_0|^2}{c^2}\right)^2}.  
\end{split}
\end{equation}
Moreover,
\begin{equation}\label{second.derivative}
|D^2\Phi_{\lambda,c,X_0}(x)|\lesssim \frac{|\lambda|}{c^2}\left(1+\frac{|x-X_0|^2}{c^2}\right)^{-\frac{N-2s+2}{2}}.
\end{equation}
\end{lemma}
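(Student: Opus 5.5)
The statement to prove is Lemma \ref{lem:GradHessMinimizers}: two explicit formulas for $|D\Phi|$ and $|D^2\Phi|$, followed by the bound \eqref{second.derivative}. The plan is a direct computation.

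\textbf{Reduction.} Translating, I may take $X_0=0$. Write $r=|x|$ and $\Phi(x)=\lambda g(r^2/c^2)$ with $g(t)=(1+t)^{-m}$, where $m=\frac{N-2s}{2}$. Then
\[\partial_i\Phi=\frac{2\lambda}{c^2}g'(t)x_i=-\frac{\lambda(N-2s)}{c^2}(1+t)^{-m-1}x_i,\qquad t=\frac{r^2}{c^2}.\]
Since $m+1=\frac{N-2s+2}{2}$, taking the modulus gives the gradient formula immediately.

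\textbf{Hessian.} Differentiating once more,
\[\partial_{ij}\Phi=-\frac{\lambda(N-2s)}{c^2}(1+t)^{-m-2}\Big[(1+t)\delta_{ij}-2(m+1)\frac{x_ix_j}{c^2}\Big].\]
This matrix has the form $a I+b\,\hat x\hat x^T$ with $\hat x=x/r$. Its eigenvalues are:
\begin{itemize}
\item in the radial direction, $(1+t)-2(m+1)t=1-(N-2s+1)t$, with multiplicity $1$;
\item in the orthogonal directions, $1+t$, with multiplicity $N-1$.
\end{itemize}
The Frobenius norm $|D^2\Phi|=(\sum_{i,j}|\partial_{ij}\Phi|^2)^{1/2}$ is the square root of the sum of the squared eigenvalues of this symmetric matrix. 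Using $(1+t)^{-m-2}=(1+t)^{-\frac{N-2s+4}{2}}$, this yields exactly the stated formula.

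\textbf{The bound \eqref{second.derivative}.} The square root is bounded by $C_{N,s}(1+t)$, because both $|1-(N-2s+1)t|$ and $\sqrt{N-1}\,(1+t)$ are $\lesssim 1+t$. The factor $(1+t)$ cancels one power, giving
\[|D^2\Phi|\lesssim\frac{|\lambda|}{c^2}(1+t)^{-\frac{N-2s+2}{2}}.\]

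There is no real obstacle here. The only point needing care is the eigenvalue decomposition of the rank-one perturbation $aI+b\,\hat x\hat x^T$; this is exactly what makes the norm computation clean and lets me avoid expanding the double sum by hand.
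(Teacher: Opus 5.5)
Your proof is correct and is essentially the paper's argument: the paper writes $\partial_{x_ix_j}\Phi_{\lambda,c,X_0}=\lambda u''(r)\,\hat x_i\hat x_j+\lambda\frac{u'(r)}{r}(\delta_{ij}-\hat x_i\hat x_j)$, which is exactly your spectral decomposition, with radial eigenvalue proportional to $1-(N-2s+1)t$ and tangential eigenvalue proportional to $1+t$ of multiplicity $N-1$. Your parametrization through $t=|x-X_0|^2/c^2$ has the small advantage of never dividing by $|x-X_0|$, so the point $x=X_0$ is covered as well. You also spell out the bound \eqref{second.derivative}, which the paper leaves under ``the conclusion easily follows''.
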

\begin{proof} Let $u(r)\coloneqq\left(1+\frac{r^2}{c^2}\right)^{-\frac{N-2s}{2}}.$
Then
\begin{equation*} 
u'(r)=-\frac{N-2s}{c^2} \left(1+\frac{r^2}{c^2}\right)^{-\frac{N-2s+2}{2}} r
\end{equation*}
and 
\begin{equation*}
u''(r)=-\frac{N-2s}{c^2}\left(1+\frac{r^2}{c^2}\right)^{-\frac{N-2s+4}{2}}\left[1-(N-2s+1)\frac{r^2}{c^2}\right].
\end{equation*}
So the first equality easily follows. For the second one, observe that
\begin{equation*}
\begin{split}
\partial_{x_ix_j}\Phi_{\lambda,c,X_0}(x) & =\lambda\left(u''(|x-X_0|)\frac{(x_i-X_{0,i})(x_j-X_{0,j})}{|x-X_0|^2}\right)\\
    &\quad  + \lambda \left(u'(|x-X_0|)\frac{\delta_{ij}|x-X_0|^2-(x_i-X_{0,i})(x_j-X_{0,j})}{|x-X_0|^3}\right)
\end{split}
\end{equation*}
and the conclusion easily follows. \end{proof}

\begin{lemma}\label{lem:interpolatior-on-polyhedron-Lq-error}
For any  $q\in [1,\infty)$ and $h<<c_h<<1$ the following holds
\begin{equation}
\|\Psi_{\lambda_h,c_h,0}-I_h(\Psi_{\lambda_h,c_h,0})\|_{L^q(\BB_h)}\lesssim h^{2}c_h^{-\left(\frac{N}{2}-\frac{N}{q}+2-s\right)}.
\end{equation}
\end{lemma}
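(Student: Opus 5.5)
The plan is to apply the first estimate of Lemma \ref{lem:interpolating-W2infty-functions} with $k=0$ and $p=q$ on the polyhedron $\BB_h$. Since $\Psi_{\lambda_h,c_h,0}$ differs from $\Phi_{\lambda_h,c_h,0}$ only by a constant, and the interpolant reproduces constants, the interpolation error is the same for both functions. The lemma then gives
\[
\|\Psi_{\lambda_h,c_h,0}-I_h(\Psi_{\lambda_h,c_h,0})\|^q_{L^q(\BB_h)}\lesssim h^{2q}\sum_{T\in\TT_h}|T|\,\|D^2\Phi_{\lambda_h,c_h,0}\|^q_{L^\infty(T)}.
\]
The task is therefore reduced to bounding a Riemann-type sum of the Hessian by an integral.

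The first step is to bound the Hessian pointwise. By \eqref{second.derivative} and \eqref{rel.lambdah},
\[
|D^2\Phi_{\lambda_h,c_h,0}(x)|\lesssim c_h^{-\frac{N-2s}{2}-2}\,g(x), \qquad g(x)=\Bigl(1+\tfrac{|x|^2}{c_h^2}\Bigr)^{-\frac{N-2s+2}{2}}.
\]

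The second step, which is the only delicate point, replaces $\max_T g$ by an average of $g$ over $T$. This is where $h\ll c_h$ is used. For $x,y\in T$ we have $|x-y|\le h\le c_h$. Then $1+|x|^2/c_h^2$ and $1+|y|^2/c_h^2$ are comparable with absolute constants, because
\[
\sqrt{1+|x|^2/c_h^2}\le \sqrt{1+|y|^2/c_h^2}+\frac{|x-y|}{c_h}\le 2\sqrt{1+|y|^2/c_h^2}.
\]
Hence $\max_T g\lesssim \min_T g$, and so $|T|\max_T g^q\lesssim\int_T g^q\,\dd x$. Summing over $T$ gives
\[
\sum_T|T|\,\|D^2\Phi\|^q_{L^\infty(T)}\lesssim c_h^{-q\left(\frac{N-2s}{2}+2\right)}\int_{\RR^N}g^q\,\dd x.
\]

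The third step computes the remaining integral by scaling $x=c_h z$:
\[
\int_{\RR^N}g^q\,\dd x=c_h^N\int_{\RR^N}(1+|z|^2)^{-q\frac{N-2s+2}{2}}\,\dd z.
\]
The last integral is finite whenever $q(N-2s+2)>N$, which holds for all $q\ge1$ since $2-2s>0$. Collecting powers,
\[
\|\Psi_{\lambda_h,c_h,0}-I_h(\Psi_{\lambda_h,c_h,0})\|_{L^q(\BB_h)}\lesssim h^2\,c_h^{-\frac{N-2s}{2}-2+\frac{N}{q}}=h^2\,c_h^{-\left(\frac N2-\frac Nq+2-s\right)},
\]
which is the claim.

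The implicit constants depend on $q$, so strictly the bound is $\lesssim_q$; this is harmless for the fixed exponents $q=2$ and $q=2_s^*$ used in the paper.
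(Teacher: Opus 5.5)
Your proof is correct, and it takes a more unified route than the paper's. The paper splits into two cases.
\begin{itemize}
\item For $q>N/2$ it uses the global estimate $\|u-I_hu\|_{L^q}\lesssim h^2\|D^2u\|_{L^q}$ from Brenner--Scott, which relies on $W^{2,q}\hookrightarrow C^0$.
\item For $q\le N/2$ it applies Lemma \ref{lem:interpolating-W2infty-functions} only on elements with $T\cap B_{2h}(0)=\emptyset$. On those elements $|x|$ itself is comparable across $T$. The elements inside $B_{3h}(0)$ are handled crudely by $L^\infty$ bounds on $\Psi_{\lambda_h,c_h,0}$ and its interpolant. This gives a contribution $h^N c_h^{-\frac{N-2s}{2}q}$, which is absorbed into the target bound using both $q\le N/2$ and $h\le c_h$.
\end{itemize}

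You instead observe that the quantity to compare across an element is the weight $1+|x|^2/c_h^2$, not $|x|$. This weight is uniformly comparable on every element as soon as $h\le c_h$; your triangle inequality for the vector $(1,x/c_h)\in\RR^{N+1}$ is exactly the right way to see it. So the elementwise $W^{2,\infty}$ estimate converts to an integral on all elements at once, with no special treatment of the origin and no case distinction in $q$. The only thing you give up is that the paper's Case I holds with no relation between $h$ and $c_h$, whereas you use $h\lesssim c_h$ for every $q$. Since $h\ll c_h$ is a standing hypothesis of the lemma, this costs nothing. The same argument, with the $k=1$ version of Lemma \ref{lem:interpolating-W2infty-functions}, would also shorten the proof of Lemma \ref{lem:gradient-of-interpolatior-error-on-polyhedron}.
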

\begin{proof} We divide the proof in two steps.\\
\textbf{Case I: $q>\frac{N}{2}$.} In this case we use the classical result in \cite[Theorem 4.4.20]{BrennerScott} to obtain:
\begin{equation}
\label{eq:Lq-error-in-dotW2q}
    \|\Psi_{\lambda_h,c_h,0}-I_h(\Psi_{\lambda_h,c_h,0})\|^q_{L^q(\BB_h)}\lesssim h^{2q} \|D^2\Psi_{\lambda_h,c_h,0}\|^q_{L^q(\BB_h)}.
\end{equation}\\
In order to estimate the right-hand side above we employ estimate \eqref{second.derivative} and obtain:
\begin{equation*}
\label{eq:Lq-norm-of-D2Psi}
\begin{split}
\|D^2\Psi_{\lambda_h,c_h,0}\|^q_{L^q(\BB_h)}& =\|D^2\Phi_{\lambda_h,c_h,0}\|^q_{L^q(\BB_h)}\\
& \lesssim\frac{|\lambda_h|^q}{c^{2q}_h} \int_{\BB_h} \left(1+\frac{|x|^2}{c^2_h}\right)^{-\frac{N-2s+2}{2}q}\, \dd x\\
& \lesssim \frac{|\lambda_h|^q}{c^{2q}_h} \int_{0}^{1} \sigma^{N-1}\left(1+\frac{\sigma^2}{c^2_h}\right)^{-\frac{N-2s+2}{2}q} \, \dd \sigma\\
&\lesssim |\lambda_h|^q c_h^{N-2q},
\end{split}
\end{equation*}
which finishes the proof of this case.

\textbf{Case II: $q\leq \frac{N}{2}$}.
We proceed as in the proof of \cite[Lemma 4.10]{ignat2025optimalconvergenceratesfinite} and define the following partition of the set of mesh triangles $\mathcal{T}_h$:
    \begin{equation*}
    \begin{aligned}
        \TT_h^1&\coloneqq \{T\in \TT_h: T\cap B_{2h}(0)= \emptyset\};\\
        \TT_h^2&\coloneqq \{T\in \TT_h: T\cap  B_{2h}(0)\neq \emptyset  \}.
        \end{aligned}
    \end{equation*}
We note that, since the diameter of the triangles in $\TT_h$ is at most equal to $h$, one has $\TT_h^2\subseteq \TT_h^3 \coloneqq \{T\in \TT_h: T\subset \overline{B_{3h}(0)} \}.$ Therefore, 
\begin{equation*}
\begin{split}
    \|\Psi_{\lambda_h,c_h,0}-I_h&(\Psi_{\lambda_h,c_h,0})\|_{L^q(\BB_h)}^q\\
    &\leq\sum_{T\in \TT_h^1} \int_{T} |\Psi_{\lambda_h,c_h,0}-I_h(\Psi_{\lambda_h,c_h,0})|^q
    +\sum_{T\in \TT_h^3} \int_{T} |\Psi_{\lambda_h,c_h,0}-I_h(\Psi_{\lambda_h,c_h,0})|^q\\
&=: I_1+I_2.
\end{split}
\end{equation*}
Next, we use Lemma \ref{lem:interpolating-W2infty-functions} to write:
 \begin{equation}\label{eq:above-I1-estimate-0}
 I_1\lesssim_q h^{2q}\sum_{T\in \TT_h^1}|T|\,\|D^2 \Psi_{\lambda_h,c_h,0}\|_{L^{\infty}(T)}^{q}= h^{2q}\sum_{T\in \TT_h^1}|T|\,\|D^2 \Phi_{\lambda_h,c_h,0}\|_{L^{\infty}(T)}^{q}.
 \end{equation}
Using estimate \eqref{second.derivative}, we obtain   
      \begin{equation}
      \label{sup-norm-of-D2Phi-on-triangle}
      \begin{split}
      |T|\|D^2 \Phi_{\lambda_h,c_h,0}\|_{L^{\infty}(T)}^{q}& \lesssim  |T|\frac{|\lambda_h|^q}{c^{2q}_h} \sup_{y\in T} \left(1+\frac{|y|^2}{c_h^2}\right)^{-\frac{N-2s+2}{2}q}
      \end{split}\end{equation}

     We aim to return to the $L^q(T)$ setting by proving that the norms of points in a triangle $T\in\TT_h^1$ are comparable with each other. Indeed, for every $x,y\in T\in \TT_h^1$, we have,
      $$|x|\geq 2h\geq 2 h_T\geq 2|x-y|\geq 2|x|-2|y|$$ 
      and, as a result, $2|y|\geq |x|$. It follows that:
      $$\sup_{y\in T} |y|\leq 2\inf_{y\in T} |y|,$$
      which further implies that:
      \[\sup_{y\in T} \left(1+\frac{|y|^2}{c_h^2}\right)^{-\frac{N-2s+2}{2}q}\leq 4^{\frac{N-2s+2}{2}q} \inf_{y\in T} \left(1+\frac{|y|^2}{c_h^2}\right)^{-\frac{N-2s+2}{2}q}\leq C_{N,s,q} \frac{1}{|T|} \int_T \left(1+\frac{|y|^2}{c_h^2}\right)^{-\frac{N-2s+2}{2}q} \, \dd y\]
      where $C_{N,s,q}$ is a positive constant which depends only on $N, s, q$.
      Plugging this inequality into \eqref{eq:above-I1-estimate-0} and \eqref{sup-norm-of-D2Phi-on-triangle}, we obtain that:
\begin{equation}\label{eq:above-I1-estimate-1}
\begin{split}
I_1&\lesssim h^{2q}\frac{|\lambda_h|^q}{c_h^{2q}}\sum_{T\in \TT_h^1}\int_{T} \left(1+\frac{|x|^2}{c_h^2}\right)^{-\frac{N-2s+2}{2}q}\, \dd x\\
&\lesssim h^{2q}\frac{|\lambda_h|^q}{c_h^{2q}} \int_{ \{ |x|\geq 2h \} \cap \BB_h } \left(1+\frac{|x|^2}{c_h^2}\right)^{-\frac{N-2s+2}{2}q}\, \dd x \\
&\lesssim h^{2q} c_h^{\frac{N-2s}{2}q}\int_{0}^{1} \frac{\sigma^{N-1}}{(c_h^2+\sigma^2)^{\frac{N-2s+2}{2}q}}\, \dd\sigma \sim  h^{2q} c_h^{-\left(\frac{Nq}{2}-N+2q-sq\right)}.\end{split}
\end{equation}

For estimating $I_2$ we use the fact that $0\leq \Psi_{\lambda_h,c_h,0}(x)\leq \Phi_{\lambda_h,c_h,0}(x)$ for any $x\in \overline{\BB}$ to obtain:
\begin{equation}\label{eq:estimate-I-2}
    \begin{split}
    I_2 &\lesssim \sum_{T\in \TT_h^3} \left[\int_{T} |\Psi_{\lambda_h,c_h,0}(x)|^q +\int_{T} |I_h(\Psi_{\lambda_h,c_h,0})(x)|^q \, \dd x \right]\\
    &\lesssim  \int_{\overline{B}_{3h}(0)} |\Psi_{\lambda_h,c_h,0}(x)|^q \, \dd x+\int_{\overline{B}_{3h}(0)} |I_h(\Psi_{\lambda_h,c_h,0})(x)|^q \, \dd x\\
    &\lesssim \int_{\overline{B}_{3h}(0)} |\Psi_{\lambda_h,c_h,0}(x)|^q \, \dd x+h^N\|I_h(\Psi_{\lambda_h,c_h,0})\|_{L^{\infty}(\overline{B}_{3h}(0))}^q\\
    &\lesssim \int_{\overline{B}_{3h}(0)} |\Psi_{\lambda_h,c_h,0}(x)|^q \, \dd x+h^N\|\Psi_{\lambda_h,c_h,0}\|_{L^{\infty}(\overline{B}_{3h}(0))}^q\\
    & \lesssim c_h^{\frac{N-2s}{2}q} \int_{0}^{3h}   \frac{\sigma^{N-1}}{(c_h^2+\sigma^2)^{\frac{N-2s}{2}}q}  \, \dd\sigma +   \frac{h^N}{c_h^{\frac{N-2s}{2}q}} \\
    & \lesssim c_h^{\frac{N-2s}{2}q} \int_{0}^{3h} \frac{\sigma^{N-1}}{(c_h^2)^{\frac{N-2s}{2}q}} \, \dd\sigma + \frac{h^N}{c_h^{\frac{N-2s}{2}q}}\\
    & \lesssim \frac{h^N}{c_h^{\frac{N-2s}{2}q}}\lesssim \frac{h^{2q}}{c_h^{\frac{Nq}{2}-N+2q-sq}}.
    \end{split}
\end{equation}
Putting together the results in \eqref{eq:above-I1-estimate-1} and \eqref{eq:estimate-I-2} we obtain the desired result.
\end{proof}

\begin{lemma}
\label{lem:gradient-of-interpolatior-error-on-polyhedron}
For any $p\in [1,\infty)$ and $h<<c_h<<1$  the following holds:
\begin{equation}
\|D\left(\Psi_{\lambda_h,c_h,0}-I_h(\Psi_{\lambda_h,c_h,0})\right)\|_{L^p(\BB_h)}\lesssim hc_h^{- \left(\frac{N}{2}-\frac{N}{p}+2-s\right)}.
\end{equation}
\end{lemma}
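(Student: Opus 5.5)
The plan is to follow the structure of Lemma \ref{lem:interpolatior-on-polyhedron-Lq-error} with one derivative less of gain. We split $\TT_h$ into $\TT_h^1=\{T: T\cap B_{2h}(0)=\emptyset\}$ and $\TT_h^2\subseteq\TT_h^3=\{T: T\subset \overline{B_{3h}(0)}\}$, and write
\[
\|D(\Psi_{\lambda_h,c_h,0}-I_h\Psi_{\lambda_h,c_h,0})\|_{L^p(\BB_h)}^p\le I_1+I_2,
\]
with $I_1$ the sum over $\TT_h^1$ and $I_2$ the sum over $\TT_h^3$. For $p>N$ one could instead invoke the global Brenner--Scott estimate $\|D(u-I_hu)\|_{L^p}\lesssim h\|D^2u\|_{L^p}$ directly. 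However, the splitting works uniformly in $p$, so we use it throughout.

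\textbf{Far region.} For $I_1$ we apply Lemma \ref{lem:interpolating-W2infty-functions} with $k=1$:
\[
I_1\lesssim h^{p}\sum_{T\in\TT_h^1}|T|\,\|D^2\Phi_{\lambda_h,c_h,0}\|_{L^\infty(T)}^p .
\]
As in the previous lemma, $\sup_T|y|\le 2\inf_T|y|$ on $T\in\TT_h^1$. Together with \eqref{second.derivative}, this lets us replace the sup by an average over $T$. Summing, and using $|\lambda_h|\sim c_h^{-\frac{N-2s}{2}}$ from \eqref{rel.lambdah}, we get
\[
I_1\lesssim h^p\frac{|\lambda_h|^p}{c_h^{2p}}\int_{\RR^N}\Big(1+\frac{|x|^2}{c_h^2}\Big)^{-\frac{N-2s+2}{2}p}\dd x\lesssim h^p c_h^{-\frac{N-2s}{2}p-2p+N}.
\]
The integral converges, since $(N-2s+2)p>N$ for $p\ge1$ and $s<1$. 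The resulting exponent is exactly $-p\big(\frac N2-\frac Np+2-s\big)$, which is what we want.

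\textbf{Near region.} For $I_2$ we bound $|D\Psi|^p$ and $|DI_h\Psi|^p$ separately. By Lemma \ref{lem:GradHessMinimizers}, $|D\Phi_{\lambda_h,c_h,0}(x)|\lesssim |\lambda_h|c_h^{-2}|x|$, so
\[
\int_{B_{3h}}|D\Psi|^p\lesssim |\lambda_h|^pc_h^{-2p}h^{p+N}.
\]
For the interpolant, $I_h\Psi$ is affine on $T$, so $DI_h\Psi$ equals a difference quotient of nodal values of $\Psi$ at distance $\sim h$ (mesh regularity). By the mean value theorem, $|DI_h\Psi|\lesssim\|D\Psi\|_{L^\infty(B_{4h})}\lesssim|\lambda_h|c_h^{-2}h$. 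Hence
\[
I_2\lesssim h^N(|\lambda_h|c_h^{-2}h)^p=h^{p+N}c_h^{-\frac{N-2s}{2}p-2p}.
\]
Since $h\ll c_h$, we have $h^{N}\le c_h^{N}$, so $I_2$ is bounded by the same quantity as $I_1$.

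\textbf{Main obstacle.} The delicate step is controlling $DI_h\Psi$ near the origin. We avoid the inverse estimate of Lemma \ref{lem:interpolating-W2infty-functions}, which would lose a factor $h^{-1}$ against $\|\Psi\|_\infty\sim|\lambda_h|$ and only give $h^{N-p}|\lambda_h|^p$, bad for $p\ge N$. Instead we must use the gradient bound vanishing at the origin, as described above. If the difference-quotient argument feels too informal, an alternative is to apply the $k=1$ case of Lemma \ref{lem:interpolating-W2infty-functions} also on $\TT_h^3$. Using $\|D^2\Phi\|_\infty\lesssim|\lambda_h|c_h^{-2}$, this gives $I_2\lesssim h^{p+N}|\lambda_h|^pc_h^{-2p}$ directly, which is the same bound. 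Combining $I_1$ and $I_2$ and taking $p$-th roots yields the statement.
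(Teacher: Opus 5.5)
Your proof is correct. It differs from the paper's in how it handles the range $p>N/2$ and the elements near the origin.

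The paper splits according to $p$:
\begin{itemize}
\item For $p>N/2$ it invokes the global Brenner--Scott estimate $\|D(u-I_hu)\|_{L^p}\lesssim h\|D^2u\|_{L^p}$, which requires $W^{2,p}\hookrightarrow C^0$.
\item For $p\le N/2$ it uses the near/far decomposition. It controls $DI_h\Psi_{\lambda_h,c_h,0}$ on the elements in $\overline{B_{3h}(0)}$ by the inverse estimate of Lemma \ref{lem:interpolating-W2infty-functions}, which gives $h^{N-p}c_h^{-\frac{N-2s}{2}p}$.
\end{itemize}
That near-region bound is dominated by the target exactly when $(h/c_h)^{N-2p}\le 1$, i.e.\ $p\le N/2$. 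So the inverse estimate already fails for $p>N/2$, not only for $p\ge N$ as you state, and this is why the paper switches methods there.

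You instead combine two facts: the $W^{1,\infty}$-stability of the nodal interpolant on shape-regular simplices (equivalently, Lemma \ref{lem:interpolating-W2infty-functions} with $k=1$ on the near elements), and the linear vanishing of $D\Phi_{\lambda_h,c_h,0}$ at the origin. This gives $h^{N+p}c_h^{-\frac{N-2s+4}{2}p}$ for the near region. That is smaller than the target by a factor $(h/c_h)^N$ for every $p$, so you need neither the case distinction nor Brenner--Scott.

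Your route is uniform in $p$ and self-contained, using only elementwise estimates and shape regularity. The paper's route mirrors Lemma \ref{lem:interpolatior-on-polyhedron-Lq-error} with off-the-shelf estimates.

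Your fallback argument shows more. Since $h\lesssim c_h$, for $x,y$ in the same element one has $c_h^2+|x|^2\le 3(c_h^2+|y|^2)$, so the weight $(1+|x|^2/c_h^2)$ varies by a bounded factor on every element. Hence even the near/far split is unnecessary: Lemma \ref{lem:interpolating-W2infty-functions} applied on all of $\TT_h$ together with \eqref{second.derivative} already gives the bound. The same observation would also simplify Lemma \ref{lem:interpolatior-on-polyhedron-Lq-error}.
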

\begin{proof}
We proceed as in the proof of Lemma \ref{lem:interpolatior-on-polyhedron-Lq-error}.
When $p>\frac{N}{2}$ we use \cite[Theorem 4.4.20]{BrennerScott} to get:
\begin{equation*}
    \|D(\Psi_{\lambda_h,c_h,0}-I_h(\Psi_{\lambda_h,c_h,0}))\|_{L^p(\BB_h)}^p\lesssim_p h^p \|D^2\Psi_{\lambda_h,c_h,0}\|^p_{L^{p}(\BB_h)}\lesssim_p h^p c_h^{N-\frac{N-2s+4}{2}p}.
\end{equation*}
For $p\leq \frac{N}{2}$ we obtain:
\begin{equation}\label{eq:L-p-norm-of-the-gradient-of-the-interpolation-error}
\begin{split}
    \int_{\BB_h} |D(\Psi_{\lambda_h,c_h,0}-I_h(\Psi_{\lambda_h,c_h,0}))|^p &\leq \sum_{T\in \TT_h,T\subset \{|x|\geq 2h\}} \int_{T} |D(\Psi_{\lambda_h,c_h,0}-I_h(\Psi_{\lambda_h,c_h,0}))|^p\\
    &\quad +\sum_{T\in \TT_h, T\subset \{|x|\leq 3h\}} \int_{T} |D(\Psi_{\lambda_h,c_h,0}-I_h(\Psi_{\lambda_h,c_h,0}))|^p\\
&=:\tilde{I}_1+\tilde{I}_2.
\end{split}
\end{equation}
Using Lemma \ref{lem:interpolating-W2infty-functions} and Lemma \ref{lem:GradHessMinimizers} we get that
\begin{equation}\label{eq:Estimate-tilde-I-1}
    \tilde{I_1}\lesssim h^p\int_{\BB_h \cap \{|x|\geq 2h\}} |D^2\Psi_{\lambda_h,c_h,0}(x)|^p\, \dd x \lesssim_p \frac{h^p}{c_h^{\frac{Np}{2}-N+2p-sp}}
\end{equation}
and 
\begin{equation}\label{eq:estimate-on-tilde-I-2}
\begin{split}
\tilde{I_2}&\lesssim \sum_{T\in \TT_h, T\subset \{|x|\leq 3h\}} \left(\int_{T} |D\Psi_{\lambda_h,c_h,0}(x)|^p \, \dd x+\int_{T} |D I_h(\Psi_{\lambda_h,c_h,0})(x)|^p \, \dd x \right)\\
&\lesssim \int_{|x|\leq 3h} |D\Psi_{\lambda_h,c_h,0}(x)|^p \, \dd x+h^{N-p} \|\Psi_{\lambda_h,c_h,0}\|_{L^\infty(\{|x|\leq 3h\})}^p\\
&\lesssim c_h^{\frac{N-2s}{2}p}\int_{0}^{3h} \frac{\sigma^{p+N-1}}{(c_h^2+\sigma^2)^{\frac{N-2s+2}{2}p}} \, \dd\sigma+\frac{h^{N-p}}{c_h^{\frac{N-2s}{2}p}}\\
& \lesssim c_h^{\frac{N-2s}{2}p} \int_{0}^{3h} \frac{\sigma^{p+N-1}}{(c_h^2)^{\frac{N-2s+2}{2}p}} \, \dd \sigma + \frac{h^{N-p}}{c_h^{\frac{N-2s}{2}p}}\\
& \lesssim \frac{h^{N+p}}{c_h^{\frac{N-2s+4}{2}p}}+\frac{h^{N-p}}{c_h^{\frac{N-2s}{2}p}}\lesssim \frac{h^p}{c_h^{\frac{Np}{2}-N+2p-sp}}.
\end{split}
\end{equation}
By \eqref{eq:L-p-norm-of-the-gradient-of-the-interpolation-error}, \eqref{eq:Estimate-tilde-I-1} and \eqref{eq:estimate-on-tilde-I-2} we obtain the desired estimate.
\end{proof}

\begin{lemma}
\label{lem:finitecovering}
Let $s\in (0,1)$ and $X_0\in \RR^N$ be fixed. For $N\geq 2$, there exists a positive constant $A_{N,s}$, a finite covering of $\RR^N$ with open sets $\cup_{k\in\mathcal{F}} \Gamma_k=\RR^N$ and a set $\{\xi_k\}_{k\in\mathcal{F}}\subset \mathbb{S}^{N-1}$ such that $$|\xi^T_k D^2\Phi_{\lambda,c,X_0}(x)\xi_k|\geq A_{N,s}\frac{|\lambda|}{c^2} \left(1+\frac{|x-X_0|^2}{c^2}\right)^{-\frac{N-2s+2}{2}}, \quad \forall\ \lambda\in \RR\setminus\{0\}, c>0, x\in \Gamma_k, k\in\mathcal{F}.$$
Moreover, if $N=1$, then $$|D^2\Phi_{\lambda,c,X_0}(x)|\geq A_{s} \frac{|\lambda|}{c^2} \left(1+\frac{|x-X_0|^2}{c^2}\right)^{-\frac{N-2s+2}{2}},$$
if $|x-X_0|\leq \frac{c}{2}$ or if $|x-X_0|\geq c$, where $A_{s}$ is a positive constant which depends only on $s$.
\end{lemma}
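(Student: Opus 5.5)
The plan is to reduce everything to the explicit radial structure of $D^2\Phi_{\lambda,c,X_0}$ computed in Lemma \ref{lem:GradHessMinimizers}. Write $r=|x-X_0|$, $e=(x-X_0)/r$, $\rho=r^2/c^2$, and let $u$ be as in that proof. From the formula for $\partial_{x_ix_j}\Phi_{\lambda,c,X_0}$, the Hessian has eigenvalue $\lambda u''(r)$ in the direction $e$ and eigenvalue $\lambda u'(r)/r$, with multiplicity $N-1$, on $e^\perp$. The tangential eigenvalue is exactly
\[
\frac{\lambda u'(r)}{r}=-\lambda\frac{N-2s}{c^2}\Bigl(1+\rho\Bigr)^{-\frac{N-2s+2}{2}},
\]
which never vanishes and already has the size required on the right-hand side of the lemma.

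\textbf{Case $N\geq 2$.} For a unit vector $\xi$, set $t=\xi\cdot e$ and
\[
q(\rho)=\frac{u''}{u'/r}=\frac{1-(N-2s+1)\rho}{1+\rho}\in\bigl(-(N-2s+1),1\bigr].
\]
Then
\[
\xi^TD^2\Phi_{\lambda,c,X_0}(x)\,\xi=\frac{\lambda u'}{r}\bigl[\,1-t^2(1-q)\,\bigr].
\]
Since $0\le 1-q< N-2s+2$, choosing $t^2\le \delta^2:=\frac{1}{2(N-2s+2)}$ gives $1-t^2(1-q)\ge \tfrac12$, uniformly in $\lambda,c,\rho$. It therefore suffices to find finitely many unit vectors $\xi_k$ such that every direction $e\in\mathbb S^{N-1}$ satisfies $|\xi_k\cdot e|<\delta$ for some $k$.
\begin{itemize}
\item For each $e$, pick some $\xi\perp e$; this is possible because $N\ge2$.
\item The condition $|\xi\cdot e'|<\delta$ holds on an open cap around $e$.
\item Compactness of $\mathbb S^{N-1}$ yields a finite subcover, indexed by $k\in\mathcal F$.
\end{itemize}
Define $\Gamma_k=\{x\ne X_0:\ |\xi_k\cdot(x-X_0)|<\delta|x-X_0|\}$. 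These are open cones, independent of $\lambda$ and $c$, and they cover $\RR^N\setminus\{X_0\}$. At $x=X_0$ the Hessian equals $\lambda u''(0)I=\lambda\frac{-(N-2s)}{c^2}I$, so the bound holds for every $\xi$ there. The point $X_0$ is handled by adjoining it to one of the sets, or by noting it is a null set irrelevant to the later integral use. The constant is $A_{N,s}=\tfrac12(N-2s)$.

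\textbf{Case $N=1$.} Here $|D^2\Phi|=|\lambda u''|=\frac{|\lambda| u'}{r}\,|q(\rho)|$, so one needs $|q|$ bounded below on the stated regions.
\begin{itemize}
\item For $\rho\le\tfrac14$: $q\ge \frac{1-(2-2s)/4}{5/4}=\frac{2+2s}{5}$.
\item For $\rho\ge1$: $q$ is decreasing, and $q\le\frac{2s-1}{2}$ at $\rho=1$, tending to $-(2-2s)$ as $\rho\to\infty$.
\end{itemize}

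\textbf{Main obstacle.} The delicate point is this one-dimensional case. The zero of $u''$ sits at $\rho_*=\frac{1}{2-2s}$, and the claim requires $\rho_*\in(\tfrac14,1)$. That holds exactly when $s<\tfrac12$. For $s\ge\tfrac12$, including $s=\tfrac12$ where $u''$ vanishes at $|x-X_0|=c$, I would have to replace the region $|x-X_0|\ge c$ by $|x-X_0|\ge \kappa_s c$ with $\kappa_s^2>\rho_*$, for instance $\kappa_s^2=2\rho_*$. On that region $|q|\ge \frac{1}{1+2\rho_*}$ is bounded below. I would then check that Steps III and IV only need some fixed multiple of $c$; indeed, they use balls of radius $\sim c_h/(2\sqrt N)$ and the region $|x-X_h|\ge 2Mc_h$, where $M$ can be enlarged. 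With that adjustment, $A_s$ is the minimum of the two lower bounds on $|q|$.
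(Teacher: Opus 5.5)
Your argument is correct, and it takes a genuinely different and much shorter route than the paper. The paper rescales to $X_0=0$, $\lambda=c=1$ and patches together four regions:
\begin{itemize}
\item thin cylinders around the critical sphere $|x|=(N-2s+1)^{-1/2}$, where $u''=0$, handled by near-tangential directions plus a perturbation estimate for $D^2\Phi_{1,1,0}(x_1,x')-D^2\Phi_{1,1,0}(x_1,0')$;
\item the inner ball, where both eigenvalues are negative, so every direction works;
\item the far region $|x|\ge\sqrt{N/(1-s)}$, where positivity of $\Delta\Phi_{1,1,0}$ singles out a coordinate direction;
\item the intermediate annulus, covered by small balls with radial directions $y_n/|y_n|$.
\end{itemize}
You instead notice that the tangential eigenvalue $\lambda u'(r)/r$ never vanishes and already has exactly the required size. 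The identity $\xi^TD^2\Phi_{\lambda,c,X_0}(x)\,\xi=\lambda\frac{u'}{r}\bigl[1-t^2(1-q)\bigr]$, with $1-q=\frac{(N-2s+2)\rho}{1+\rho}\in[0,N-2s+2)$, is just \eqref{eq:xidirection} rewritten. It makes every direction with $|\xi\cdot e|<\delta$ work uniformly in $\rho$, and compactness of $\mathbb{S}^{N-1}$ finishes.

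What your route buys:
\begin{itemize}
\item the explicit constant $A_{N,s}=\frac{N-2s}{2}$;
\item no perturbation computation;
\item covering sets that are cones with apex $X_0$, independent of $\lambda$ and $c$ (the paper's sets move with $c$ through the rescaling), each occupying a fixed fraction of every ball centred at $X_0$.
\end{itemize}
The paper's decomposition gains nothing essential in return. Its one by-product, that all directions work near $X_0$ (used in Step III), also follows from your identity: the bracket is at least $\frac12$ for every $t$ once $(2N-4s+3)\rho\le 1$.

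Three small corrections. First, adjoining the point $X_0$ to a cone does not give an open set. In fact no $c$-independent neighbourhood of $X_0$ can work with a single $\xi$, since the form vanishes at $x=X_0+c(N-2s+1)^{-1/2}\xi$. So cover the apex with $B_{\kappa c}(X_0)$, $\kappa^2=(2N-4s+3)^{-1}$, on which any direction works; this $c$-dependence is unavoidable and matches the paper's rescaled sets. Second, for $N=1$ your diagnosis that the claim needs $s<\frac12$ is right, but that is precisely the standing hypothesis $s<\min\{1,N/2\}$, so no change to Steps III--IV is needed. Note also that at $s=\frac12$ the bubble is constant and its Hessian vanishes identically, not just at $|x-X_0|=c$. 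Third, $A_s$ must include the factor $1-2s$ coming from $|u'/r|$.
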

\begin{proof} The one-dimensional case follows easily using Lemma \ref{lem:GradHessMinimizers}. Let now $N\geq 2$. By a translation and scaling argument, it is sufficient to consider the case $X_0=0$ and $\lambda=c=1$. We keep the notations from the proof of Lemma \ref{lem:GradHessMinimizers}.
We have that $u''(|x|)=0\iff |x|=\frac{1}{\sqrt{N-2s+1}}$ and $\frac{u'(|x|)}{|x|}<0,\quad \forall\ x\neq 0$.

Let $\tilde{x}\in\{x\in\RR^N:|x|=\frac{1}{\sqrt{N-2s+1}}\}$. Without loss of generality assume that $\tilde{x}=(\tilde{x}_1,0')\in \RR\times \RR^{N-1}$ with $\tilde{x}_1=\frac{1}{\sqrt{N-2s+1}}$.

Let $\eps=\frac{1}{100}$ and $0<\delta=\delta_{N,s,\eps}<<1$. We get that $$|u''(|x|)|<\eps \frac{|u'(|x|)|}{|x|}$$ for all $x=(x_1,x')\in \RR\times\RR^{N-1}$ in the cylinder $C_\delta\coloneqq\{x=(x_1,x')\in \RR\times \RR^{N-1}: |x_1-\tilde{x}_1|<\delta,\ |x'-0'|<\delta\}$.

We have the following equalities and inequalities for any point $x=(x_1,0')\in C_\delta$: $$|(\xi_1,0')^T D^2\Phi_{1,1,0} (x)(\xi_1,0')|=|\xi_1|^2 |\partial_{x_1x_1} \Phi_{1,1,0}(x)|<\eps |\xi_1|^2\frac{|u'(|x|)|}{|x|},$$
and $$|(0,\xi')^T D^2\Phi_{1,1,0}(x) (0,\xi')|=|\xi'|^2 \frac{|u'(|x|)|}{|x|}.$$ 
Moreover, for any $x=(x_1,x')\in C_\delta$, the difference $D^2\Phi_{1,1,0}(x_1,x')-D^2\Phi_{1,1,0}(x_1,0')$ satisfies \begin{equation*}
\begin{aligned}
|D^2&\Phi_{1,1,0}(x_1,x')-D^2\Phi_{1,1,0}(x_1,0')|^2\\
&=(N-2s)^2 (N-1)\left[(1+x_1^2)^{-\frac{N-2s+2}{2}}-(1+|x|^2)^{-\frac{N-2s+2}{2}}\right]^2\\
&+(N-2s)^2\Big[(1+x^2_1)^{-\frac{N-2s+4}{2}}(1-x^2_1(N-2s+1))-(1+|x|^2)^{-\frac{N-2s+4}{2}}(1-|x|^2(N-2s+1))\Big]^2\\
&+2 (N-2s)^2 (N-2s+2)^2 (1+x^2_1)^{-\frac{N-2s+4}{2}} (1+|x|^2)^{-\frac{N-2s+4}{2}}x^2_1 |x'|^2.
\end{aligned}
\end{equation*}
We will show that 
\begin{equation}
\label{eq:finitecovering3}
(N-1)\left[(1+x_1^2)^{-\frac{N-2s+2}{2}}-(1+|x|^2)^{-\frac{N-2s+2}{2}}\right]^2\leq \eps^2 (1+|x|^2)^{-(N-2s+2)}.
\end{equation}
In a similar way one can prove that
\begin{equation*}
\begin{aligned}
\Big[(1+x^2_1)^{-\frac{N-2s+4}{2}}\left(1-x^2_1(N-2s+1)\right)-&(1+|x|^2)^{-\frac{N-2s+4}{2}}\left(1-|x|^2(N-2s+1)\right)\Big]^2\\
&\leq \eps^2 (1+|x|^2)^{-(N-2s+2)}
\end{aligned}
\end{equation*}
and 
$$(N-2s+2)^2 (1+x^2_1)^{-\frac{N-2s+4}{2}} (1+|x|^2)^{-\frac{N-2s+4}{2}}x^2_1 |x'|^2\leq \eps^2 (1+|x|^2)^{-(N-2s+2)}.$$
These will lead to:
$$|D^2\Phi_{1,1,0}(x_1,x')-D^2\Phi_{1,1,0}(x_1,0')|\leq 2\eps (N-2s) (1+|x|^2)^{-\frac{N-2s+2}{2}}=2\eps\frac{|u'(|x|)|}{|x|}.$$
Inequality \eqref{eq:finitecovering3} is equivalent with proving that
$$\left(1+\frac{|x'|^2}{1+x^2_1}\right)^{\frac{N-2s+2}{2}}-1\leq \frac{\eps}{\sqrt{N-1}}.$$
Since $$\frac{|x'|^2}{1+x^2_1}\leq \frac{\delta^2}{1+(\tilde{x}_1-\delta)^2}<1,$$ we get that for $\delta<<1$:
\begin{equation*}
\begin{aligned}
\left(1+\frac{|x'|^2}{1+x^2_1}\right)^{\frac{N-2s+2}{2}}-1&\leq \left(1+\frac{|x'|^2}{1+x^2_1}\right)^{N+2}-1\\
&\leq 2^{N+2} \frac{|x'|^2}{1+x^2_1}\\
&\leq 2^{N+2}\frac{\delta^2}{1+(\tilde{x}_1-\delta)^2}\\
&\leq \frac{\eps}{\sqrt{N-1}}.
\end{aligned}
\end{equation*}

We obtain, for any $x=(x_1,x')\in C_\delta$:
\begin{equation*}
    \begin{aligned}
        |(\xi_1,0')^T & D^2\Phi_{1,1,0}(x_1,x')(\xi_1,0')|\\
        &\leq|(\xi_1,0')^T [D^2\Phi_{1,1,0}(x_1,x')-D^2\Phi_{1,1,0}(x_1,0')](\xi_1,0')| + |(\xi_1,0')^T D^2\Phi_{1,1,0}(x_1,0')(\xi_1,0')|\\
        &\leq |\xi_1|^2 |D^2\Phi_{1,1,0}(x_1,x')-D^2\Phi_{1,1,0}(x_1,0')|+ \eps |\xi_1|^2\frac{|u'(|x|)|}{|x|}\\
        &\leq 3\eps |\xi_1|^2\frac{|u'(|x|)|}{|x|}.
    \end{aligned}
\end{equation*}
Thus,
\begin{equation}
\label{eq:finitecovering1}
    |(\xi_1,0')^T D^2\Phi_{1,1,0}(x_1,x')(\xi_1,0')|\leq 3\eps |\xi_1|^2\frac{|u'(|x|)|}{|x|}, \quad\text{for all } x=(x_1,x')\in C_\delta.
\end{equation}
In a similar manner one can prove that 
\begin{equation}
\label{eq:finitecovering2}
    |(0,\xi')^T D^2\Phi_{1,1,0}(x_1,x')(0,\xi')|\geq (1-2\eps)|\xi'|^2\frac{|u'(|x|)|}{|x|}, \quad\text{for all } x=(x_1,x')\in C_\delta.
\end{equation}

Writing $\xi=(\xi_1,0')+(0,\xi')$ we obtain
\begin{equation*}
\begin{aligned}
    |\xi^T D^2\Phi_{1,1,0}(x_1,x')\xi|\geq& |(0,\xi')^T D^2\Phi_{1,1,0}(x_1,x')(0,\xi')|-|(\xi_1,0')^T D^2\Phi_{1,1,0}(x_1,x')(\xi_1,0')|\\
    &-|(\xi_1,0')^T D^2\Phi_{1,1,0}(x_1,x')(0,\xi')+(0,\xi')^T D^2\Phi_{1,1,0}(x_1,x')(\xi_1,0')|\\
    =&|(0,\xi')^T D^2\Phi_{1,1,0}(x_1,x')(0,\xi')|-|(\xi_1,0')^T D^2\Phi_{1,1,0}(x_1,x')(\xi_1,0')|\\
    &-2|\xi_1| \left|\sum_{j=2}^{N} \xi_j \partial_{x_1x_j} [\Phi_{1,1,0}(x_1,x')]\right|. 
    \end{aligned}
\end{equation*}
Since $$\left|\sum_{j=2}^{N} \xi_j \partial_{x_1x_j} [\Phi_{1,1,0}(x_1,x')]\right|\leq \frac{(N-2s)(N-2s+2)}{2} (1+|x|^2)^{-\frac{N-2s+4}{2}} |x|^2 |\xi'|,$$ using inequalities \eqref{eq:finitecovering1} and \eqref{eq:finitecovering2}, we further get for any $x=(x_1,x')\in C_\delta$:
\begin{equation*}
\begin{aligned}
    |\xi^T D^2&\Phi_{1,1,0}(x_1,x')\xi|\\
    &\geq (1-2\eps)|\xi'|^2\frac{|u'(|x|)|}{|x|}-3\eps |\xi_1|^2\frac{|u'(|x|)|}{|x|}-(N-2s)(N-2s+2) (1+|x|^2)^{-\frac{N-2s+4}{2}} |x|^2 |\xi_1| |\xi'|\\
    &=(N-2s)(1+|x|^2)^{-\frac{N-2s+2}{2}} \left[(1-2\eps)|\xi'|^2-3\eps|\xi_1|^2-(N-2s+2)\frac{|x|^2}{1+|x|^2} |\xi_1| |\xi'|\right]\ .
\end{aligned}
\end{equation*}

We obtain that there exists a positive constant $\beta=\beta_{N,s,\eps}$ such that for all $\xi\in\{\tilde{\xi}\in\mathbb{S}^{N-1}: |\tilde{\xi}_1|<\beta |\tilde{\xi}'|\}$ it holds that $$|\xi^T D^2\Phi_{1,1,0}(x_1,x')\xi|>A_{N,s,\eps} \frac{|u'(|x|)|}{|x|},\quad\ \text{for all } x=(x_1,x')\in C_{\delta},$$ where $A_{N,s,\eps}$ is a positive constant.

Covering the set $\{x\in\RR^N:|x|=\frac{1}{\sqrt{N-2s+1}}\}$ with a finite number of cylinders we obtain the desired property.

If the point $x\in \RR^N$ is not in some cylinder, there exist three cases: if $|x|<\frac{1}{\sqrt{N-2s+1}}-\delta$, then, since 
\begin{equation}
\label{eq:xidirection}
\xi^T D^2\Phi_{1,1,0}(x) \xi=-(N-2s) (1+|x|^2)^{-\frac{N-2s+4}{2}}\left\{1+|x|^2-(N-2s+2)\left(\sum_{j=1}^{N} x_j\xi_j\right)^2\right\},
\end{equation}
for any $\xi,x\in\RR^N$ and $$\left(\sum_{j=1}^{N} x_j\xi_j\right)^2\leq |x|^2 |\xi|^2,$$ we get that there exists a constant $\tilde{A}_{N,s}>0$ such that 
\begin{equation*}
|\xi^T D^2\Phi_{1,1,0}(x) \xi|\geq \tilde{A}_{N,s} (1+|x|^2)^{-\frac{N-2s+2}{2}},
\end{equation*}
for any $\xi\in\mathbb{S}^{N-1}$ and any $x\in \RR^N$ with $|x|<\frac{1}{\sqrt{N-2s+1}}-\delta$.

If $|x|\geq \sqrt{\frac{N}{1-s}}$, then, since 
\begin{equation*}
\Delta\Phi_{1,1,0}(x)=-(N-2s) (1+|x|^2)^{-\frac{N-2s+4}{2}} \left[N-2(1-s)|x|^2\right],
\end{equation*}
for any $x\in\RR^N$, we get that there exists a positive constant $\overline{A}_{N,s}$ such that 
\begin{equation*}
\Delta\Phi_{1,1,0}(x)\geq \overline{A}_{N,s} (1+|x|^2)^{-\frac{N-2s+2}{2}},
\end{equation*}
for any $x\in\RR^N$ with  $|x|\geq \sqrt{\frac{N}{1-s}}$. Thus, for any such point $x$, there exists $1\leq j=j(x)\leq N$ such that 
$$|e_j^T D^2\Phi_{1,1,0}(x) e_j|=|\partial_{x_jx_j} \Phi_{1,1,0}(x)|\geq \frac{\overline{A}_{N,s}}{N} (1+|x|^2)^{-\frac{N-2s+2}{2}}.$$

We cover $\left\{x\in\RR^N: |x|\geq\sqrt{\frac{N}{1-s}}\right\}$ with the sets $$\Gamma_j=\left\{x\in\RR^N: |x|\geq\sqrt{\frac{N}{1-s}},\  |\partial_{x_jx_j} \Phi_{1,1,0}(x)|\geq \frac{\overline{A}_{N,s}}{N}(1+|x|^2)^{-\frac{N-2s+2}{2}}\right\},$$
for $1\leq j\leq N$.

If $x$ is in the annulus $B_{N,s,\delta}\coloneqq \{x\in \RR^N: \frac{1}{\sqrt{N-2s+1}}+\delta<|x|<\sqrt{\frac{N}{1-s}}\}$, we fix $0<\eta=\eta_{N,s,\delta}<<1$. Since $B_{N,s,\delta}\subset\RR^N$ is bounded, it is totally bounded. Thus, there exists a finite number of balls $B_n\coloneqq B_{\eta}(y_n)$ with $y_n\in B_{N,s,\delta}$ that cover it. 

Let $\xi_n=\frac{y_n}{|y_n|}\in\mathbb{S}^{N-1}$. We will show that there exists a constant $\tilde{C}_{N,s}>0$ such that $$|\xi^T_n D^2\Phi_{1,1,0}(x) \xi_n|\geq \tilde{C}_{N,s} (1+|x|^2)^{-\frac{N-2s+2}{2}},$$ for any $x\in B_n$. Using \eqref{eq:xidirection} it is enough to prove that, for any $x\in B_n$,
$$(N-2s+2)\left(\sum_{j=1}^{N}x_j\xi_j\right)^2-(1+|x|^2)\geq C_{N,s},$$
for some constant $C_{N,s}>0$.

Since
  \[
 \langle x, \xi_n\rangle \geq \langle y_n, \xi_n\rangle -\eta |\xi_n|=|y_n|-\eta>0
 \]
 and $|x|<|y_n|+\eta$
we get for small enough $\eta$ that
\begin{align*}
  (N-2s+2)\langle &x,\xi_n\rangle ^2-(1+|x|^2)\geq  (N-2s+2)(|y_n|-\eta)^2-1-|x|^2\\
  &= (N-2s+2)|y_n|^2-O(\eta) -1-|x|^2\\
  &=(N-2s+1)|y_n|^2 -1 -O(\eta) \\
  &\geq (N-2s+1)\Big(\frac 1{\sqrt{N-2s+1}}+\delta\Big)^2-1-O(\eta)\\
  &\geq C_{\delta,N,s}.
\end{align*}

Choosing $A_{N,s}=\min\{A_{N,s,\eps}(N-2s),\tilde{A}_{N,s},\frac{\overline{A}_{N,s}}{N},\tilde{C}_{N,s}\}$ we obtain the desired property.\end{proof}


\begin{remark}
Since we cover $\RR^N$ with a finite number of sets, we get that, for any bounded domain $\Omega\subset\RR^N$,
\begin{equation*}
\max_{\xi\in\mathbb{S}^{N-1}} \min_{x\in \Omega} |\xi^T D^2\Phi_{\lambda,c,X_0}(x)\xi|\geq A_{N,s} \frac{|\lambda|}{c^2} \min_{x\in\overline{\Omega}}\left(1+\frac{|x-X_0|^2}{c^2}\right)^{-\frac{N-2s+2}{2}}.
\end{equation*}
\end{remark}
\begin{lemma}
\label{lem:interpolationoncubes}
For any cube $Q\subset \RR^N$ with side-length $l$ and any $1\leq p<\infty$ there exists a constant $c_{N,p}>0$ such that the following inequality holds for any function $u\in L^p(Q)\cap \dot{W}^{2,p}(Q)$:
\begin{equation}
\|Du\|_{L^p(Q)}\leq \frac{c_{N,p}}{l} \|u\|_{L^p(Q)}+c_{N,p}\|u\|^{\frac{1}{2}}_{L^p(Q)} \|D^2 u\|^{\frac{1}{2}}_{L^p(Q)}.
\end{equation}
\end{lemma}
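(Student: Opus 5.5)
The plan is to reduce, by scaling, to the unit cube $Q_1=(0,1)^N$, and there to combine a standard Gagliardo--Nirenberg type inequality for bounded Lipschitz domains with an optimization over a rescaling parameter.

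\textbf{Step 1: the unit-cube estimate.} On $Q_1$ we use the elementary inequality
\[
\|Dv\|_{L^p(Q_1)}\le C_{N,p}\big(\|v\|_{L^p(Q_1)}+\|D^2v\|_{L^p(Q_1)}\big),
\]
valid for all $v\in W^{2,p}(Q_1)$ and all $1\le p<\infty$, including $p=1$. It follows from a one-dimensional argument: on each segment $I$ of length $1$ parallel to $e_i$, the mean value theorem applied to $v$ gives a point where $|\partial_i v|\le 2\int_I|v|$. Integrating $\partial_{ii}v$ from that point then gives $\sup_I|\partial_i v|\lesssim\int_I|v|+\int_I|\partial_{ii}v|$. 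Raising to the power $p$, using H\"older on $I$ and integrating over the transversal variables gives the bound for $\partial_i v$; summing over $i$ gives the inequality.

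\textbf{Step 2: a scaled family.} To obtain the multiplicative form we need a version of Step 1 with a free small parameter. Subdivide $Q$ into $m^N$ congruent subcubes of side $t=l/m$, for an integer $m\ge 1$. Applying Step 1 to each subcube rescaled to unit size gives
\[
\|Du\|_{L^p(q)}\le C\big(t^{-1}\|u\|_{L^p(q)}+t\|D^2u\|_{L^p(q)}\big)
\]
for each subcube $q$. Raising to the power $p$ and summing over the subcubes (using $(a+b)^p\le 2^{p-1}(a^p+b^p)$) yields
\[
\|Du\|_{L^p(Q)}\le C'\big(t^{-1}\|u\|_{L^p(Q)}+t\|D^2u\|_{L^p(Q)}\big)\qquad\text{for every } t=l/m,\ m\in\mathbb N.
\]

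\textbf{Step 3: optimization.} Set $A=\|u\|_{L^p(Q)}$ and $B=\|D^2u\|_{L^p(Q)}$, and let $t^*=\sqrt{A/B}$ (the case $B=0$ is covered by the first case below). If $t^*\ge l$, take $m=1$. Then $t^{-1}A+tB=A/l+lB\le A/l+t^*B=A/l+\sqrt{AB}$. If $t^*<l$, choose $m$ with $t=l/m\in(t^*/2,t^*]$, which is possible since the admissible values $l/m$ decrease by factors of at most $2$. Then $t^{-1}A+tB\le 2A/t^*+t^*B=3\sqrt{AB}$. In both cases $\|Du\|_{L^p(Q)}\le c_{N,p}\big(l^{-1}A+\sqrt{AB}\big)$, which is the claim.

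\textbf{Main obstacle.} The delicate point is Step 1, specifically that it holds for $p=1$ and for general $W^{2,p}$ functions rather than smooth ones. Both are handled by density of $C^\infty(\overline{Q_1})$ in $W^{2,p}(Q_1)$, since all the terms in the inequality are continuous in the $W^{2,p}$ norm. The assumption $u\in L^p\cap\dot W^{2,p}$ already implies $u\in W^{2,p}(Q)$ by the same interpolation, so no extra hypothesis is needed.
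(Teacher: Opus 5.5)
Your proof is correct. It takes a different route from the paper, which does not prove the inequality at all: it quotes \cite[Theorem 13.51]{GLeoni2017}, a Gagliardo--Nirenberg inequality on cubes, and only combines the resulting bounds for the individual partial derivatives through $\sum_j a_j^p\sim_{n,p}(\sum_j a_j)^p$. Inequalities of that type are usually derived in two stages. One first proves the multiplicative estimate on intervals of length $l$ and then slices, recombining the product $\|u\|^{1/2}_{L^p}\|\partial_{ii}u\|^{1/2}_{L^p}$ by Fubini and Cauchy--Schwarz in the transversal variables. You slice only to get the additive estimate at unit scale, and then carry out the subdivision into $m^N$ subcubes and the optimization over $t=l/m$ directly in $\RR^N$. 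Your version is elementary and self-contained, shows that the constant depends only on $N$ and $p$, and covers $p=1$ with no extra work. The citation is shorter and comes with mixed exponents and componentwise control ($\partial_i u$ by $\partial_{ii}u$), which the paper does not need.

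Two small repairs are needed.

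First, the constant $2$ in ``$|\partial_i v|\le 2\int_I|v|$'' is false: for $v=x_i-\tfrac12$ one has $|\partial_i v|\equiv 1=4\int_I|v|$. Choose $a\in[0,\tfrac13]$ and $b\in[\tfrac23,1]$ with $|v(a)|\le 3\int_0^{1/3}|v|$ and $|v(b)|\le 3\int_{2/3}^1|v|$, and apply the mean value theorem on $[a,b]$. This gives a point where $|\partial_i v|\le 9\int_I|v|$, which is all you use.

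Second, deducing $u\in W^{2,p}(Q)$ from $u\in L^p\cap\dot W^{2,p}$ ``by the same interpolation'' is circular as stated, since Step 1 is proved only for $W^{2,p}$ functions. Instead, apply the inequality to the mollifications $u*\rho_\eps$ on interior cubes $Q_\delta\Subset Q$. These are smooth on $\overline{Q_\delta}$ and satisfy $\|u*\rho_\eps\|_{L^p(Q_\delta)}\le A$ and $\|D^2(u*\rho_\eps)\|_{L^p(Q_\delta)}\le B$. Then let $\eps\to0$ using Fatou, and $\delta\to0$ using monotone convergence. This yields both $Du\in L^p(Q)$ and the inequality, and it only needs Step 1 for functions smooth up to the boundary. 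In the paper's applications $u=u_h-\Phi_{\lambda_h,c_h,X_h}$ is smooth on $\overline{Q}$, so this point does not matter there.
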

\begin{proof}
The proof uses \cite[Theorem 13.51]{GLeoni2017} and the fact that, for any real number $p>0$ and any nonnegative real numbers $a_j$, $1\leq j\leq n$, $\sum_{j=1}^{n} a^p_j\sim_{n,p} (\sum_{j=1}^{n} a_j)^p$.
\end{proof}
\subsection*{Acknowledgements}
We would like to thank Drago\c{s} Manea for various discussions when this project started. A. Dima was partially supported by a scholarship of the SCOSAAR. 
L. I.  Ignat  was partially supported by a grant of the Ministry of Research, Innovation, and Digitization, CCCDI -
UEFISCDI, project number ROSUA-2024-0001, within PNCDI IV. 

\bibliographystyle{abbrv}
\bibliography{biblio}

\begin{thebibliography}{10}

\bibitem{acosta2017fractional}
G.~Acosta and J.~P. Borthagaray.
\newblock A fractional laplace equation: regularity of solutions and finite
  element approximations.
\newblock {\em SIAM Journal on Numerical Analysis}, 55(2):472--495, 2017.

\bibitem{pratelli}
P.~Antonietti and A.~Pratelli.
\newblock Finite element approximation of the sobolev constant.
\newblock {\em Numerische Mathematik}, 117:37--64, 12 2011.

\bibitem{MR0448404}
T.~Aubin.
\newblock Probl\`emes isop\'{e}rim\'{e}triques et espaces de {S}obolev.
\newblock {\em J. Differential Geometry}, 11(4):573--598, 1976.

\bibitem{MR1124290}
G.~Bianchi and H.~Egnell.
\newblock A note on the {S}obolev inequality.
\newblock {\em J. Funct. Anal.}, 100(1):18--24, 1991.

\bibitem{BrennerScott}
S.~C. Brenner and L.~R. Scott.
\newblock {\em The Mathematical Theory of Finite Element Methods}.
\newblock Springer, 2008.

\bibitem{glowinski}
A.~Caboussat, R.~Glowinski, and A.~Leonard.
\newblock Looking for the best constant in a sobolev inequality: A numerical
  approach.
\newblock {\em Calcolo}, 47:211--238, 12 2010.

\bibitem{Frank2013}
S.~Chen, R.~L. Frank, and T.~Weth.
\newblock Remainder terms in the fractional {S}obolev inequality.
\newblock {\em Indiana Univ. Math. J.}, 62(4):1381--1397, 2013.

\bibitem{DiNezzaPalatucci2012}
E.~Di~Nezza, G.~Palatucci, and E.~Valdinoci.
\newblock Hitchhiker's guide to the fractional sobolev spaces.
\newblock {\em Bull. Sci. Math.}, 136(5):521--573, 2012.

\bibitem{MR4484209}
A.~Figalli and Y.~R.-Y. Zhang.
\newblock Sharp gradient stability for the {S}obolev inequality.
\newblock {\em Duke Math. J.}, 171(12):2407--2459, 2022.

\bibitem{ignat2025optimalconvergenceratesfinite}
L.~I. Ignat and E.~Zuazua.
\newblock Optimal convergence rates for the finite element approximation of the
  sobolev constant.
\newblock {\em arXiv preprint arXiv:2504.09637}, 2025.

\bibitem{MR4623703}
T.~K\"onig.
\newblock On the sharp constant in the {B}ianchi-{E}gnell stability inequality.
\newblock {\em Bull. Lond. Math. Soc.}, 55(4):2070--2075, 2023.

\bibitem{MR4741541}
T.~K\"onig.
\newblock An exceptional property of the one-dimensional {B}ianchi-{E}gnell
  inequality.
\newblock {\em Calc. Var. Partial Differential Equations}, 63(5):Paper No. 123,
  21, 2024.

\bibitem{konig2023stabilitysobolevinequalityexistence}
T.~K{\"o}nig.
\newblock Stability for the sobolev inequality: Existence of a minimizer.
\newblock {\em Journal of the European Mathematical Society}, 2025.

\bibitem{GLeoni2017}
G.~Leoni.
\newblock {\em A First Course in Sobolev Spaces: Second Edition}.
\newblock American Mathematical Society, 2017.

\bibitem{GLeoni2023}
G.~Leoni.
\newblock {\em A First Course in Fractional Sobolev Spaces}.
\newblock American Mathematical Society, 2023.

\bibitem{MR717827}
E.~H. Lieb.
\newblock Sharp constants in the {H}ardy-{L}ittlewood-{S}obolev and related
  inequalities.
\newblock {\em Ann. of Math. (2)}, 118(2):349--374, 1983.

\bibitem{JVSchaftingen2023}
J.~V. Schaftingen.
\newblock Fractional gagliardo{\textendash}nirenberg interpolation inequality
  and bounded mean oscillation.
\newblock {\em Comptes Rendus. Math\'ematique}, 361:1041--1049, 2023.

\bibitem{MR0463908}
G.~Talenti.
\newblock Best constant in {S}obolev inequality.
\newblock {\em Ann. Mat. Pura Appl. (4)}, 110:353--372, 1976.

\end{thebibliography}
\end{document}